\newtheorem{proposition}{Proposition}[section]
\newtheorem{theorem}[proposition]{Theorem}
\newtheorem{corollary}[proposition]{Corollary}
\newtheorem{lemma}[proposition]{Lemma}
\newtheorem{remark}[proposition]{Remark}
\numberwithin{equation}{section}
\newcommand{\nc}{\newcommand}
\nc{\R}{{\mathbb R}}
\nc{\bS}{{\mathbb S}^{d-1}}
\nc{\N}{{\mathbb N}}
\nc{\Z}{{\mathbb Z}}
\nc{\BP}{\mathbb{P}}
\nc{\BE}{\mathbb{E}}
\nc{\BQ}{\mathbb{Q}}
\nc{\BM}{\mathbb{M}}
\nc{\bN}{{\mathbf N}}
\nc{\BX}{{\mathbb X}}
\nc{\BY}{{\mathbb Y}}
\nc{\cH}{{\mathcal H}}
\nc{\cB}{{\mathcal B}}
\nc{\cX}{{\mathcal X}}
\nc{\cN}{{\mathcal N}}
\nc{\cY}{{\mathcal Y}}
\nc{\dint}{{\rm d}}
\def\keywords{\xdef\@thefnmark{}\@footnotetext}
\title{Criteria for Poisson process convergence with applications to inhomogeneous Poisson-Voronoi tessellations}
\author{

Federico Pianoforte\footnotemark[1] \, and \,  Matthias Schulte\footnotemark[2] }
\begin{document}
\footnotetext[1]{federico.pianoforte@stat.unibe.ch;  University of Bern, Institute of Mathematical Statistics and Actuarial Science.}
\footnotetext[2]{matthias.schulte@tuhh.de;  Hamburg University of Technology,  Institute of Mathematics.}
\maketitle

\begin{abstract}
This article employs the relation between probabilities of two consecutive values of a Poisson random variable to derive conditions for the weak convergence of point processes to a Poisson  process. As applications, we consider the starting points of $k$-runs in a sequence of Bernoulli random variables and point processes constructed using inradii and circumscribed radii of inhomogeneous Poisson-Voronoi tessellations.
\end{abstract}
\keywords{ MSC2010 subject classifications. Primary 60F05; secondary 60F17, 60D05, 60G70, 60G55.}
\keywords{ \emph{Keywords:} Poisson process convergence, stochastic geometry,  inhomogeneous Voronoi tessellations, runs, extremes,  local dependence.}

\keywords{The research was supported by the Swiss National Science Foundation Grant No 200021\_175584}

\section{Introduction and main results}

Let $X$ be a random variable taking values in $\N_0=\N\cup\{0\}$ and let $\lambda>0$. It is well-known that 
\begin{equation}\label{eqm:characterizationPoisson}
k\mathbb{P}(X=k)= \lambda \mathbb{P}(X=k-1), \quad k\in\N,
\end{equation}
if and only if $X$ follows a Poisson distribution with parameter $\lambda$. We use this observation to establish weak convergence to a Poisson  process. Indeed, we will prove that a tight sequence of point processes $\xi_n$, $n\in \N$,  satisfies
$$
\lim_{n\rightarrow\infty} k\mathbb{P}(\xi_n(B)=k) - \lambda(B) \mathbb{P}(\xi_n(B)=k-1) =0, \quad k\in \N,
$$
for any $B$ in a certain family of sets and some locally finite measure $\lambda$, if and only if $\xi_n$ converges in distribution to a Poisson  process with intensity measure $\lambda$.  Many different methods to investigate Poisson  process convergence are available in the literature; we refer to surveys and classical results \cite{MR3642325,MR2933280, MR3992498}.
Using Stein's method, one can even derive quantitative bounds for the Poisson process approximation; see e.g.\ \cite{arratia1990,MR974580,MR1190904,MR1163825,ChenXia2004,MR2427851,MR3502603,otto2020poisson,MR2543874,Xia2005} and the references therein. In contrast to these results, our findings are purely qualitative and do not provide rates of convergence, but they have the advantage that the underlying conditions are easy to verify. This is demonstrated in Sections \ref{inradius} and \ref{circ-rad-proof}, where weak convergence of point processes constructed using inradii and circumscribed radii of inhomogeneous Poisson-Voronoi tessellations is established.

The proof of our abstract criterion for Poisson process convergence relies on characterizations of point process convergence from \cite{MR1876169,MR3642325} and the characterizing equation \eqref{eqm:characterizationPoisson} for the Poisson distribution.

Let us now give a precise formulation of our results. Let $S$ be a locally compact second countable Hausdorff space (lcscH space) with Borel $\sigma$-field $\mathcal{S}$. A non-empty class $\mathcal{U}$ of subsets of $S$ is called a ring if it is closed under finite unions and intersections, as well as under proper differences. Let $\widehat{\mathcal{S}}$ denote  the class of relatively compact sets of $S$.  We say that a measure $\lambda$ on $S$ is non-atomic if $\lambda(\{x\})=0$ for all $x\in S$, and we define
$$
\widehat{\mathcal{S}}_{\lambda} 
= \{ B\in\widehat{\mathcal{S}}\, : \, \lambda(\partial B) =0\} ,
$$
where $\partial B$ indicates the boundary of $B$. 

Let $\mathcal{M}(S)$ be the space of all locally finite measures on $S$, endowed with the vague topology induced by the mappings $\pi_f: \mu\mapsto \mu(f)=\int fd\mu$, $f\in C_K^+(S)$, where $ C_K^+(S)$ denotes the set of non-negative and continuous functions with compact support. Note that $\mathcal{M}(S)$ is a Polish space (see e.g.\ \cite[Theorem A2.3]{MR1876169}). Let $\mathcal{N}(S)\subset \mathcal{M}(S)$ denote the set of all locally finite counting measures.  
A random measure $\xi$ on $S$ is a random element in $\mathcal{M}(S)$ measurable with respect to the $\sigma$-field generated by the vague topology, and it is a point process if it takes values in $\mathcal{N}(S)$.

Our first main result provides a characterization of weak convergence to a Poisson  process.  
\begin{theorem}\label{TH}
	Let $\xi_n$, $n\in\mathbb{N}$, be a sequence of point processes, and let $\lambda$ be a non-atomic locally finite measure on $S$. Let  $\mathcal{U}\subseteq\widehat{\mathcal{S}}_{\lambda}$  be a  ring containing a countable topological basis of $S$. Then the following statements are equivalent:
	\begin{itemize}
		\item [(i)] For all open sets $B\in\mathcal{U}$ and $k\in\N$, $\xi_n(B)$, $n\in\N$, is tight and
		\begin{align}\label{eq:consecProbLim}
	    \lim_{n\rightarrow\infty}	k \mathbb{P}(\xi_n(B) =k) - \lambda(B) \mathbb{P}(\xi_n(B) = k-1)=0.
		\end{align}
		\item [(ii)] $\xi_n$, $n\in\N$, converges in distribution to a Poisson  process with intensity measure $\lambda$.
	\end{itemize}
\end{theorem}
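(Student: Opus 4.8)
I would prove the two implications separately; (ii)$\Rightarrow$(i) is the short one. If $\xi_n$ converges in distribution to a Poisson process $\xi$ with intensity $\lambda$, then for every open $B\in\mathcal U\subseteq\widehat{\mathcal S}_\lambda$ we have $\BE\,\xi(\partial B)=\lambda(\partial B)=0$, so $\xi(\partial B)=0$ almost surely and the evaluation $\mu\mapsto\mu(B)$ is almost surely continuous at $\xi$ in the vague topology; hence $\xi_n(B)$ converges in distribution to $\xi(B)$, which in particular makes $\xi_n(B)$ tight and, all variables being $\N_0$-valued, forces $\BP(\xi_n(B)=k)\to\BP(\xi(B)=k)=e^{-\lambda(B)}\lambda(B)^k/k!$ for every $k\in\N$. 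Inserting this into the left-hand side of \eqref{eq:consecProbLim} and using \eqref{eqm:characterizationPoisson} for the limiting Poisson law gives \eqref{eq:consecProbLim}.

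For (i)$\Rightarrow$(ii) the first step is to turn \eqref{eq:consecProbLim} into convergence of $\xi_n(B)$. Fix an open $B\in\mathcal U$. By tightness, every subsequence of $(\xi_n(B))_n$ has a further subsequence along which $\xi_n(B)$ converges in distribution to a proper $\N_0$-valued variable $\zeta$, so that $\BP(\xi_n(B)=k)\to\BP(\zeta=k)=:p_k$ for all $k\in\N_0$ with $\sum_k p_k=1$. Letting $n\to\infty$ along this subsequence in \eqref{eq:consecProbLim} yields $k\,p_k=\lambda(B)\,p_{k-1}$ for all $k\in\N$, so by the characterization \eqref{eqm:characterizationPoisson} the $p_k$ are the Poisson weights with parameter $\lambda(B)$. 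Since the subsequential limit is always the same, $\xi_n(B)$ converges in distribution to a Poisson variable with parameter $\lambda(B)$ for every open $B\in\mathcal U$; in particular $\BP(\xi_n(B)=0)\to e^{-\lambda(B)}$.

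Next I would pass to convergence at the level of point processes. Fix a countable topological basis $\mathcal B_0\subseteq\mathcal U$; its members are relatively compact, so every relatively compact set lies in a finite union of elements of $\mathcal B_0$, which is again an open member of $\mathcal U$. Together with the tightness in (i) this makes $(\xi_n(K))_n$ tight for every relatively compact $K$, hence $(\xi_n)_n$ is tight in the Polish space $\mathcal M(S)$; let $\eta$ be the distributional limit of some subsequence $(\xi_{n_j})_j$, which is a point process since $\mathcal N(S)$ is vaguely closed. For an open $B\in\mathcal U$ and $\varepsilon>0$, outer regularity of $\lambda$ (recall $\lambda(\partial B)=0$), compactness of $\partial B$, and the fact that $\mathcal B_0$ is a basis yield an open $V\in\mathcal U$ with $\partial B\subseteq V$ and $\lambda(V)<\varepsilon$; since $\mu\mapsto\mu(V)$ is lower semicontinuous for open $V$, the set $\{\mu:\mu(V)=0\}$ is vaguely closed, and the portmanteau theorem together with the previous step gives $e^{-\lambda(V)}=\lim_j\BP(\xi_{n_j}(V)=0)\le\BP(\eta(V)=0)$, whence $\BP(\eta(\partial B)\ge 1)\le\BP(\eta(V)\ge 1)\le 1-e^{-\lambda(V)}<\varepsilon$. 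Therefore $\eta(\partial B)=0$ almost surely, so $\mu\mapsto\mu(B)$ is almost surely continuous at $\eta$ and $\xi_{n_j}(B)$ converges in distribution to $\eta(B)$; by the previous step $\eta(B)$ is Poisson with parameter $\lambda(B)$, so $\BE\,\eta(B)=\lambda(B)$ and $\BP(\eta(B)=0)=e^{-\lambda(B)}$ for every open $B\in\mathcal U$.

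Finally I would identify $\eta$ as the Poisson process with intensity $\lambda$, which I expect to be the main obstacle: \eqref{eq:consecProbLim} only constrains each $\xi_n(B)$ individually, so a priori a subsequential limit could carry multiplicities. To exclude this, let $\eta^{\ast}\le\eta$ be the simple point process obtained from $\eta$ by deleting multiplicities, so that $\BP(\eta^{\ast}(B)=0)=\BP(\eta(B)=0)=e^{-\lambda(B)}$ for all open $B\in\mathcal U$. The open members of $\mathcal U$ contain the countable base $\mathcal B_0$ and all its finite unions, so the avoidance probabilities on them determine the law of a simple point process; as $\lambda$ is non-atomic the Poisson process with intensity $\lambda$ is simple and has exactly these avoidance probabilities, hence $\eta^{\ast}$ is a Poisson process with intensity $\lambda$. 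Then $\BE\,\eta^{\ast}(B)=\lambda(B)=\BE\,\eta(B)<\infty$ and $\eta^{\ast}(B)\le\eta(B)$ for $B\in\mathcal B_0$ force $\eta(B)=\eta^{\ast}(B)$ almost surely for each $B\in\mathcal B_0$, hence $\eta=\eta^{\ast}$ almost surely. Thus every subsequential limit of $(\xi_n)_n$ is a Poisson process with intensity $\lambda$, and tightness of $(\xi_n)_n$ yields (ii). Throughout, the characterizations of point-process convergence and of simple point processes from \cite{MR1876169,MR3642325} supply the tightness criterion, the description of vague convergence, and the avoidance-function uniqueness used above.
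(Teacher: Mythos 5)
Your proof is correct, and while the first half of (i)$\Rightarrow$(ii) (using tightness and the recursion $k\,p_k=\lambda(B)\,p_{k-1}$ to force the subsequential limits of $\xi_n(B)$ to be Poisson with parameter $\lambda(B)$) is essentially identical to the paper's argument, the second half takes a genuinely different route. The paper delegates the upgrade from one-dimensional convergence $\xi_n(B)\xrightarrow{d}\gamma(B)$ to process-level convergence to its Lemma~2.2, which first extends convergence from open sets of $\mathcal{U}$ to all sets of $\mathcal{U}$ by a sandwich argument with monotone families $B_j\subset U\subset A_j$ and then invokes Kallenberg's one-dimensional criterion (Theorem 4.15 in \cite{MR3642325}) with $\mathcal{U}$ as a dissecting ring. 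You instead work directly in $\mathcal{M}(S)$: you establish tightness of $(\xi_n)$ from tightness of $\xi_n(B)$ on finite unions of basis elements, pass to a subsequential limit $\eta$, control $\eta(\partial B)$ via small open covers of the compact boundary and the lower semicontinuity of $\mu\mapsto\mu(V)$, and then identify $\eta$ through the avoidance function of its simplification $\eta^{*}$ together with the first-moment comparison $\mathbb{E}\,\eta^{*}(B)=\lambda(B)=\mathbb{E}\,\eta(B)$ to rule out multiplicities. In effect you re-prove the relevant special case of Kallenberg's theorem rather than citing it; what this buys is a more self-contained argument that only ever uses open sets of $\mathcal{U}$ (so the sandwich step of Lemma~2.2 is not needed), at the cost of importing the avoidance-function uniqueness theorem for simple point processes and the standard tightness criterion for random measures. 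Non-atomicity of $\lambda$ plays the same role in both proofs, namely guaranteeing simplicity of the limiting Poisson process. Two small points worth making explicit in a final write-up: the class on which you compare avoidance probabilities (finite unions of elements of $\mathcal{B}_0$) should be noted to be dissecting and closed under finite unions so that the uniqueness theorem applies, and the final identity $\eta=\eta^{*}$ a.s.\ from agreement on $\mathcal{B}_0$ uses that the open sets of $\mathcal{U}$ form a $\pi$-system covering $S$, which holds since $\mathcal{U}$ is a ring.
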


\begin{remark}\label{rem:Tightness}
Note that the sequence $\xi_n(B)$, $n\in\N$, in Theorem \ref{TH} is tight by the Markov inequality if $\mathbb{E}[\xi_n(B)]\to \lambda(B)$.
\end{remark}

\begin{remark}
For a point process $\varrho$, the function $f:S\times\mathcal{N}(S)\rightarrow [0,\infty)$ defined as 
\begin{align}\label{test-funct}
f(x,\mu)
=\mathbf{1}_B(x)\mathbf{1}{\{\mu(B)=k\}}  
\end{align}
with $k\in\N$ and $B\in\mathcal{U}$ satisfies
\begin{equation}\label{eqn:MeckeLike}
 \mathbb{E}\sum_{x\in\varrho}f(x,\varrho) -\int_S\mathbb{E}\big[f(x,\varrho +\delta_x)\big]d\lambda(x)
= k \mathbb{P}(\varrho(B) =k) - \lambda(B) \mathbb{P}(\varrho (B) = k-1),
\end{equation}
where $\delta_{x}$ denotes the Dirac measure centered at $x\in S$. By the Mecke formula, the left-hand side of \eqref{eqn:MeckeLike} equals zero for all integrable functions $f:S\times\mathcal{N}(S)\rightarrow [0,\infty)$ if and only if $\varrho$ is a Poisson  process with intensity measure $\lambda$ (see e.g.\ \cite[Theorem 4.1]{MR3791470}).  Theorem \ref{TH} shows that, if we replace $\varrho$ by $\xi_n$, $n\in\N$, satisfying a tightness assumption, then the left-hand side of \eqref{eqn:MeckeLike} vanishes as $n\to\infty$ for all $f$ of the form \eqref{test-funct} if and only if $\xi_n$, $n\in\N$, converges weakly to a Poisson  process with intensity measure $\lambda$.  
\end{remark}

Next we apply Theorem \ref{TH} to investigate point processes on $S$ that are constructed from an underlying Poisson or binomial point process on a measurable space $(Y,\mathcal{Y})$. By $\mathcal{N}_\sigma(Y)$ we denote the set of all $\sigma$-finite counting measures on $Y$, which is equipped with the $\sigma$-field generated by the sets
\begin{align*}
\{
\mu\in  \mathcal{N}_\sigma(Y) \, : \, \mu(B)=k
\} , \quad k\in\N_0, B\in \mathcal{Y}.
\end{align*}
For $t\geq 1$ let $\eta_t$ be a Poisson  process on $Y$ with a $\sigma$-finite intensity measure $P_t$ (i.e.\ $\eta_t$ is a random element in $\mathcal{N}_\sigma(Y)$), while $\beta_n$ is a binomial point process of $n\in\mathbb{N}$ independent points in $Y$ which are distributed according to a probability measure $Q_n$. For a family of measurable functions $h_t: V_t\times\mathcal{N}_\sigma(Y)\rightarrow S$  with $V_t\in\mathcal{Y}$, $t\geq 1$, we are interested in the point processes
$$
\sum_{x \in \eta_{t}\cap V_t}\delta_{h_{t}(x,\eta_t)}, \quad t\geq 1, \quad \text{and} \quad \sum_{x\in\beta_n \cap V_n} \delta_{h_n(x,\beta_n)}, \quad n\in\N.
$$
In order to deal with both situations simultaneously, we introduce a joint notation. In the sequel, we study the point processes
\begin{equation}\label{generica}
\xi_{t}
=\sum_{x\in\zeta_{t}\cap U_t} \delta_{g_{ t}(x,\zeta_{ t})}, \quad t\geq 1,
\end{equation} 
where $\zeta_t=\eta_t$, $g_t=h_t$ and $U_t=V_t$ in the Poisson case, while $\zeta_t=\beta_{\lfloor t\rfloor}$, $g_t=h_{\lfloor t\rfloor}$ and $U_t=V_{\lfloor t\rfloor}$ in the binomial case. We assume
$$
\mathbb{P}(\xi_t(B)<\infty)=1 \quad  \text{ for all } B\in \widehat{\mathcal{S}}
$$
so that $\xi_t$ is  locally finite. Let $M_t$ be the intensity measure of $\xi_t$. By $K_t$ we denote the intensity measure of $\zeta_t$, i.e.\ $K_t=P_t$ if $\zeta_t=\eta_t$ and $K_t = \left\lfloor t \right\rfloor Q_{\lfloor t \rfloor}$ if $\zeta_t=\beta_{\lfloor t \rfloor}$. Moreover, we define $\hat{\zeta}_{t}=\eta_t$ in the Poisson case and $\hat{\zeta}_t=\beta_{\left\lfloor t \right\rfloor-1}$ in the binomial case. From Theorem \ref{TH} we derive the following criterion for convergence of $\xi_t$, $t\geq 1$, to a Poisson  process.

\begin{theorem}\label{real}
	Let $\xi_{t}, t\geq 1$, be a family of point processes on  $S$ given by \eqref{generica} and let $M$ be a non-atomic locally finite measure  on $S$. Fix any ring $\mathcal{U}\subset\widehat{\mathcal{S}}_{M}$ containing a countable topological basis, and assume that
	\begin{equation}\label{pplnew1}\begin{split}
	& \underset{t\rightarrow\infty}{\mathrm{lim}}\,M_t(B)
	=M (B)
	\end{split}\end{equation} 
	for all open sets $B\in\mathcal{U}$. 	Then,
	\begin{equation}\label{pplnew}\begin{split}
	 \underset{t\rightarrow\infty}{\lim} &\int_{U_t}\,\mathbb{E}\Big[\mathbf{1}\{g_t(x,\hat{\zeta}_{t}+\delta_x)\in B\}\mathbf{1}\Big\{\sum_{y\in\hat\zeta_{t}\cap U_t} \delta_{g_t(y,\hat\zeta_{t}+\delta_x)}(B)=m\Big\}\Big]dK_t(x)
	\\
	& - M(B)\mathbb{P}(\xi_t(B)=m)=0
	\end{split}\end{equation} 
	for all open sets $B\in\mathcal{U}$ and $m\in\mathbb{N}_0$, if and only if $\xi_t,t\geq 1,$ converges weakly to a Poisson  process with intensity measure $M$.
\end{theorem}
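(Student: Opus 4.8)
The plan is to deduce Theorem~\ref{real} from Theorem~\ref{TH} by checking that the integral appearing in \eqref{pplnew} is, for each open $B\in\mathcal U$ and each $m\in\mathbb{N}_0$, \emph{exactly} equal to $(m+1)\,\mathbb{P}(\xi_t(B)=m+1)$. Once this is established, \eqref{pplnew} becomes verbatim the limit relation \eqref{eq:consecProbLim}, and the tightness required in Theorem~\ref{TH}(i) follows from \eqref{pplnew1} via Remark~\ref{rem:Tightness}. Since Theorem~\ref{TH} is phrased for sequences, I would first observe that both the validity of \eqref{pplnew} as $t\to\infty$ and the weak convergence of $\xi_t$ are equivalent to the corresponding assertions along every sequence $t_n\to\infty$, and then work with one such fixed sequence.

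For the identity, fix an open set $B\in\mathcal U$ and $k\in\N$, and write $N_B(\mu):=\sum_{y\in\mu\cap U_t}\mathbf 1\{g_t(y,\mu)\in B\}$ for the mass that the point process derived from a counting measure $\mu$ assigns to $B$, so that $\xi_t(B)=N_B(\zeta_t)$. Multiplying $\xi_t(B)=\sum_{x\in\zeta_t\cap U_t}\mathbf 1\{g_t(x,\zeta_t)\in B\}$ by $\mathbf 1\{\xi_t(B)=k\}$ yields
\[
k\,\mathbf 1\{\xi_t(B)=k\}=\sum_{x\in\zeta_t}f(x,\zeta_t),\qquad f(x,\mu):=\mathbf 1_{U_t}(x)\,\mathbf 1\{g_t(x,\mu)\in B\}\,\mathbf 1\{N_B(\mu)=k\}.
\]
As $f$ is bounded and measurable, taking expectations and applying the Mecke formula in the Poisson case, and its analogue for binomial point processes in the binomial case (where $\hat\zeta_t=\beta_{\lfloor t\rfloor-1}$ is independent of the added point and $K_t=\lfloor t\rfloor Q_{\lfloor t\rfloor}$), gives
\begin{align*}
k\,\mathbb{P}(\xi_t(B)=k)
&=\int_Y\mathbb{E}\big[f(x,\hat\zeta_t+\delta_x)\big]\,dK_t(x)\\
&=\int_{U_t}\mathbb{E}\big[\mathbf 1\{g_t(x,\hat\zeta_t+\delta_x)\in B\}\,\mathbf 1\{N_B(\hat\zeta_t+\delta_x)=k\}\big]\,dK_t(x).
\end{align*}
Now for $x\in U_t$ one has $N_B(\hat\zeta_t+\delta_x)=\mathbf 1\{g_t(x,\hat\zeta_t+\delta_x)\in B\}+\sum_{y\in\hat\zeta_t\cap U_t}\delta_{g_t(y,\hat\zeta_t+\delta_x)}(B)$, so adding $\delta_x$ increases the count on $B$ by exactly one on the event $\{g_t(x,\hat\zeta_t+\delta_x)\in B\}$, and there we may replace $\mathbf 1\{N_B(\hat\zeta_t+\delta_x)=k\}$ by $\mathbf 1\{\sum_{y\in\hat\zeta_t\cap U_t}\delta_{g_t(y,\hat\zeta_t+\delta_x)}(B)=k-1\}$. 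Substituting this back shows that $k\,\mathbb{P}(\xi_t(B)=k)$ equals the integral in \eqref{pplnew} with $m=k-1$.

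With this at hand I would conclude as follows. Setting $k=m+1$, condition \eqref{pplnew} (for all open $B\in\mathcal U$ and all $m\in\mathbb{N}_0$) is literally the statement that $\lim_{t\to\infty}\big(k\,\mathbb{P}(\xi_t(B)=k)-M(B)\,\mathbb{P}(\xi_t(B)=k-1)\big)=0$ for all open $B\in\mathcal U$ and all $k\in\N$, i.e.\ \eqref{eq:consecProbLim}. Moreover, since $M_t$ is the intensity measure of $\xi_t$, assumption \eqref{pplnew1} says $\mathbb{E}[\xi_t(B)]=M_t(B)\to M(B)$ for every open $B\in\mathcal U$, so $\xi_t(B)$ is tight by Remark~\ref{rem:Tightness}. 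Applying Theorem~\ref{TH} to $\xi_{t_n}$, $n\in\N$, with the ring $\mathcal U$ and the non-atomic measure $M$, for every sequence $t_n\to\infty$, then gives both implications of the asserted equivalence.

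The argument is not technically deep; the parts that require care are getting the binomial counterpart of the Mecke formula right together with the bookkeeping role of $\hat\zeta_t$, and the routine reduction from the continuous parameter $t$ to integer sequences so that Theorem~\ref{TH} becomes applicable.
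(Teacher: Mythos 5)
Your proposal is correct and follows essentially the same route as the paper: both reduce \eqref{pplnew} to \eqref{eq:consecProbLim} by applying the Mecke formula (and its binomial analogue) to the test function $\mathbf 1_B(x)\mathbf 1\{\mu(B)=k\}$ pulled back through $g_t$, showing the integral in \eqref{pplnew} equals $(m+1)\mathbb{P}(\xi_t(B)=m+1)$, and then invoke Theorem \ref{TH} with tightness supplied by \eqref{pplnew1} and Remark \ref{rem:Tightness}. The only difference is your explicit (and harmless) remark about passing from the continuous parameter $t$ to sequences, which the paper leaves implicit.
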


\begin{remark}\label{rem:Euclidean}
One is often interested in Poisson process convergence for   $S=\R^d$, $d\geq 1$, and for the situation that the intensity measure of the Poisson process is absolutely continuous (with respect to the Lebesgue measure). In this case, we can apply Theorem \ref{TH} and Theorem \ref{real} in the following way. The family $\mathcal{R}^d$ of sets in $\R^d$ that are finite unions of Cartesian products of bounded intervals is a ring contained in the relatively compact sets of $\R^d$. For any absolutely continuous measure the boundaries of sets from $\mathcal{R}^d$ have zero measure. By $\mathcal{I}^d$ we denote the subset of open sets of $\mathcal{R}^d$, which contains a countable topological basis of $\R^d$. Note that the sets of $\mathcal{I}^d$ are finite unions of Cartesian products of bounded open intervals.
 Thus,   we prove weak convergence for sequences of point processes on $\R^d$ to Poisson processes with absolutely continuous  locally finite intensity measures by showing \eqref{eq:consecProbLim} or \eqref{pplnew1} and \eqref{pplnew} for all sets from $\mathcal{I}^d$. For $d=1$ we use the convention $\mathcal{I}=\mathcal{I}^1$.
\end{remark}

Theorem \ref{real} says that in order to establish Poisson process convergence for point processes of the form \eqref{generica}, one has to deal with the dependence between
$$
\mathbf{1}\{g_t(x,\hat{\zeta}_t +\delta_x)\in B\}
\quad 
\text{ and }
\quad 
\mathbf{1}\Big\{\sum_{y\in\hat{\zeta}_t\cap U_t} \mathbf{1}\{g_t(y,\hat{\zeta}_t +\delta_x)\in B\}=k\Big\}.
$$
We say that a statistic is locally dependent if its value at a given point depends only  on a local and deterministic neighborhood. That is, for any fixed $x\in Y$, there exists a set $A_{t,x}\in\mathcal{Y}$ with  $x\in A_{t,x}$ such that 
\begin{equation}\label{eqn:local_dependence}
\mathbf{1}\{g_t(x,\hat{\zeta}_t +\delta_x)\in B\} = \mathbf{1}\{g_t(x,\hat{\zeta}_t|_{ A_{t,x}} +\delta_x)\in B\}.
\end{equation}
Here, we denote by $\mu|_A$ the restriction of a measure $\mu$ to a set $A$. For further notions of local dependence in the context of point processes  we refer to \cite{MR1190904,ChenXia2004,MR2427851}.
Next we describe heuristically how \eqref{eqn:local_dependence} can be applied to show \eqref{pplnew} in Theorem \ref{real} for $\zeta_t=\eta_t$ if
\begin{equation}\label{eqn:approximation}
\mathbf{1}\Big\{\sum_{y\in\eta_{t}\cap U_t} \delta_{g_t(y,\eta_{t})}(B)= m \Big\} \approx \mathbf{1}\Big\{\sum_{y\in\eta_{t}\cap{A_{t,x}^{c}}\cap U_t} \delta_{g_t(y,\eta_{t}|_{A_{t,x}^c})}(B)=m\Big\}
\end{equation}
for $x\in Y$. Under the assumption \eqref{eqn:local_dependence}, the integral in \eqref{pplnew} coincides with
$$
\int_{U_t} \mathbb{E}\Big[\mathbf{1}\{g_t(x,\eta_{t}|_{A_{t,x}}+\delta_x)\in B\}\mathbf{1}\Big\{\sum_{y\in\eta_{t}\cap U_t} \delta_{g_t(y,\eta_{t})}(B)= m \Big\}\Big]dK_t(x) .
$$
By \eqref{eqn:approximation}, the last expression can be approximated by
$$
\int_{U_t} \mathbb{E}\Big[\mathbf{1}\{g_t(x,\eta_{t}|_{A_{t,x}}+\delta_x)\in B\}\mathbf{1}\Big\{\sum_{y\in\eta_{t}\cap{A_{t,x}^{c}}\cap U_t} \delta_{g_t(y,\eta_{t}|_{A_{t,x}^c})}(B)=m\Big\}\Big]dK_t(x).
$$
Due to the independence of $\eta_{t}|_{A_{t,x}}$ and $\eta_{t}|_{A_{t,x}^c}$, this can be rewritten as
$$
\int_{U_t} \mathbb{P}\Big(\sum_{y\in\eta_{t}\cap{A_{t,x}^{c}}\cap U_t} \delta_{g_t(y,\eta_{t}|_{A_{t,x}^c})}(B)=m\Big)  \mathbb{E}\big[\mathbf{1}\{g_t(x, \eta_{t}|_{A_{t,x}} +\delta_x)\in B\}\big] dK_t(x).
$$
Using once more \eqref{eqn:local_dependence} and \eqref{eqn:approximation}, the previous term can be approximated by
$$
\mathbb{P}(\xi_t(B)=m)\int_{U_t} \mathbb{E}\big[\mathbf{1}\{g_t(x, \eta_{t} +\delta_x)\in B\}\big] dK_t(x) =  \mathbb{P}(\xi_t(B)= m) M_t(B),
$$
where the last equality follows from the Mecke formula. Consequently, the expression on the left-hand side of \eqref{pplnew} becomes small if the approximation in \eqref{eqn:approximation} is good. 

In Section \ref{sec:Applications}, we provide examples for applying our abstract main results Theorem \ref{TH} and Theorem \ref{real}. Our first example in Subsection \ref{succe} are $k$-runs, i.e.\ at least $k$ successes in a row in a sequence of Bernoulli random variables. For the situation that the success probabilities converge to zero, we show that the rescaled starting points of the $k$-runs behave like a Poisson  process if some independence assumptions on the underlying Bernoulli random variables are satisfied. 

As the second and third example, we consider statistics related to inradii and circumscribed radii of inhomogeneous Poisson-Voronoi tessellations. We study the Voronoi tessellation generated by a Poisson  process $\eta_t$, $t> 0$, on $\mathbb{R}^d$ with intensity measure $t\mu$, where $\mu$ is a locally finite and absolutely continuous measure with density $f$. In Section \ref{inradius}, for any cell with the nucleus in a compact set, we take the $\mu$-measure of the ball centered at the nucleus and with  twice the inradius as the radius. We prove that the point process formed by these statistics converges in distribution after a transformation depending on $t$ to a Poisson  process as $t\to\infty$ under some minor assumptions on the density $f$. Our transformation allows us to describe the behavior of the balls with large $\mu$-measures.  In Section \ref{circ-rad-proof}, we consider for each cell with the nucleus in a compact convex set the $\mu$-measure of the ball around the nucleus with the circumscribed radius as radius and establish, after rescaling with a power of $t$, convergence in distribution to a Poisson  process for $t\to\infty$. This result requires continuity of $f$, but under weaker assumptions on $f$, we provide  lower and upper bounds for the  tail distribution of the minimal $\mu$-measure of these balls having the circumscribed radii as radii.

In \cite{MR3252817}, the limiting distributions of the maximal inradius and the minimal circumscribed radius of a stationary Poisson-Voronoi tessellation were derived. In our work, we extend these results in two directions. First, our findings imply Poisson  process convergence of the transformed  inradii and  circumscribed radii for the stationary case.  This implies the mentioned results from \cite{MR3252817} and allows to deal with the $m$-th largest (or smallest) value or combinations of several order statistics. Second, we deal with inhomogeneous Poisson-Voronoi tessellations. In \cite{Chenavier2014} some general results for the extremes of stationary tessellations were deduced, but they cannot be applied to inhomogeneous Poisson-Voronoi tessellations. For stationary Poisson-Voronoi tessellations the convergence of the nuclei of extreme cells to a compound Poisson process was studied in \cite{ChenavierRobert2018}.

As our Theorem \ref{real} deals with underlying Poisson and binomial point processes, we expect that one can extend our results on inradii and circumscribed radii of Poisson-Voronoi tessellations to Voronoi tessellations constructed from an underlying binomial point process.

Before we discuss our applications in Section \ref{sec:Applications}, we prove our main results in the next section.

\section{Proofs of the main results}

Recall that $S$ is a locally compact second countable Hausdorff space, which is abbreviated as lcscH space. A topological space is second countable if its topology has a countable basis, and it is locally compact if every point has an open neighborhood whose topological closure is compact. A family of sets $\mathcal{C}\subset\widehat{\mathcal{S}}$ is called dissecting if
 \begin{itemize}
 	\item [(i)]  every open set $G\subset S$ can be written  as a countable union of sets in $\mathcal{C}$,
 	\item [(ii)] every relatively compact set $B\in\widehat{\mathcal{S}}$ is covered by finitely many sets in $\mathcal{C}$.
 \end{itemize}

\begin{lemma}\label{lem:dissect}
A countable topological basis $\mathcal{T}$ of $S$ is dissecting.
\end{lemma}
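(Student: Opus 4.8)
The plan is to verify the two defining properties of a dissecting family directly for a countable topological basis $\mathcal{T}$ of the lcscH space $S$. The only mild subtlety is that the definition of ``dissecting'' requires $\mathcal{C} \subset \widehat{\mathcal{S}}$, i.e.\ every member must be relatively compact; a general basis need not consist of relatively compact sets, so the first thing I would do is pass to a suitable subfamily. Since $S$ is locally compact and second countable, every point has an open neighborhood with compact closure, and by intersecting such neighborhoods with basis elements one sees that the relatively compact members of $\mathcal{T}$ still form a basis; denote this (still countable) subfamily again by $\mathcal{T}$, so without loss of generality every $T \in \mathcal{T}$ is relatively compact and hence lies in $\widehat{\mathcal{S}}$.

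For property (i), let $G \subset S$ be open. Because $\mathcal{T}$ is a basis, $G$ is the union of all basis elements contained in $G$; since $\mathcal{T}$ is countable, this is a countable union of sets from $\mathcal{T}$, which is exactly what (i) demands.

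For property (ii), let $B \in \widehat{\mathcal{S}}$, so $\overline{B}$ is compact. Each point $x \in \overline{B}$ lies in some $T_x \in \mathcal{T}$ (using that $\mathcal{T}$ is a basis and $S = \bigcup \mathcal{T}$), and the collection $\{T_x : x \in \overline{B}\}$ is an open cover of the compact set $\overline{B}$. Extracting a finite subcover yields finitely many sets $T_{x_1}, \dots, T_{x_N} \in \mathcal{T}$ with $B \subseteq \overline{B} \subseteq T_{x_1} \cup \dots \cup T_{x_N}$, establishing (ii).

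I do not expect any real obstacle here; the argument is a routine unwinding of the definitions, with the one point worth stating explicitly being the reduction to relatively compact basis elements using local compactness so that the family genuinely sits inside $\widehat{\mathcal{S}}$.
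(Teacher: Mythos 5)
Your argument is correct and is essentially the paper's own proof: property (i) is immediate from the definition of a basis, and property (ii) follows by extracting a finite subcover of the compact set $\overline{B}$ from the basis elements covering $S$. The preliminary reduction to relatively compact basis elements is a sensible extra precaution that the paper glosses over (strictly speaking it changes $\mathcal{T}$ to a sub-basis rather than proving the claim for the original $\mathcal{T}$), but it is moot where the lemma is applied, since there the basis is contained in a ring $\mathcal{U}\subseteq\widehat{\mathcal{S}}_{\lambda}$ and so already consists of relatively compact sets.
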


\begin{proof}
By the definition of a countable topological basis $\mathcal{T}$ has property (i) of a dissecting family of sets. Since, for any $B\in\widehat{\mathcal{S}}$, $\cup_{T\in\mathcal{T}}T=S \supset \overline{B}$, the compactness of $\overline{B}$ implies that (ii) is satisfied.
\end{proof}

Let us now state a consequence of \cite[Theorem 4.15]{MR3642325} and \cite[Theorem 16.16]{MR1876169} or \cite[Theorem 4.11]{MR3642325}. This result  will be used in the  proof of Theorem \ref{TH}. We write $\overset{d}{\to}$ to denote convergence in distribution.
\begin{lemma}\label{lem: equiv-Wconv} 
Let $\xi_n, n\in\N$, be a sequence of point processes on $S$, and let $\gamma$ be a Poisson process on $S$  with a non-atomic locally finite intensity  measure $\lambda$. Let $\mathcal{U}\subset \widehat{\mathcal{S}}_\lambda$ be a ring containing a countable topological basis. Then the following statements are equivalent:
\begin{itemize}
	\item [(i)]
	$\xi_n\xrightarrow{d} \gamma$.
	\item [(ii)]
	$\xi_n (B)\xrightarrow{d} \gamma(B)$ for all open sets $B\in\mathcal{U}$.
\end{itemize}
\end{lemma}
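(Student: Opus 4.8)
The plan is to treat the two implications separately. For $(i)\Rightarrow(ii)$ I would invoke the continuous mapping theorem: for a relatively compact set $B$, the evaluation $\mathcal{M}(S)\ni\mu\mapsto\mu(B)$ is continuous at every $\mu$ with $\mu(\partial B)=0$, by the portmanteau properties of the vague topology. Since $B\in\mathcal{U}\subseteq\widehat{\mathcal{S}}_\lambda$, we have $\lambda(\partial B)=0$, so $\mathbb{E}[\gamma(\partial B)]=\lambda(\partial B)=0$ and hence $\gamma(\partial B)=0$ almost surely; thus $\mu\mapsto\mu(B)$ is continuous on a set of full measure under the law of $\gamma$, and the continuous mapping theorem gives $\xi_n(B)\xrightarrow{d}\gamma(B)$ (in fact for every $B\in\mathcal{U}$, not only the open ones).

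For $(ii)\Rightarrow(i)$ I would feed the hypotheses into the cited limit theorems. By Lemma~\ref{lem:dissect}, the countable topological basis contained in $\mathcal{U}$ is dissecting, hence so is $\mathcal{U}$; and, by the computation above, every $B\in\mathcal{U}$ is a $\gamma$-continuity set, i.e.\ $\gamma(\partial B)=0$ a.s. Since $\lambda$ is non-atomic, $\gamma$ is almost surely simple. The general characterization of weak convergence for point processes (\cite[Theorem 16.16]{MR1876169} or \cite[Theorem 4.11]{MR3642325}) reduces $\xi_n\xrightarrow{d}\gamma$ to convergence of the finite-dimensional distributions $(\xi_n(B_1),\dots,\xi_n(B_m))\xrightarrow{d}(\gamma(B_1),\dots,\gamma(B_m))$ over a dissecting ring of continuity sets. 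Replacing $B_1,\dots,B_m$ by pairwise disjoint sets $C_1,\dots,C_\ell\in\mathcal{U}$ obtained from them through intersections and differences, so that each $B_i$ is a union of some of the $C_j$, it suffices to establish joint convergence of $(\xi_n(C_1),\dots,\xi_n(C_\ell))$; here the limits $\gamma(C_1),\dots,\gamma(C_\ell)$ are independent and Poisson distributed. The Poisson limit criterion \cite[Theorem 4.15]{MR3642325} provides exactly the missing input: for a Poisson target with non-atomic intensity, this joint convergence already follows from the one-dimensional convergences $\xi_n(C_j)\xrightarrow{d}\gamma(C_j)$, with no separate verification of asymptotic independence needed. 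Hence condition $(ii)$ suffices.

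I expect the part requiring the most care to be bookkeeping rather than a new idea. Two points deserve attention: first, $(ii)$ is assumed only for \emph{open} $B\in\mathcal{U}$, whereas the sets $C_j$ above need not be open, so one has to argue---using $\gamma(\partial C_j)=0$ a.s.\ together with a representation of $\interior C_j$ as a countable union of basis elements from $\mathcal{U}$---that testing on open sets is enough; second, one should note that distributional convergence of each $\xi_n(B)$ already makes the $\N_0$-valued sequence $(\xi_n(B))_{n}$ tight, which is precisely what rules out mass escaping to infinity and is required before the Poisson convergence criterion can be applied. The remaining verifications---the dissecting property and the boundary conditions---are immediate from Lemma~\ref{lem:dissect} and the definition of $\widehat{\mathcal{S}}_\lambda$.
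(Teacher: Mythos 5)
Your overall strategy coincides with the paper's: $(i)\Rightarrow(ii)$ via continuity of the evaluation maps at $\gamma$-continuity sets (equivalently, \cite[Theorem 16.16]{MR1876169}), and $(ii)\Rightarrow(i)$ via the one-dimensional Poisson criterion \cite[Theorem 4.15]{MR3642325} over the dissecting ring $\mathcal{U}$. The first direction is fine. For the second, you correctly identify the only nontrivial point --- that $(ii)$ is assumed only for \emph{open} members of $\mathcal{U}$, while the criterion needs $\xi_n(U)\xrightarrow{d}\gamma(U)$ for \emph{every} $U\in\mathcal{U}$ --- but your sketch of how to close this does not suffice as stated. You propose to use $\gamma(\partial U)=0$ a.s.\ together with a representation of $\operatorname{int}(U)$ as a countable union of basis elements of $\mathcal{U}$. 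Exhausting $\operatorname{int}(U)$ from inside by open sets $B_j\in\mathcal{U}$ only yields the one-sided bound $\liminf_{n}\mathbb{P}(\xi_n(U)\geq m)\geq\mathbb{P}(\gamma(\operatorname{int}(U))\geq m)=\mathbb{P}(\gamma(U)\geq m)$; combined with tightness this shows that any subsequential limit stochastically dominates $\gamma(U)$, which does not identify the limit. (Nor does tightness of $\xi_n(U)$ itself follow from $(ii)$ until one has produced an open superset of $U$ lying in $\mathcal{U}$.)

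What is missing is the matching \emph{outer} approximation: a decreasing sequence of open sets $A_j\in\mathcal{U}$ with $\overline{U}\subset A_j$ and $\lambda(A_j)\to\lambda(\overline{U})=\lambda(U)$, giving $\limsup_{n}\mathbb{P}(\xi_n(U)\geq m)\leq\mathbb{P}(\gamma(A_j)\geq m)\to\mathbb{P}(\gamma(U)\geq m)$. Producing such $A_j$ inside $\mathcal{U}$ is where the hypotheses actually get used: one takes arbitrary open $A_j\downarrow\overline{U}$, covers each $A_j$ by open basis elements belonging to $\mathcal{U}$, extracts a finite subcover of the compact set $\overline{U}$, takes the union (a ring operation) to land back in $\mathcal{U}$, and intersects to make the family monotone in $j$. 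This construction is the bulk of the paper's proof of the lemma, and it is not ``immediate from Lemma~\ref{lem:dissect} and the definition of $\widehat{\mathcal{S}}_\lambda$'' as your closing remark suggests. Once both approximations are in place, the sandwich $\mathbb{P}(\gamma(B_j)\geq m)\leq\liminf_{n}\mathbb{P}(\xi_n(U)\geq m)\leq\limsup_{n}\mathbb{P}(\xi_n(U)\geq m)\leq\mathbb{P}(\gamma(A_j)\geq m)$ closes as $j\to\infty$ using $\lambda(\partial U)=0$, and the rest of your argument goes through.
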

\begin{proof}
Observe that \cite[Theorem 3.6]{MR3791470} ensures the existence of a Poisson process $\gamma$ with  intensity measure $\lambda$. Since $\lambda$ has no atoms,  from \cite[Proposition 6.9]{MR3791470} it follows that $\gamma$ is a simple point process (i.e.\ $\mathbb{P}(\gamma(\{x\})\leq 1 \text{ for all } x\in S )=1$). Elementary arguments also yield $\widehat{\mathcal{S}}_\lambda=\{ B\in \widehat{\mathcal{S}}\, : \, \gamma(\partial B) =0\, \, \text{a.s.}\}=:\widehat{\mathcal{S}}_\gamma$.

It follows from Lemma \ref{lem:dissect} that $\mathcal{U}$ is dissecting. By \cite[Theorem 16.16 (ii)]{MR1876169} or \cite[Theorem 4.11]{MR3642325} with $\mathcal{U}$ as dissecting semi-ring, we obtain that $(i)$ implies $(ii)$.

Conversely, if $\xi_n(U)\xrightarrow{d} \xi(U)$ for all $U\in\mathcal{U}$, the desired result follows from \cite[Theorem 4.15]{MR3642325}, whose conditions are satisfied with $\mathcal{U}$ as dissecting ring and semi-ring. Thus, it is enough to show that $(ii)$ implies $\xi_n(U)\xrightarrow{d} \xi(U)$ for all $U\in\mathcal{U}$.

For any $U\in\mathcal{U}$ there exists a sequence of open sets $A_j,j\in\N$, such that  
$$
U\subset A_j, \quad A_{j+1}\subset A_j \quad \text{and}\quad  \overline{U}=\cap_{j\in\N}A_j.
$$
Since $\mathcal{U}$ contains a countable topological basis, for any $A_j$ one can find a countable family of open sets $B^{(j)}_\ell, \ell\in\N$, in $\mathcal{U}$ such that $\cup_{\ell\in\N}B^{(j)}_\ell = A_j$.  In particular, they cover the compact set $\overline{U}$. So there exists a finite subcover of elements from $B^{(j)}_\ell, \ell\in\N$, that covers $\overline{U}$. Since $\mathcal{U}$ is a ring, the union of the elements belonging to this subcover of $\overline{U}$ is in $\mathcal{U}$ for each $j\in\mathbb{N}$. Because $\mathcal{U}$ is closed under finite intersections, we can make this family of sets from $\mathcal{U}$ that contain $\overline{U}$ monotonously decreasing in $j$. Thus, without loss of generality, we may assume $ A_j\in\mathcal{U}$ for all $j\in\N$.

Since $\mathcal{U}$ is a ring and contains a countable topological basis, for the interior $\operatorname{int}(U)$ of $U$ there exists a sequence of open sets $B_j\in\mathcal{U}$, $j\in\N$, such that 
$$
 B_j\subset U, \quad B_{j}\subset B_{j+1} \quad \text{and} \quad  \operatorname{int}(U)=\cup_{j\in\N} B_j.
$$
For a fixed $m\in\N,$ we have that
\begin{align*}
	\mathbb{P}(\xi_n(B_j)\geq m) 
	 \leq \mathbb{P}(\xi_n (U)\geq m) \leq \mathbb{P}(\xi_n (A_j)\geq m) 
\end{align*}
for all $n\in\N$.  By $\xi_n(U')\xrightarrow{d}\gamma(U')$ for all open sets $U'\in\mathcal{U}$, we obtain
\begin{align}\label{eqn: ineqConvDistr}
	\mathbb{P}(\gamma(B_j)\geq m) 
	\leq \liminf_{n\to\infty}\mathbb{P}(\xi_n (U)\geq m) \leq \limsup_{n\to\infty}\mathbb{P}(\xi_n (U)\geq m)\leq \mathbb{P}(\gamma (A_j)\geq m).
\end{align}
Moreover, from $U\in\widehat{\mathcal{S}}_{\lambda}$, whence $\lambda(\partial U)=0$, it follows that $\lambda(B_j)\rightarrow \lambda(\operatorname{int}(U))= \lambda (U)$ and $\lambda(A_j)\to \lambda(\overline{U}) =\lambda(U)$ as $j\to\infty$. Thus, letting $j\to\infty$ in \eqref{eqn: ineqConvDistr} and using that $\gamma$ is a Poisson process lead to
$$
\lim_{n\rightarrow\infty}\mathbb{P}(\xi_n(U)\geq m)= \mathbb{P}(\gamma(U)\geq m).
$$
This establishes $\xi_n(U)\xrightarrow{d}\xi(U)$ and concludes the proof.
\end{proof}
We are now in the position to prove the first main result of this paper.
\begin{proof}[Proof of Theorem \ref{TH}] 
Let us show $(i)$ implies $(ii)$. By Lemma  \ref{lem: equiv-Wconv} it  is enough to prove that $\xi_n (B)\xrightarrow{d} \gamma(B)$ for all open sets $B\in\mathcal{U}$.  Since $\mathbb{P}(\xi_n(B)=0)$, $n\in\mathbb{N}$, is a bounded sequence in $[0,1]$, there exists a subsequence such that $\underset{j\rightarrow\infty}{\lim}\,\mathbb{P}(\xi_{n_j}(B)=0)$ exists; then repeated applications of \eqref{eq:consecProbLim} yield for $k\in\N$ that  
\begin{align}\label{a3}
\underset{j\rightarrow\infty}{\lim}\,\mathbb{P}(\xi_{n_j}(B)=k)
=\frac{\lambda(B)^k}{k!}\underset{j\rightarrow\infty}{\lim}\,\mathbb{P}(\xi_{n_j}(B)=0).
\end{align}
Consequently we have for any $N\in\mathbb{N}$,
\begin{align*}
\sum_{k=0}^N \underset{j\rightarrow\infty}{\lim}\,\mathbb{P}(\xi_{n_j}(B)=k)
& =\underset{j\rightarrow\infty}{\lim}\,\mathbb{P}(\xi_{n_j}(B)\in\{0,\dots,N\})
\\
& = 1-\underset{j\rightarrow\infty}{\lim}\,\mathbb{P}(\xi_{n_j}(B)\in\{N+1,N+2,\dots\}).
\end{align*} 
By tightness of $\xi_{n_j}(B)$, $j\in\mathbb{N}$, the right-hand side of the equation converges to 1 as $N\rightarrow\infty$ so that
$$
\sum_{k\in\mathbb{N}_0}\underset{j\rightarrow\infty}{\lim}\,\mathbb{P}(\xi_{n_j}(B)=k)=1.
$$
Thus, from  \eqref{a3} we deduce   $\underset{j\rightarrow\infty}{\lim}\,\mathbb{P}(\xi_{n_j}(B)=0)=e^{-\lambda(B)}$. Together with \eqref{a3},  this proves that
\begin{equation*}
\underset{j\rightarrow\infty}{\lim}\,\mathbb{P}(\xi_{n_j}(B)=k)=\frac{\lambda(B)}{k!}\,e^{-\lambda(B)}
\end{equation*}
for all $k\in\N_0$. In conclusion, since for any subsequence $(n_\ell)_{\ell\in\mathbb{N}}$ there exists a further subsequence $(n_{\ell_i})_{i\in\mathbb{N}}$ such that $\mathbb{P}(\xi_{n_{\ell_i}}(B)=0)\}$, $i\in\mathbb{N}$, converges to $e^{-\lambda(B)}$, we obtain $$
\underset{n\rightarrow\infty}{\lim}\,\mathbb{P}(\xi_{n}(B)=k)=\frac{\lambda(B)}{k!}\,e^{-\lambda(B)}
$$ 
for all $k\in\N_0$. The result follows by applying Lemma \ref{lem: equiv-Wconv}.

Conversely, let us assume $\xi_n\xrightarrow{d}\gamma$ for some Poisson  process $\gamma$ with intensity measure $\lambda$. It follows from Lemma \ref{lem: equiv-Wconv} that, for any open set $B\in\mathcal{U}$, $\xi_n(B)\xrightarrow{d}\gamma(B)$ so that $\xi_n(B), n\in\N$, is tight and
\begin{align*}
0
& = k\,\mathbb{P}(\gamma(B)=k) - \lambda(B)\mathbb{P}(\gamma(B)=k-1)
\\
&
= \lim_{n\rightarrow\infty} k\,\mathbb{P}(\xi_n(B)=k) - \lambda(B)\mathbb{P}(\xi_n(B)=k-1)
\end{align*}
for $k\in\N$, which shows $(i)$. 
\end{proof}
Finally, we derive Theorem \ref{real}  from Theorem \ref{TH}.
\begin{proof}[Proof of Theorem \ref{real}]
By \eqref{pplnew1} and the Markov inequality we deduce that $\xi_t(B), t\geq 1,$ is tight for all open $B\in\mathcal{U}$.   Let $f:S\times\mathcal{N}(S)\rightarrow [0,\infty)$ be the function given by
	\begin{align*}
	f(x,\mu)
	=\mathbf{1}_B(x)\mathbf{1}{\{\mu(B)=k\}}  
	\end{align*}
	for  $k\in\N$ and $B\in\mathcal{U}$. Then, by applying the Mecke equation (if $\zeta_t =\eta_t$) and the identity
	$$
	\BE\sum_{x\in \beta_n} u(x,\beta_n) = n \int_Y \BE[u(x,\beta_{n-1}+\delta_x)] dQ_n(x)
	$$
for any measurable function $u: Y\times \mathcal{N}_{\sigma}(Y)\to[0,\infty)$	(if $\zeta_t=\beta_{\lfloor t \rfloor}$), we obtain
\begin{align*}
& k\mathbb{P}(\xi_t(B)=k) =\mathbb{E}\sum_{z\in\xi_t}f(z,\xi_t) = \mathbb{E}\sum_{x\in \zeta_t\cap U_t } f(g_t(x,\zeta_t),\xi_t(\zeta_t))
\\
& = \int_{U_t}\,\mathbb{E}\Big[\mathbf{1}\{g_t(x,\hat{\zeta}_{t}+\delta_x)\in B\}\mathbf{1}\Big\{\sum_{y\in\hat\zeta_{t}\cap U_t} \delta_{g_t(y,\hat\zeta_{t}+\delta_x)}(B)=k-1\Big\}\Big]d K_t (x).
\end{align*}	
Thus, Theorem \ref{TH} yields the equivalence between \eqref{pplnew} and the convergence in distribution of $\xi_t, t\geq 1$, to a Poisson  process with intensity measure $M$.
\end{proof}

\section{Applications}\label{sec:Applications}

All our examples throughout this section concern point processes on $\R$. By Remark \ref{rem:Euclidean}, it is sufficient for the convergence of such point processes to a Poisson process on $\R$ with absolutely continuous locally finite intensity measure to show \eqref{eq:consecProbLim} or \eqref{pplnew1} and \eqref{pplnew} for all sets from $\mathcal{I}$, i.e.\ for all finite unions of open and bounded intervals.

\subsection{Long head runs}\label{succe}  
Consider a sequence of Bernoulli random variables. A $k$-head run is defined as an uninterrupted sequence of $k$ successes, where $k$ is a positive integer. For example, for $k = 1$, one simply studies the successes, while for $k = 2$ one considers the occurrence of two consecutive successes in a row. Several authors have investigated the number of $k$-head runs in a sequence of Bernoulli random variables; for an overview on this topic, we refer to \cite{MR1882476}. Let the starting point of a $k$-head run be the index of its first success. Our goal is to find explicit conditions under which the point process of rescaled starting points of the $k$-head runs converges weakly to a Poisson process. Our investigation relies on two assumptions: the probability of having a $k$-head run is the same  for all $k$ consecutive elements of the sequence, and  the Bernoulli random variables are independent if \emph{far  away}. We will see that if these conditions are satisfied and if the probability of having a $k$-head run goes to $0$ slower than the probability of having  a $k$-head run with at least another $k$-head run \emph{nearby}, then the aforementioned point process converges in distribution to a Poisson process.

Let us now give a precise formulation of our result. Let  $X^{(n)}_{i}$, $i,n\in\mathbb{N},$ be an array of Bernoulli distributed random variables and let $k\in\N$. Assume that there exists a function $f: \N \rightarrow \N$ such that for all $q,n\in\N$ the random variable $X^{(n)}_q$ is independent of $\{X^{(n)}_\ell : \vert q-\ell \vert \geq f(n), \, \ell\in\N\}$ and that
$$
y_n := \mathbb{P}\big( X^{(n)}_q=1,\dots,X^{(n)}_{q+k-1}=1\big)>0
$$
does not depend on $q$. If $X^{(n)}_{i}$, $i\in\mathbb{N}$, are i.i.d.\ for $n\in\N$, then $y_n=p_n^k$ with $p_n:=\mathbb{P}\big( X_1^{(n)}=1 \big)$. Define
\begin{align*}
I^{(n)}_i
=\mathbf{1}\big\{ X^{(n)}_i=1,\dots,X^{(n)}_{i+k-1}=1\big\}, \quad i\in\N.
\end{align*}
Let  $\xi_n$ be the point process of the $k$-head runs for $X_i^{(n)}, i\in\N$,  that is
\begin{align}\label{cons-succ}
\xi_n = \sum_{i=1}^\infty I_i^{(n)}\delta_{i y_n}. 
\end{align}
For any $i_0\in\N$, let
$$
W^{(n)}_{i_0}  
= \sum_{j\in \N \, : \,1\leq  \vert j-i_0 \vert \leq f(n)+k-2} I^{(n)}_j .
$$
We denote by $\lambda_1$ the restriction of the Lebesgue measure to $[0,\infty)$.
 
\begin{theorem}\label{Thm-S}
Let $\xi_n$, $n\in\N$,  be the sequence of  point processes given by \eqref{cons-succ}. Assume that $f(n)y_n\rightarrow 0$ and that
\begin{align}\label{eq:condition-on-sum}
\lim_{n\rightarrow\infty} \sup_{i\in \N} \, y_n^{-1} \mathbb{E}\big[I_i\mathbf{1}\{W_{i}^{(n)}>0\}\big] =0.
\end{align}
Then $\xi_n$ converges weakly to a Poisson  process with intensity measure $\lambda_1$.
\end{theorem}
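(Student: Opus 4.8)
The plan is to apply Theorem \ref{real} in its binomial-free form, or rather to apply Theorem \ref{TH} directly, since the point process $\xi_n$ in \eqref{cons-succ} is built from an array of Bernoulli variables rather than from a Poisson or binomial point process; a direct verification of condition (i) of Theorem \ref{TH} is cleanest. By Remark \ref{rem:Euclidean} it suffices to work with sets $B\in\mathcal{I}$, i.e.\ finite unions of bounded open intervals, and since $\lambda_1$ has no atoms and all such $B$ have $\lambda_1(\partial B)=0$, the ring hypothesis is met. Thus I must show two things for each such $B$: first, $\xi_n(B)$ is tight, and second, the consecutive-probability relation \eqref{eq:consecProbLim} holds with $\lambda=\lambda_1$.

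For tightness I would compute $\BE[\xi_n(B)]$ and invoke Remark \ref{rem:Tightness}: writing $B$ as a union of intervals, $\xi_n(B)=\sum_{i:\,i y_n\in B} I_i^{(n)}$, so $\BE[\xi_n(B)]=\sum_{i:\, iy_n\in B}\BE[I_i^{(n)}]=y_n\cdot\card\{i:\,iy_n\in B\}$, and since the indices $i$ with $iy_n\in B$ form a set whose cardinality is asymptotically $\lambda_1(B)/y_n$ (as $y_n\to 0$, forced by $f(n)y_n\to 0$ together with $f(n)\geq 1$), we get $\BE[\xi_n(B)]\to\lambda_1(B)$. This gives tightness and also identifies the candidate intensity measure. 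The heart of the proof is \eqref{eq:consecProbLim}. The key identity is a Mecke-type/size-biasing formula for sums of indicators: for any $k\in\N$,
\begin{align*}
k\,\BP(\xi_n(B)=k)
&=\BE\Big[\xi_n(B)\,\mathbf{1}\{\xi_n(B)=k\}\Big]
=\sum_{i:\,iy_n\in B}\BE\big[I_i^{(n)}\mathbf{1}\{\xi_n(B)=k\}\big]\\
&=\sum_{i:\,iy_n\in B}\BP\big(\xi_n(B)=k\mid I_i^{(n)}=1\big)\,\BP(I_i^{(n)}=1)\\
&=y_n\sum_{i:\,iy_n\in B}\BP\big(\xi_n(B)=k\mid I_i^{(n)}=1\big).
\end{align*}
On the conditional event $\{I_i^{(n)}=1\}$, I would split $\xi_n(B)=I_i^{(n)}+\xi_n^{(i,\mathrm{near})}(B)+\xi_n^{(i,\mathrm{far})}(B)$, where the ``near'' part collects the indices $j$ with $1\le|j-i|\le f(n)+k-2$ and the ``far'' part the remaining ones. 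Condition \eqref{eq:condition-on-sum} says precisely that $y_n^{-1}\BE[I_i^{(n)}\mathbf{1}\{W_i^{(n)}>0\}]\to 0$ uniformly in $i$, i.e.\ the near part vanishes on the conditional law of weight $y_n$; hence up to an error that is $o(1)$ after multiplication by $y_n$ and summation over the $\asymp\lambda_1(B)/y_n$ relevant indices, one may replace $\xi_n(B)$ by $1+\xi_n^{(i,\mathrm{far})}(B)$ on the event $\{I_i^{(n)}=1\}$. By the independence-at-distance assumption, $\xi_n^{(i,\mathrm{far})}(B)$ depends only on $\{X_\ell^{(n)}:\,|\ell-i|\ge f(n)\}$ (after a harmless further truncation to kill boundary-of-$B$ effects, which are $O(f(n)y_n)=o(1)$ since only $O(f(n))$ indices sit near each endpoint of $B$) and is therefore independent of $I_i^{(n)}$, so $\BP(\xi_n^{(i,\mathrm{far})}(B)=k-1\mid I_i^{(n)}=1)=\BP(\xi_n^{(i,\mathrm{far})}(B)=k-1)$. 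Undoing the ``near'' replacement once more (this time unconditionally, again at cost $o(1)$) gives $\BP(\xi_n^{(i,\mathrm{far})}(B)=k-1)=\BP(\xi_n(B)=k-1)+o(1)$ uniformly in $i$. Substituting back,
\[
k\,\BP(\xi_n(B)=k)=y_n\cdot\card\{i:\,iy_n\in B\}\cdot\big(\BP(\xi_n(B)=k-1)+o(1)\big)=\lambda_1(B)\,\BP(\xi_n(B)=k-1)+o(1),
\]
which is \eqref{eq:consecProbLim}; Theorem \ref{TH} then delivers the claimed convergence.

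The main obstacle is the bookkeeping in the near/far decomposition: one must show that the errors incurred by (a) discarding the near part, (b) truncating the far part so that it becomes genuinely independent of $I_i^{(n)}$ rather than merely ``far away'', and (c) handling the indices $i$ for which $iy_n$ lies within distance $O(f(n)y_n)$ of $\partial B$ so that the near/far partition interacts with $B$ itself, are all $o(1)$ after the factor $y_n$ and the sum over $\asymp 1/y_n$ indices. Each of these is controlled by $f(n)y_n\to 0$ and by the uniform-in-$i$ statement \eqref{eq:condition-on-sum}, but making the replacement ``$\BP(\cdot\mid I_i^{(n)}=1)\approx\BP(\cdot)$'' rigorous requires care because conditioning on $\{I_i^{(n)}=1\}$ is conditioning on an event of probability $y_n\to 0$; the clean way is to never condition and instead estimate $\big|\BE[I_i^{(n)}\mathbf{1}\{\xi_n(B)=k\}]-\BP(I_i^{(n)}=1)\BP(\xi_n^{(i,\mathrm{far})}(B)=k-1)\big|$ directly, bounding the difference by $\BE[I_i^{(n)}\mathbf{1}\{W_i^{(n)}>0\}]$ plus the boundary term, and only then summing.
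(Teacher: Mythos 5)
Your proposal follows essentially the same route as the paper: verify tightness via $\BE[\xi_n(B)]\to\lambda_1(B)$, use the size-biasing identity $k\,\BP(\xi_n(B)=k)=\sum_i\BE[I_i^{(n)}\mathbf{1}\{\xi_n(B)=k\}]$, discard the near window $W_i^{(n)}$ at cost $\BE[I_i^{(n)}\mathbf{1}\{W_i^{(n)}>0\}]$ controlled by \eqref{eq:condition-on-sum}, factor by independence at distance, and restore the full count at cost $\BP(W_i^{(n)}+I_i^{(n)}>0)=O(f(n)y_n)$ before invoking Theorem \ref{TH}. The closing remark that one should avoid conditioning on $\{I_i^{(n)}=1\}$ and instead bound the unconditioned differences directly is exactly how the paper executes the argument, so the proposal is correct and matches the paper's proof.
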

For underlying independent Bernoulli random variables, the Poisson approximation of the random variable $\xi_n((0,u))$, $u>0$, is considered in e.g.\ \cite{arratia1990,MR1163825,MR1133732, MR1334894} and the Poisson process convergence follows from the results of \cite{arratia1990}. Quantitative bounds for the Poisson process approximation of $2$-runs in the i.i.d.\ case were derived in \cite[Proposition 3.C]{MR2543874} and \cite[Theorem 6.3]{Xia2005}; see also \cite[Subsection 3.5]{ChenXia2004}, where the Poisson process approximation for the more general problem of counting rare words is considered.

As a consequence of Theorem \ref{Thm-S}, we can study the limiting distribution of 
$$
T_n = \min \{i\in\N \, : \, I^{(n)}_i=1 \},
$$
which gives the first arrival time of a $k$-head run for a sequence of  Bernoulli random variables.

\begin{corollary} 
If the assumptions of Theorem \ref{Thm-S} are satisfied, then $y_nT_n$ converges in distribution to an exponentially distributed random variable with parameter $1$.
\end{corollary}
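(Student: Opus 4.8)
The plan is to translate the statement about the first $k$-head run $T_n$ into a statement about $\xi_n$ evaluated on an interval, and then to quote the point process convergence of Theorem \ref{Thm-S}. Fix $u>0$. Since every atom of $\xi_n$ sits at a location $iy_n>0$, we have $\xi_n((0,u))=\sum_{i\in\N:\,iy_n<u}I_i^{(n)}$, so $\{\xi_n((0,u))=0\}$ is the event that $I_i^{(n)}=0$ for all $i\in\N$ with $i<u/y_n$. On the other hand, writing $T_n=\infty$ (hence $y_nT_n=\infty$) when no $k$-head run occurs in the $n$-th row, $\{y_nT_n>u\}$ is the event that $I_i^{(n)}=0$ for all $i\in\N$ with $i\le u/y_n$. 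First I would observe that these two events coincide whenever $u/y_n\notin\N$, and that when $u/y_n=m\in\N$ they differ only by $\{I_1^{(n)}=\dots=I_{m-1}^{(n)}=0,\ I_m^{(n)}=1\}$, an event of probability at most $\mathbb{P}(I_m^{(n)}=1)=y_n$. Consequently $\bigl|\mathbb{P}(y_nT_n>u)-\mathbb{P}(\xi_n((0,u))=0)\bigr|\le y_n$ for every $n$, and $y_n\to0$ since $f(n)y_n\to0$ with $f(n)\ge1$.

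Next I would invoke Theorem \ref{Thm-S}, which gives $\xi_n\overset{d}{\to}\gamma$ for a Poisson process $\gamma$ with intensity $\lambda_1$. As $(0,u)$ is a bounded open interval, it belongs to the ring $\mathcal{I}\subseteq\widehat{\mathcal{S}}_{\lambda_1}$ from Remark \ref{rem:Euclidean}, so Lemma \ref{lem: equiv-Wconv} (applied with $\mathcal{U}=\mathcal{I}$) yields $\xi_n((0,u))\overset{d}{\to}\gamma((0,u))$, and $\gamma((0,u))$ is Poisson distributed with mean $\lambda_1((0,u))=u$. Since these random variables are $\N_0$-valued, convergence in distribution entails pointwise convergence of their probability mass functions; in particular $\mathbb{P}(\xi_n((0,u))=0)\to\mathbb{P}(\gamma((0,u))=0)=e^{-u}$. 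Combining this with the estimate of the previous paragraph gives $\mathbb{P}(y_nT_n>u)\to e^{-u}$, i.e.\ $\mathbb{P}(y_nT_n\le u)\to 1-e^{-u}$, for every $u>0$. Since $u\mapsto 1-e^{-u}$, extended by $0$ on $(-\infty,0]$, is the continuous distribution function of the exponential law with parameter $1$, this is exactly the asserted convergence $y_nT_n\overset{d}{\to}\mathrm{Exp}(1)$.

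The only point requiring any care is the comparison of the genuinely discrete event $\{y_nT_n>u\}$ with the interval event $\{\xi_n((0,u))=0\}$ in the borderline case $u/y_n\in\N$; this is precisely what the crude bound by $y_n$ disposes of, and it vanishes in the limit. Everything else is an immediate consequence of Theorem \ref{Thm-S}, Lemma \ref{lem: equiv-Wconv}, and the elementary fact that convergence in distribution of $\N_0$-valued random variables implies convergence of the masses at each point. A slightly different route would replace $(0,u)$ by the half-open interval $(0,u]$, for which $\{\xi_n((0,u])=0\}=\{y_nT_n>u\}$ holds exactly, at the cost of extending Lemma \ref{lem: equiv-Wconv} to non-open $\lambda_1$-continuity sets.
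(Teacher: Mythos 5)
Your argument is correct and is exactly the intended one: the paper states this corollary without proof as an immediate consequence of Theorem \ref{Thm-S}, the point being that $\{y_nT_n>u\}$ is (up to the boundary case $u/y_n\in\N$, which you correctly dispose of with the crude bound $y_n\to0$) the void event $\{\xi_n((0,u))=0\}$, whose probability converges to $e^{-u}$ by the Poisson process convergence and Lemma \ref{lem: equiv-Wconv}. Your handling of the discretization mismatch and of the possibility $T_n=\infty$ is careful and complete; nothing is missing.
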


Clearly, in the case when the Bernoulli random variables $(X_i^{(n)})_{i\in\N} $ are i.i.d.\ with parameter $p_n>0$,  if $p_n$ converges to $0$, the assumptions of  Theorem \ref{Thm-S} are fulfilled with $f(n)\equiv 1$, and so $\xi_n$ converges in distribution to a Poisson  process. Other conditions for weak convergence are given in the following corollary.

\begin{corollary}\label{Cor:strong-cond-run-conv}
	Let $\xi_n$, $n\in\N$,  be the sequence of  point processes given by \eqref{cons-succ}.  Let us assume that $f(n)y_n\rightarrow 0$ and
	\begin{align*}
	\lim_{n\rightarrow\infty} \sup_{i\in \N} \, y_n^{-1} \sum_{j\in\N\, :\,  1\leq \vert i-j\vert \leq f(n) +k-2} \mathbb{E}[I^{(n)}_i I^{(n)}_j]=0 .
	\end{align*}
	Then $\xi_n$ converges weakly to a Poisson  process with intensity measure $\lambda_1$.
\end{corollary}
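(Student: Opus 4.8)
The plan is to deduce the corollary directly from Theorem \ref{Thm-S} by checking that its hypothesis \eqref{eq:condition-on-sum} is implied by the condition assumed here. The assumption $f(n)y_n\to 0$ appears in both statements, so the only thing to verify is
$$
\lim_{n\rightarrow\infty}\sup_{i\in\N} y_n^{-1}\,\mathbb{E}\big[I_i^{(n)}\mathbf{1}\{W_i^{(n)}>0\}\big]=0 .
$$

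The key observation is that $W_i^{(n)}=\sum_{j\in\N:\,1\le|i-j|\le f(n)+k-2} I_j^{(n)}$ is a sum of indicators, hence a nonnegative integer-valued random variable, so $\mathbf{1}\{W_i^{(n)}>0\}\le W_i^{(n)}$ pointwise. Multiplying by $I_i^{(n)}\in\{0,1\}$ and expanding the definition of $W_i^{(n)}$ gives
$$
I_i^{(n)}\mathbf{1}\{W_i^{(n)}>0\}\le I_i^{(n)}W_i^{(n)}=\sum_{j\in\N:\,1\le|i-j|\le f(n)+k-2} I_i^{(n)}I_j^{(n)}.
$$
Taking expectations, dividing by $y_n$, and taking the supremum over $i\in\N$ yields
$$
\sup_{i\in\N} y_n^{-1}\,\mathbb{E}\big[I_i^{(n)}\mathbf{1}\{W_i^{(n)}>0\}\big]\le \sup_{i\in\N} y_n^{-1}\sum_{j\in\N:\,1\le|i-j|\le f(n)+k-2}\mathbb{E}\big[I_i^{(n)}I_j^{(n)}\big],
$$
and the right-hand side converges to $0$ as $n\to\infty$ by hypothesis. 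Hence \eqref{eq:condition-on-sum} holds, and Theorem \ref{Thm-S} gives that $\xi_n$ converges weakly to a Poisson process with intensity measure $\lambda_1$.

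There is no real obstacle here: the argument is a one-line first-moment (union) bound, replacing the indicator $\mathbf{1}\{W_i^{(n)}>0\}$ by $W_i^{(n)}$. The only point worth noting is that $\mathbb{E}[I_i^{(n)}I_j^{(n)}]$ is exactly the probability of having a $k$-head run starting at $i$ together with one starting at a nearby index $j$, so the condition in the corollary is a genuinely convenient — and slightly stronger — reformulation of \eqref{eq:condition-on-sum}, purchased at the (typically harmless) cost of the union bound.
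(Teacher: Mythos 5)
Your proposal is correct and coincides with the paper's own proof: both bound $\mathbf{1}\{W_i^{(n)}>0\}\leq W_i^{(n)}$, multiply by $I_i^{(n)}$, take expectations to reduce the hypothesis of the corollary to condition \eqref{eq:condition-on-sum}, and then invoke Theorem \ref{Thm-S}. No gaps.
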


Let us now prove the main result of this section, Theorem \ref{Thm-S}.

\begin{proof}[Proof of Theorem \ref{Thm-S}]
 For any bounded interval $A\subset [0,\infty)$, the assumptions on $X_i^{(n)}, i\in\N$, imply that
$$
\mathbb{E}[\xi_n(A)]=  y_n \sum_{i=1}^\infty \delta_{y_n i}(A)  = 
(\sup(A) y^{-1}_n+ b_n )y_n- (\inf(A) y^{-1}_n+ a_n  )y_n
$$
for some $a_{n}, b_{n}\in [-1,1]$. By $y_n\to0$, we have $\mathbb{E}[\xi_n(A)]\to\lambda_1(A)$ and, consequently, $\mathbb{E}[\xi_n(B)]\to\lambda_1(B)$ for all $B\in\mathcal{I}$.  Moreover, $\xi_n(B)$, $n\in\N$, is tight (see Remark \ref{rem:Tightness}).
Then, we can write $\xi_n(B)$ as
$$
\xi_n(B)= \sum_{i\in\mathcal{A}_n} I^{(n)}_i
$$
with $\mathcal{A}_n=\{i\in\N: y_n i\in B\}$.

For $i_0\in \mathcal{A}_n$, we have for any $m\in\N$ that 
\begin{align*}
& \big\vert \mathbb{E}\big[ I^{(n)}_{i_0} \mathbf{1}\big\{\xi_n(B)- I^{(n)}_{i_0}=m-1\big\}\big] 
- \mathbb{E}\big[I^{(n)}_{i_0} \mathbf{1}\big\{\xi_n(B) - W^{(n)}_{i_0} - I^{(n)}_{i_0}=m-1\big\}\big] \big\vert  \\
&\leq \mathbb{E}\big[I^{(n)}_{i_0}\mathbf{1}\{ W^{(n)}_{i_0}>0\}\big].
\end{align*}
Together with $\mathbb{E}\big[\xi_n(B)\big]=|\mathcal{A}_n| y_n$, this yields
\begin{align*}
H_n &:=\Big\vert\sum_{i \in \mathcal{A}_n } \mathbb{E}\big[ I^{(n)}_{i} \mathbf{1}\big\{\xi_n(B)- I^{(n)}_{i}=m-1\big\}\big] 
- \sum_{i \in \mathcal{A}_n }\mathbb{E}\big[I^{(n)}_{i} \mathbf{1}\big\{\xi_n(B) - W^{(n)}_{i} - I^{(n)}_{i}=m-1\big\}\big] \Big\vert \\
& \leq 
\sum_{i\in\mathcal{A}_n} \mathbb{E}\big[I^{(n)}_{i}\mathbf{1}\{ W^{(n)}_{i}>0\}\big] \leq \Big( \sup_{i\in \N} \, y_n^{-1} \mathbb{E}\big[I_i\mathbf{1}\{ W^{(n)}_{i}>0\}\big]\Big)\mathbb{E}[\xi_n(B)].
\end{align*}
Therefore from  \eqref{eq:condition-on-sum}, we obtain $H_n\to0$. From the independence of $I^{(n)}_{i_0}$ and $\xi_n(B) - W^{(n)}_{i_0} - I^{(n)}_{i_0}$ for $i_0\in\mathcal{A}_n$, it follows that 
$$
\mathbb{E}\big[ I^{(n)}_{i_0} \mathbf{1}\big\{\xi_n(B)- W^{(n)}_{i_0} - I^{(n)}_{i_0}=m-1\big\}\big] 
=\mathbb{E} \big[ I^{(n)}_{i_0} \big]\mathbb{P}\big(\xi_n(B) - W^{(n)}_{i_0} - I^{(n)}_{i_0}=m-1\big).
$$
Combining the previous arguments implies for $m\in\N$ that
\begin{align*}
&\limsup_{n\rightarrow\infty}\big\vert m\mathbb{P}(\xi_n(B) = m ) - \lambda_1(B)\mathbb{P}(\xi_n (B)
 = m-1) \big\vert 
 \\
 &=\limsup_{n\rightarrow\infty}\Big\vert \sum_{i \in \mathcal{A}_n } \mathbb{E}\big[ I^{(n)}_{i} \mathbf{1}\big\{\xi_n(B)- I^{(n)}_{i}=m-1\big\}\big]  - \lambda_1(B)\mathbb{P}(\xi_n (B)
 = m-1) \Big\vert 
 \\
 & = \limsup_{n\rightarrow\infty}\Big\vert \sum_{i \in \mathcal{A}_n }\mathbb{E}\big[I^{(n)}_{i} \mathbf{1}\big\{\xi_n(B) - W^{(n)}_i - I^{(n)}_i=m-1\big\}\big] - \mathbb{E}[\xi_n(B)]\mathbb{P}(\xi_n(B)=m-1) \Big\vert
\\
 & = \limsup_{n\rightarrow\infty}\Big\vert \sum_{i \in \mathcal{A}_n }\mathbb{E}[I^{(n)}_{i}]\mathbb{P}\big(\xi_n(B) -  W^{(n)}_i - I^{(n)}_i=m-1\big) -  \sum_{i\in\mathcal{A}_n} \mathbb{E}[I_i^{(n)}]  \mathbb{P}(\xi_n(B)=m-1) \Big\vert
\\
& \leq \limsup_{n\rightarrow\infty} \sum_{i \in \mathcal{A}_n } \mathbb{E}[I^{(n)}_i]\mathbb{P}\big( W^{(n)}_{i} + I^{(n)}_{i}>0 \big) \leq \lambda_1(B)  \limsup_{n\rightarrow\infty}  \sup_{i\in \N }\mathbb{P}\big(  W^{(n)}_{i} + I^{(n)}_{i}>0 \big).
\end{align*}
Finally, the inequality 
$$
\mathbb{P}\big(  W^{(n)}_{i} + I^{(n)}_{i}>0 \big)\leq   (2k+2f(n) - 3) y_n , \quad i\in\N,
$$
and the assumption  $f(n)y_n\rightarrow 0$ lead to  
$$
\lim_{n\rightarrow\infty}\big\vert m\mathbb{P}(\xi_n(B) = m ) - \lambda_1(B)\mathbb{P}(\xi_n (B)
= m-1) \big\vert  =0.
$$
The result follows by  applying Theorem \ref{TH}.
\end{proof}
\begin{proof}[Proof of Corollary \ref{Cor:strong-cond-run-conv}]
This follows directly from Theorem \ref{Thm-S} and
\begin{align*}
 \mathbb{E}\big[I_i \mathbf{1}\{W^{(n)}_{i}>0\} \big]  &\leq  \mathbb{E}\big[I^{(n)}_{i}W^{(n)}_{i}\big]= \sum_{ j \in \N \, :\, 1\leq  \vert i-j\vert \leq f(n) +k -2} \mathbb{E}\big[ I^{(n)}_i I^{(n)}_j\big]
\end{align*}
for any $i\in \N$.
\end{proof}

\subsection{Inradii of an inhomogeneous Poisson Voronoi tessellation}\label{inradius}
In this section, we consider  the  inradii of an inhomogeneous  Voronoi tessellation generated by a Poisson  process with a certain intensity measure $t\mu, t>0$;  recall that the inradius of a cell is  the largest radius for which the ball centered at the nucleus is contained in the cell. We study the point process on $\R$ constructed by taking  for any cell with the nucleus in a compact  set, a transform of the $\mu$-measure of the ball centered at the nucleus and with  twice the inradius as the radius. The aim is to continue the work started in \cite{MR3252817} by extending the  result on the largest inradius to  inhomogeneous  Poisson-Voronoi tessellations and  proving weak convergence of    the aforementioned  point process to a Poisson process. 

For any locally finite counting measure $\nu$ on $\R^d$, we denote by $N(x,\nu)$ the Voronoi cell with nucleus $x\in\R^d$ generated by $\nu + \delta_x$, that is
$$
N(x,\nu)
=\left\{y\in\mathbb{R}^d\,:\,\Vert x-y\Vert  \leq   \Vert  y-x'\Vert ,x\neq x'\in \nu\right\},
$$
where $\Vert \cdot \Vert$ denotes the Euclidean norm. For $x\in\nu$ we have $N(x,\nu)=N(x,\nu-\delta_x)$.  Voronoi tessellations, i.e.\ tessellations consisting of Voronoi cells $N(x,\nu)$, $x\in\nu$, arise in different fields such as biology \cite{POUPON2004233}, astrophysics \cite{refId0} and communication networks \cite{MR2150719}. For more details on Poisson-Voronoi tessellations, i.e.\ Voronoi tessellations generated by an underlying Poisson  process, we refer the reader to e.g.\  \cite{MR2654678,MR1295245,MR2455326}. The inradius of the Voronoi cell $N(x,\nu)$ is given by
$$
c(x,\nu )= \sup\{R\geq0 \ : \mathbf{B}(x,R)\subset N(x,\nu)\},
$$
where $\mathbf{B}(x,r)$ denotes the open ball centered at $x\in \R^d$ with radius $r>0$.

Let $\eta_{t}$, $t>0$, be a Poisson  process on $\R^d$ with intensity measure $t \mu$, where $\mu$ is a locally finite measure on $\R^d$ with density $f: \R^d \rightarrow [0,\infty)$. Consider a compact set $W\subset\mathbb{R}^d$ with $\mu(W)=1$, and assume that there exists a bounded open set  $A\subset \R^d$ with $W\subset A$ such that  $f_{min}:=\inf_{x\in A} f(x) >0$ and $f_{max}:=\sup_{x\in A} f(x) <\infty$. For any Voronoi cell $N(x,\eta_t)$ with $x\in\eta_t,$ we take the $\mu$-measure of the ball around $x$ with twice the inradius as radius, and we define the point process  $\xi_t$ on $\R$ as
\begin{equation}\label{max-irr-xi}\begin{split}
\xi_t 
& = \xi_t(\eta_t) =\sum_{x\in\eta_t\cap W} \delta_{ t \mu(\mathbf{B}(x,2c(x,\eta_t ))) - \log(t)} .
\end{split}\end{equation}
Let $M$ be  the measure on $\R$ given by $M([u,\infty))= e^{-u}$ for $u\in\R$.
\begin{theorem}\label{thm: max inradius}
	Let $\xi_t$, $t>0$, be the family of point processes on $\R$ given by \eqref{max-irr-xi}. Then $\xi_t$ converges in distribution to a Poisson  process with intensity measure $M$. 
\end{theorem}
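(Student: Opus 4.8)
The plan is to apply Theorem \ref{real} with $S=\R$, the ring $\mathcal{U}=\mathcal{I}$ from Remark \ref{rem:Euclidean}, and $M$ the measure with $M([u,\infty))=e^{-u}$, which is non-atomic, locally finite (it assigns finite mass to any set bounded from below) and absolutely continuous. Fix a bounded open interval $B=(a,b)$ (the general case of finite unions of intervals follows by the same argument applied componentwise, since the relevant events are determined by disjoint $\mu$-measure ranges). Here $\zeta_t=\eta_t$, $U_t=W$, $K_t=t\mu$, $\hat\zeta_t=\eta_t$, and $g_t(x,\nu)=t\mu(\mathbf{B}(x,2c(x,\nu)))-\log t$. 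First I would verify the intensity convergence \eqref{pplnew1}: by the Mecke formula $M_t(B)=\int_W \mathbb{P}\big(g_t(x,\eta_t+\delta_x)\in B\big)\,t\,d\mu(x)=t\int_W \mathbb{P}\big(t\mu(\mathbf{B}(x,2c(x,\eta_t)))-\log t\in(a,b)\big)\,d\mu(x)$. The key computation is that for a single cell, $\mathbf{B}(x,2c(x,\eta_t))\subset N(x,\eta_t+\delta_x)$ is empty of points of $\eta_t$ precisely when no point of $\eta_t$ lies in the ball $\mathbf{B}(x,2c)$; more precisely, $c(x,\eta_t)\ge r$ iff $\eta_t(\mathbf{B}(x,2r))=0$ (a standard fact: the inradius is at least $r$ iff the ball of radius $2r$ contains no other nucleus). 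Hence $\mathbb{P}(t\mu(\mathbf{B}(x,2c(x,\eta_t)))\ge s)=\mathbb{P}(\eta_t(\mathbf{B}(x,2c))=0 \text{ for the }c\text{ with }t\mu(\mathbf{B}(x,2c))=s)=e^{-s}$, using that $t\mu(\mathbf{B}(x,2\cdot))$ is continuous and unbounded in the radius (which holds because $f\ge f_{min}>0$ on a neighborhood and $\mu$ is locally finite; one must check $\mu(\mathbf{B}(x,2r))\to\infty$, which follows since $\mu(\R^d)=\infty$ — or restrict attention to $r$ large, noting the event forces the radius to stay bounded for $x\in W$, $t$ large). Thus the single-cell tail is exactly $e^{-s}$, giving $\mathbb{P}(g_t(x,\eta_t+\delta_x)\in(a,b))=e^{-(a+\log t)}-e^{-(b+\log t)}=t^{-1}(e^{-a}-e^{-b})$, so $M_t(B)=\mu(W)(e^{-a}-e^{-b})=e^{-a}-e^{-b}=M(B)$ exactly, for all $t$ large enough that the radii involved stay inside the region where $f$ is controlled.

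The core of the proof is then the local-dependence estimate \eqref{pplnew}. I would use the heuristic spelled out after Theorem \ref{real}: the statistic $g_t(x,\eta_t)$ attached to the cell at $x$ is, with overwhelming probability, determined by $\eta_t$ restricted to a small ball $A_{t,x}=\mathbf{B}(x,\rho_t)$ whose radius $\rho_t$ is of order $(\log t/(t f_{min}))^{1/d}$ — large enough that $t\mu(\mathbf{B}(x,2c(x,\eta_t)))\approx\log t$ forces $2c(x,\eta_t)\lesssim \rho_t$, small enough that $\rho_t\to 0$. The exact local dependence \eqref{eqn:local_dependence} does not hold on the nose, but the approximation \eqref{eqn:approximation} does: replacing $\eta_t$ by $\eta_t|_{A_{t,x}^c}$ changes the event $\{\xi_t(B)=m\}$ only on the event that some cell's inradius-ball ``reaches across'' the boundary of $A_{t,x}$, and both $\mathbf{1}\{g_t(x,\eta_t+\delta_x)\in B\}$ and this bad event have probability $O(t^{-1})$ while the bad event additionally forces two points/cells to interact over distance $\sim\rho_t\to 0$. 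After conditioning and using the independence of $\eta_t|_{A_{t,x}}$ and $\eta_t|_{A_{t,x}^c}$, the integral in \eqref{pplnew} factors as $\mathbb{P}(\xi_t(B)=m)\,M_t(B)$ plus an error, and the error is bounded by $t\int_W \mathbb{E}\big[\mathbf{1}\{g_t(x,\eta_t+\delta_x)\in B\}\cdot(\text{indicator of boundary interaction})\big]d\mu(x)$, which I would bound by a Mecke/Palm computation showing it is $O(\mu(\mathbf{B}(x,\rho_t)))\cdot M_t(B)\to 0$ since $\rho_t\to0$ and $\mu$ has a bounded density near $W$.

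The main obstacle — and where I would spend the most care — is making the ``boundary interaction'' error term rigorous and quantitatively small. Concretely one must show: (a) for $x\in W$ and $t$ large, conditionally on $g_t(x,\eta_t+\delta_x)\in B$ the inradius $c(x,\eta_t)$ satisfies $2c(x,\eta_t)\le \rho_t$ except on an event of probability $o(1)$ relative to $M_t(B)$, so that $A_{t,x}=\mathbf{B}(x,\rho_t)$ genuinely captures the statistic at $x$; (b) for \emph{every other} cell contributing to $\xi_t(B)$, its own inradius-ball has $\mu$-measure $\approx t^{-1}\log t$, hence radius $\lesssim\rho_t$, so removing $\eta_t|_{A_{t,x}}$ can only affect cells with nuclei within distance $\sim\rho_t$ of $\mathbf{B}(x,\rho_t)$, a set of $\mu$-measure $O(\rho_t^d)$; and the expected number of $\eta_t$-points there whose cell lands in $B$ is $O(t\rho_t^d\cdot t^{-1})=O(\rho_t^d)\to 0$. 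Steps (a) and (b) both rely on the two-sided density bound $0<f_{min}\le f\le f_{max}<\infty$ on the fixed neighborhood $A\supset W$, which ensures $\mu(\mathbf{B}(x,r))\asymp r^d$ uniformly for $x\in W$ and $r\le\operatorname{diam}(A)$; one must also choose $\rho_t$ and handle the finitely many $x$ near $\partial W$ for which $\mathbf{B}(x,\rho_t)\not\subset A$ by a crude bound, but since $W$ is compact and $\rho_t\to 0$ this is a lower-order effect. Assembling these estimates, \eqref{pplnew} holds for all $B\in\mathcal{I}$ and $m\in\N_0$, and Theorem \ref{real} then yields the claimed Poisson process convergence.
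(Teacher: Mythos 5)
Your overall strategy is the one the paper follows: apply Theorem \ref{real}, compute $M_t$ exactly via the void probability $c(x,\nu)\geq r \Leftrightarrow \nu(\mathbf{B}(x,2r))=0$, and then decouple the single-cell indicator from the count $\xi_t(B)$ by restricting $\eta_t$ to the complement of a small ball around $x$. The intensity computation and the reduction are fine. The gap is in your step (b), which is exactly the delicate point. You bound the expected number of nuclei $y$ near $x$ whose statistic lands in $B$ by $O(t\rho_t^d\cdot t^{-1})$, i.e.\ you charge each such $y$ a probability $O(t^{-1})$ of landing in $B$. That is correct for the \emph{original} process $\eta_t$, but the error term you must control is the symmetric difference between the counts under $\eta_t$ and under the \emph{restricted} process $\eta_t|_{A_{t,x}^c}$, and for the restricted process the per-point probability is not $O(t^{-1})$. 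Removing the points of $\eta_t$ in $A_{t,x}$ can only inflate the inradius of a nearby cell at $y$: the relevant void probability becomes $\exp\bigl(-t\mu(\mathbf{B}(y,2v)\setminus A_{t,x})\bigr)$, where $\mathbf{B}(y,2v)$ is the ball whose emptiness would put $y$'s statistic above $\inf B$, and this ball may overlap $A_{t,x}$ substantially. The paper handles this by observing that for $y\notin A_{t,x}$ at least half of $\mathbf{B}(y,2v)$ (in Lebesgue measure) lies outside $A_{t,x}$, so with $\tau=f_{min}/f_{max}$ one gets $t\mu(\mathbf{B}(y,2v)\setminus A_{t,x})\geq \tfrac{\tau}{2}(u_1+\log t)$ and hence a per-point probability of order $t^{-\tau/2}$, not $t^{-1}$. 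The resulting error is $O(t\rho_t^d\, t^{-\tau/2})=O(\log t\cdot t^{-\tau/2})$, which still vanishes, but your claimed rate $O(\rho_t^d)$ and the reasoning behind it are not correct, and without the half-ball/density-ratio observation the argument does not close.

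Two smaller remarks. First, the exact local dependence \eqref{eqn:local_dependence} \emph{does} hold on the nose here (contrary to your hedge): whether $h_t(x,\nu+\delta_x)\in B$ is determined by $\nu$ restricted to $\mathbf{B}(x,2v_t(x,\sup B))$, since the event is equivalent to prescribed void/non-void conditions on two concentric balls of radius at most $2v_t(x,\sup B)\leq \rho_t$ for $t$ large (Lemma \ref{lm : exist and bound of v}); the paper exploits this to get an \emph{exact} factorization of the void part of the single-cell event and only needs the approximation for the count. Second, your treatment of a general $B\in\mathcal{I}$ ``componentwise'' needs a little care, since the count event $\{\xi_t(B)=m\}$ does not decompose over components; the paper instead splits only the single-cell indicator over the components $(u_{2j-1},u_{2j})$ and telescopes, which is the clean way to do it. Neither of these is fatal, but the missing $t^{-\tau/2}$ estimate is the substantive omission.
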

The next theorem shows that if the density function $f$ is H\"older continuous,   it is possible to take out the factor $2$ from $\mu(\mathbf{B}(x,2c(x,\eta_t )))$ and to consider $2^d\mu(\mathbf{B}(x, c(x,\eta_t )))$. Recall that a function $h:\R^d\to\R$ is H\"older continuous with exponent $b>0$ if there exists a constant $C>0$ such that
$$
|h(x)-h(y)| \leq C \|x-y\|^b
$$
for all $x,y\in\R^d$.
We define the point process $\widehat{\xi}_t$ as 
$$
\widehat{\xi}_t =\widehat{\xi}_t(\eta_t) = \sum_{x\in\eta_t\cap W} \delta_{2^d t \mu (\mathbf{B}(x, c(x,\eta_t)))- \log (t)}.
$$
\begin{theorem}\label{Thm:inr-cont-dens}
Let $f$ be H\"older continuous. Then $\widehat{\xi}_t, t>0,$   converges in distribution to a Poisson  process with intensity measure $M$.
\end{theorem}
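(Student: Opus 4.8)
The plan is to deduce Theorem~\ref{Thm:inr-cont-dens} from Theorem~\ref{thm: max inradius} by coupling the two point processes through the common Poisson process $\eta_t$. Each of $\xi_t$ and $\widehat{\xi}_t$ carries exactly one atom per point $x\in\eta_t\cap W$: at $p_t(x):=t\mu(\mathbf{B}(x,2c(x,\eta_t)))-\log t$ for $\xi_t$, and at $\widehat{p}_t(x):=2^d t\mu(\mathbf{B}(x,c(x,\eta_t)))-\log t$ for $\widehat{\xi}_t$. First I would show that for atoms lying in a fixed bounded window the displacement $|p_t(x)-\widehat{p}_t(x)|$ tends to $0$ uniformly as $t\to\infty$, and then conclude by sandwiching $\widehat{\xi}_t$ between a slightly shrunk and a slightly enlarged copy of $\xi_t$, using Lemma~\ref{lem: equiv-Wconv} and the non-atomicity of $M$.

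The quantitative heart is a single Hölder estimate. For $x\in\R^d$ and $r>0$, the substitution $y=x+\tfrac{w-x}{2}$ gives $2^d\mu(\mathbf{B}(x,r))=\int_{\mathbf{B}(x,2r)}f\big(x+\tfrac{w-x}{2}\big)\,dw$, whence
\begin{equation*}
\mu(\mathbf{B}(x,2r))-2^d\mu(\mathbf{B}(x,r))=\int_{\mathbf{B}(x,2r)}\Big(f(w)-f\big(x+\tfrac{w-x}{2}\big)\Big)\,dw .
\end{equation*}
As $\big\|w-\big(x+\tfrac{w-x}{2}\big)\big\|=\tfrac12\|w-x\|\le r$ on $\mathbf{B}(x,2r)$, Hölder continuity of $f$ with exponent $b$ and constant $C_0$ yields $\big|\mu(\mathbf{B}(x,2r))-2^d\mu(\mathbf{B}(x,r))\big|\le C_0\,r^{b}\,\Vol(\mathbf{B}(x,2r))=C_0\,2^d\,\kappa_d\,r^{d+b}$, where $\kappa_d:=\Vol(\mathbf{B}(0,1))$; only the global Hölder bound is used.

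Now fix a finite union $I$ of bounded open intervals, say $I\subseteq(-K,K)$, and $\varepsilon>0$. If $x\in\eta_t\cap W$ satisfies $p_t(x)\in I$ or $\widehat{p}_t(x)\in I$, then $t\,\mu(\mathbf{B}(x,c(x,\eta_t)))\le t\,\mu(\mathbf{B}(x,2c(x,\eta_t)))\le K+\log t$, so $\mu(\mathbf{B}(x,c(x,\eta_t)))\le C_1\log t/t$ for $t\ge e$, with $C_1:=K+1$. Since $\dist(W,A^c)>0$ and $\mathbf{B}(z,\dist(W,A^c))\subseteq A$ for all $z\in W$, for $t$ large this is smaller than $f_{min}\kappa_d\dist(W,A^c)^d\le\mu(\mathbf{B}(x,\dist(W,A^c)))$, forcing $c(x,\eta_t)<\dist(W,A^c)$; hence $\mathbf{B}(x,c(x,\eta_t))\subseteq A$ and $f_{min}\kappa_d\,c(x,\eta_t)^d\le\mu(\mathbf{B}(x,c(x,\eta_t)))\le C_1\log t/t$, i.e.\ $c(x,\eta_t)^d\le C_2\log t/t$. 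Plugging this into the Hölder estimate,
\begin{align*}
|p_t(x)-\widehat{p}_t(x)|&=t\,\big|\mu(\mathbf{B}(x,2c(x,\eta_t)))-2^d\mu(\mathbf{B}(x,c(x,\eta_t)))\big|\\
&\le C_0\,2^d\,\kappa_d\,t\,c(x,\eta_t)^{d+b}\le C_3\,(\log t)^{1+b/d}\,t^{-b/d},
\end{align*}
which tends to $0$ as $t\to\infty$. Hence there is $t_0=t_0(I,\varepsilon)$ so that for $t\ge t_0$ this displacement is $<\varepsilon$ whenever $p_t(x)\in I$ or $\widehat{p}_t(x)\in I$; writing $I^{-\varepsilon}$, $I^{+\varepsilon}$ for $I$ with every interval shrunk resp.\ enlarged by $\varepsilon$ (both finite unions of bounded open intervals for $\varepsilon$ small), this gives $\xi_t(I^{-\varepsilon})\le\widehat{\xi}_t(I)\le\xi_t(I^{+\varepsilon})$ for all $t\ge t_0$.

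By Theorem~\ref{thm: max inradius} and Lemma~\ref{lem: equiv-Wconv}, $\xi_t(I^{\pm\varepsilon})\xrightarrow{d}\gamma(I^{\pm\varepsilon})$ for a Poisson process $\gamma$ with intensity $M$, so the sandwich yields $\mathbb{P}(\gamma(I^{-\varepsilon})\ge m)\le\liminf_{t\to\infty}\mathbb{P}(\widehat{\xi}_t(I)\ge m)\le\limsup_{t\to\infty}\mathbb{P}(\widehat{\xi}_t(I)\ge m)\le\mathbb{P}(\gamma(I^{+\varepsilon})\ge m)$ for all $m\in\N$. As $M$ has Lebesgue density $e^{-u}$ and is therefore non-atomic, $\gamma$ almost surely charges no point of $\partial I$, so letting $\varepsilon\downarrow0$ both bounds converge to $\mathbb{P}(\gamma(I)\ge m)$; thus $\widehat{\xi}_t(I)\xrightarrow{d}\gamma(I)$ for every finite union $I$ of bounded open intervals, and Lemma~\ref{lem: equiv-Wconv} (applied with the ring $\mathcal{R}^1$, see Remark~\ref{rem:Euclidean}) gives $\widehat{\xi}_t\xrightarrow{d}\gamma$. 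The step requiring the most care is the inradius bound: one must make sure the cells currently producing an atom near $I$ have tiny inradius — which relies on $f\ge f_{min}$ and hence on verifying that those small balls lie inside $A$ — so that the $O((\log t)^{1+b/d}t^{-b/d})$ displacement overwhelms the $t$-independent width $\varepsilon$ of the sandwich layer.
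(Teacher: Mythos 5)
Your proposal is correct, and it reaches Theorem \ref{Thm:inr-cont-dens} by a genuinely different comparison mechanism than the paper. Both arguments reduce the statement to Theorem \ref{thm: max inradius} via Lemma \ref{lem: equiv-Wconv} and Remark \ref{rem:Euclidean}, and both ultimately rest on the same H\"older estimate (your bound $|\mu(\mathbf{B}(x,2r))-2^d\mu(\mathbf{B}(x,r))|\leq C_0 2^d\kappa_d r^{d+b}$ is the substance of the paper's inequality \eqref{eqn: diff-mu v- mu q}, which the paper phrases through the inverse radii $v_t(x,u)$ and $q_t(x,u)$ of Lemma \ref{lm : exist and bound of v}). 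The difference is in how the two point processes are compared: the paper proves $\mathbb{E}[|\xi_t(B)-\widehat{\xi}_t(B)|]\to 0$ for each $B\in\mathcal{I}$ by splitting $B$ into half-lines and applying the Mecke formula to the event that exactly one of $\eta_t(\mathbf{B}(x,2v_t(x,u)))=0$ and $\eta_t(\mathbf{B}(x,2q_t(x,u)))=0$ occurs, whereas you establish a \emph{deterministic, uniform} displacement bound $|p_t(x)-\widehat{p}_t(x)|\leq C_3(\log t)^{1+b/d}t^{-b/d}$ for every atom landing near a fixed window, and then sandwich $\widehat{\xi}_t(I)$ between $\xi_t(I^{-\varepsilon})$ and $\xi_t(I^{+\varepsilon})$, using non-atomicity of $M$ to close the $\varepsilon$-gap. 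Your route avoids the Mecke computation and the separate treatment of half-closed intervals, and the care you take in forcing $\mathbf{B}(x,c(x,\eta_t))\subseteq A$ before invoking $f\geq f_{\min}$ is exactly the point that needs checking; the paper's $L^1$ route has the side benefit that the resulting limit \eqref{eqn:limL1-inr} is reused verbatim in the proof of Corollary \ref{cor:maxInr}, so if you adopt the sandwich argument you would need a one-line extra remark (e.g.\ that your displacement bound also gives $|\mathbb{P}(\xi_t((u,\infty))>0)-\mathbb{P}(\widehat{\xi}_t((u,\infty))>0)|\to 0$) to recover \eqref{eq:max-inr-contf}.
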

As a corollary of the previous theorems, we have the following generalization to the inhomogeneous case of the result obtained in \cite[Theorem 1, Equation (2a)]{MR3252817}  for the stationary case; see also \cite[Section 5]{Chenavier2014} for the maximal inradius of a stationary Poisson-Voronoi tessellation and of a stationary Gauss-Poisson-Voronoi tessellation.  
\begin{corollary}\label{cor:maxInr}
For $u\in\R$,
\begin{align}\label{eq:max-inr}
	\lim_{t\rightarrow\infty} \mathbb{P}\big( \max_{x\in\eta_t\cap W} t \mu(\mathbf{B}(x,2c(x,\eta_t ))) - \log(t) \leq u  \big) = e^{-e^{-u}}.
\end{align}
Moreover, if $f$ is H\"older continuous, 
\begin{align}\label{eq:max-inr-contf}
	\lim_{t\rightarrow\infty} \mathbb{P}\big( \max_{x\in\eta_t\cap W} 2^d t \mu(\mathbf{B}(x,c(x,\eta_t ))) - \log(t) \leq u  \big) = e^{-e^{-u}}.
\end{align}
\end{corollary}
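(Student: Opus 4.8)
The plan is to derive the corollary from Theorem \ref{thm: max inradius} (respectively Theorem \ref{Thm:inr-cont-dens}) via the elementary identity relating a maximum to a void probability. Write $\xi_t^{\max} := \max_{x\in\eta_t\cap W}\big(t\mu(\mathbf{B}(x,2c(x,\eta_t)))-\log t\big)$, so that $\{\xi_t^{\max}\le u\}=\{\xi_t((u,\infty))=0\}$ with $\xi_t$ as in \eqref{max-irr-xi} (on the event $\eta_t(W)=0$, which has probability $e^{-t}\to0$, we may set $\xi_t^{\max}=-\infty$). Since $M$ has density $u\mapsto e^{-u}$ with respect to Lebesgue measure on $\R$, it is non-atomic and satisfies $M((a,b))=e^{-a}-e^{-b}$; hence Theorem \ref{thm: max inradius} together with Lemma \ref{lem: equiv-Wconv} gives $\xi_t((a,b))\xrightarrow{d}\gamma((a,b))$ for every bounded open interval $(a,b)\in\mathcal{I}$, where $\gamma$ is a Poisson process with intensity measure $M$, and in particular $\mathbb{P}(\xi_t((a,b))=0)\to e^{-(e^{-a}-e^{-b})}$.

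For the upper bound I would fix $v>u$ and use $\{\xi_t((u,\infty))=0\}\subseteq\{\xi_t((u,v))=0\}$ to get $\limsup_{t\to\infty}\mathbb{P}(\xi_t^{\max}\le u)\le e^{-(e^{-u}-e^{-v})}$, and then let $v\to\infty$. For the lower bound I would write, for $v>u$, $\mathbb{P}(\xi_t((u,\infty))=0)\ge\mathbb{P}(\xi_t((u,v))=0)-\mathbb{P}(\xi_t([v,\infty))>0)$ and bound the last term by $\mathbb{E}[\xi_t([v,\infty))]$ via the Markov inequality. The only non-formal point is to control this tail uniformly in $t$: I claim $\mathbb{E}[\xi_t([v,\infty))]=e^{-v}$ for all $t$ sufficiently large (depending on $v$). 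Indeed, by the Mecke formula and the fact that $2c(x,\eta_t+\delta_x)=\dist(x,\eta_t)$ for $\mu$-a.e.\ $x$,
\[
\mathbb{E}[\xi_t([v,\infty))]=t\int_W \mathbb{P}\big(t\mu(\mathbf{B}(x,\dist(x,\eta_t)))\ge v+\log t\big)\,d\mu(x),
\]
and, writing $\rho_t(x,v)$ for the radius determined by $t\mu(\mathbf{B}(x,\rho_t(x,v)))=v+\log t$, the inner probability equals the void probability $\mathbb{P}(\eta_t(\mathbf{B}(x,\rho_t(x,v)))=0)=e^{-(v+\log t)}$, provided $\mathbf{B}(x,\rho_t(x,v))\subseteq A$. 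For $x\in W$ and $t$ large the quantity $(v+\log t)/t$ is small, so $\rho_t(x,v)$ is small (bounded using $f_{\min}$), whence this inclusion holds; using $\mu(W)=1$ one obtains $\mathbb{E}[\xi_t([v,\infty))]=t\cdot e^{-(v+\log t)}=e^{-v}$. Consequently $\liminf_{t\to\infty}\mathbb{P}(\xi_t^{\max}\le u)\ge e^{-(e^{-u}-e^{-v})}-e^{-v}$, and letting $v\to\infty$ gives the matching lower bound $e^{-e^{-u}}$, proving \eqref{eq:max-inr}.

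The second statement \eqref{eq:max-inr-contf} follows by the identical scheme, with $\xi_t$ replaced by $\widehat{\xi}_t$ and Theorem \ref{thm: max inradius} by Theorem \ref{Thm:inr-cont-dens}; the only change is the tail estimate, where one must compare $2^d\mu(\mathbf{B}(x,c(x,\eta_t)))$ with $\mu(\mathbf{B}(x,2c(x,\eta_t)))$. A change of variables shows their difference is at most a constant times $\dist(x,\eta_t)^{d+b}$ when $f$ is H\"older with exponent $b$, which is of smaller order than $\mu(\mathbf{B}(x,2c(x,\eta_t)))\gtrsim f_{\min}\dist(x,\eta_t)^d$ as $\dist(x,\eta_t)\to0$; this yields $\limsup_{t\to\infty}\mathbb{E}[\widehat{\xi}_t([v,\infty))]\le C e^{-v}$ for a constant $C$ independent of $v$, which is all that is needed. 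I expect the uniform-in-$t$ tail bound to be the only real obstacle: it is what allows passing from vague convergence of $\xi_t$ on bounded intervals to convergence of the maximum over the unbounded half-line $(u,\infty)$, while everything else is the routine sandwiching of $\{\xi_t^{\max}\le u\}$ between events of the form $\{\xi_t(B)=0\}$ with $B\in\mathcal{I}$.
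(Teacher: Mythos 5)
Your argument for \eqref{eq:max-inr} is correct and is essentially the paper's own proof: the paper sandwiches $\mathbb{P}(\xi_t((u,\infty))>0)$ between $\mathbb{P}(\xi_t((u,u_0))>0)$ and $\mathbb{P}(\xi_t((u,u_0))>0)+\mathbb{E}[\xi_t([u_0,\infty))]$, uses the exact identity $\mathbb{E}[\xi_t([u_0,\infty))]=e^{-u_0}$ from \eqref{int-mea-xi} (your Mecke-plus-void-probability computation is precisely that display), invokes Theorem \ref{thm: max inradius} for the bounded interval, and lets $u_0\to\infty$. The only place you diverge is \eqref{eq:max-inr-contf}: the paper does not rerun the sandwich for $\widehat{\xi}_t$, but instead transfers \eqref{eq:max-inr} directly via
$\big|\mathbb{P}(\xi_t((u,\infty))>0)-\mathbb{P}(\widehat{\xi}_t((u,\infty))>0)\big|\le \mathbb{E}[\vert \xi_t((u,\infty))-\widehat{\xi}_t((u,\infty))\vert]$,
the right-hand side being exactly the quantity \eqref{eqn:limL1-inr} already shown to vanish in the proof of Theorem \ref{Thm:inr-cont-dens}. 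Your route --- redoing the tail estimate for $\widehat{\xi}_t$ --- also works: by Lemma \ref{lm : exist and bound of v} one has $\mathbb{E}[\widehat{\xi}_t([v,\infty))]=t\int_W e^{-t\mu(\mathbf{B}(x,2q_t(x,v)))}f(x)\,dx$, and the bound \eqref{eqn: bnd mu q} (which encodes the H\"older comparison you sketch) gives $\limsup_{t\to\infty}\mathbb{E}[\widehat{\xi}_t([v,\infty))]\le e^{-v}$. The paper's version is marginally more economical because it recycles an estimate already proved and needs no new tail control for $\widehat{\xi}_t$; yours is self-contained but duplicates part of the work of Theorem \ref{Thm:inr-cont-dens}. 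Either way the proof is complete.
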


For an underlying binomial point process, \eqref{eq:max-inr} was shown under similar assumptions in \cite{GyoerfiHenzeWalk2019}. The related problem of maximal weighted $r$-th nearest neighbor distances for the points of a binomial point process was studied in \cite{Henze1982}; see also \cite{Henze1983}.

For the proofs of Theorem \ref{thm: max inradius} and Theorem \ref{Thm:inr-cont-dens}, we will use the quantities $v_t(x,u)$ and $q_t(x,u)$, which are introduced in the next lemma.

\begin{lemma}\label{lm : exist and bound of v}
For any $u\in\R$ there exists a $t_0 >0$ such that for all $x\in W$ and $t>t_0$ the equations
\begin{align}\label{eq: def v}
 t \mu(\mathbf{B}(x, 2v_t(x,u)))= u + \log (t) \quad
\text{ and }
\quad
 2^dt \mu(\mathbf{B}(x, q_t(x,u)))= u + \log (t)
\end{align}	
have  unique solutions $v_t(x,u)$ and $q_t(x,u)$, respectively, which satisfy 
\begin{align}\label{eq: bnd v}
	\max\{v_t(x,u), q_t(x,u)\}\leq \Big(\frac{ u + \log (t)}{ 2^d f_{min} k_d t }\Big)^{1/d},
\end{align}
where $k_d$ is the volume of the $d$-dimensional unit ball.
\end{lemma}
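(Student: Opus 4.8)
The plan is to prove existence, uniqueness, and the stated bound for the solutions of the two defining equations by a standard monotonicity/continuity argument applied to the map $r\mapsto \mu(\mathbf{B}(x,r))$, uniformly in $x\in W$. First I would observe that, since $W$ is compact and contained in the bounded open set $A$ on which $f$ is bounded above and below by positive constants, there exists $\rho_0>0$ such that $\mathbf{B}(x,\rho_0)\subset A$ for every $x\in W$. For $0\le r\le\rho_0$ the density bounds give the two-sided estimate
\begin{equation}\label{eq:mu-ball-bounds}
f_{min}\,k_d\,r^d \;\le\; \mu(\mathbf{B}(x,r)) \;=\; \int_{\mathbf{B}(x,r)} f(y)\,\dint y \;\le\; f_{max}\,k_d\,r^d,
\end{equation}
where $k_d$ is the volume of the $d$-dimensional unit ball. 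In particular $r\mapsto\mu(\mathbf{B}(x,r))$ is continuous (by dominated convergence, using local finiteness of $\mu$), strictly increasing in $r$ on $(0,\infty)$ as long as $f>0$ in an annulus — which holds on $[0,\rho_0]$ — and tends to $\infty$ as $r\to\infty$ because $\mu$ has a positive density on $A$ and is a genuine (non-finite-on-the-ball-scale) integral; more simply, on $[0,\rho_0]$ it already ranges over $[0,\ f_{min}k_d\rho_0^d]$, continuously and strictly monotonically.

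Next I would handle the right-hand sides $u+\log t$. Fix $u\in\R$. For $t$ large enough, namely once $\log t > -u$, we have $u+\log t>0$, and we want $t_0$ large enough that the target value lies in the range covered by $r\in[0,\rho_0]$; by the left inequality in \eqref{eq:mu-ball-bounds} it suffices that the candidate radius on the right-hand side of \eqref{eq: bnd v} does not exceed $\rho_0$. Concretely, choose $t_0>e^{-u}$ large enough that
\begin{equation}\label{eq:t0-choice}
\Big(\frac{u+\log t}{2^d f_{min} k_d t}\Big)^{1/d}\le\rho_0 \qquad\text{for all }t>t_0,
\end{equation}
which is possible since the left-hand side tends to $0$ as $t\to\infty$. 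Then for every $x\in W$ and $t>t_0$, the continuous strictly increasing function $r\mapsto 2^d t\,\mu(\mathbf{B}(x,r))$ maps $[0,\rho_0]$ onto an interval containing $[0,\ 2^d t f_{min}k_d\rho_0^d]\supseteq\{u+\log t\}$ (the inclusion of the point using \eqref{eq:t0-choice} and the left bound in \eqref{eq:mu-ball-bounds}); hence there is a unique $q_t(x,u)\in[0,\rho_0]$ with $2^d t\,\mu(\mathbf{B}(x,q_t(x,u)))=u+\log t$, and uniqueness among all $r\ge0$ follows from strict monotonicity on all of $(0,\infty)$ wherever $f$ stays positive — or, to avoid needing global positivity of $f$, from the observation that any solution must satisfy $\mu(\mathbf{B}(x,r))=(u+\log t)/(2^d t)\le f_{min}k_d\rho_0^d$, hence $r\le\rho_0$ by the left bound in \eqref{eq:mu-ball-bounds}, placing it back in the regime of strict monotonicity. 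The identical argument with $r\mapsto t\,\mu(\mathbf{B}(x,2r))$ in place of $r\mapsto 2^d t\,\mu(\mathbf{B}(x,r))$ yields the unique solution $v_t(x,u)$; note $\mu(\mathbf{B}(x,2v_t(x,u)))=(u+\log t)/t$, so $2v_t(x,u)\le\rho_0$ provided we also require $t_0$ large enough that $((u+\log t)/(f_{min}k_d t))^{1/d}\le\rho_0/2$, which is again eventually automatic and can be folded into the choice of $t_0$ (taking the larger of the two thresholds).

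Finally, the bound \eqref{eq: bnd v} follows immediately: applying the left inequality of \eqref{eq:mu-ball-bounds} at radius $2v_t(x,u)$ gives $f_{min}k_d(2v_t(x,u))^d=2^d f_{min}k_d v_t(x,u)^d\le\mu(\mathbf{B}(x,2v_t(x,u)))=(u+\log t)/t$, hence $v_t(x,u)\le((u+\log t)/(2^d f_{min}k_d t))^{1/d}$; and at radius $q_t(x,u)$ it gives $f_{min}k_d q_t(x,u)^d\le\mu(\mathbf{B}(x,q_t(x,u)))=(u+\log t)/(2^d t)$, i.e.\ the same bound for $q_t(x,u)$. Taking the maximum yields \eqref{eq: bnd v}. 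I do not anticipate a serious obstacle here; the only mild subtlety is being careful that the two defining equations reference $\mu$ of a ball of radius $2v_t$ versus $q_t$, so that the uniform-in-$x$ threshold $t_0$ must be chosen to accommodate both the factor-$2$-in-the-radius normalization and the factor-$2^d$-in-front normalization, and that strict monotonicity of $r\mapsto\mu(\mathbf{B}(x,r))$ is only guaranteed on the range where the ball stays inside $A$ — which is exactly why one first confines the discussion to radii $\le\rho_0$ via the a priori bound before invoking uniqueness.
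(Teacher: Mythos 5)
Your proposal is correct and follows essentially the same route as the paper: confine attention to radii small enough that the ball stays in $A$ (your $\rho_0$ is the paper's $m=\inf\{\Vert x-y\Vert : x\in\partial W,\ y\in\partial A\}$), use continuity and strict monotonicity of $r\mapsto\mu(\mathbf{B}(x,r))$ there together with the intermediate value theorem after choosing $t_0$ so that $(u+\log t)/t$ falls below the value attained at the boundary radius, and then read off \eqref{eq: bnd v} from the lower density bound $\mu(\mathbf{B}(x,r))\ge f_{min}k_d r^d$. Your explicit remark that any solution is a priori forced back into the monotonicity regime is a slightly more careful handling of global uniqueness than the paper spells out, but it is the same argument.
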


\begin{proof}
Let $u\in\R$ be fixed and set $m = \inf\{ \Vert x-y \Vert \, : \, x\in \partial W, y \in\partial A\}\in (0,\infty)$. Note that $B(x,m)\subset A$ for all $x\in W$. Choose $t_0>0$ such that
$$
\frac{u + \log(t)}{t} < f_{min} k_d m^d
$$
for $t>t_0$. For $x\in W$ and $t>t_0$ this implies that
$$
2^dt \mu(\mathbf{B}(x, m))\geq t \mu(\mathbf{B}(x, m)) \geq t  f_{min} k_d m^d > u + \log(t)
$$
and, obviously, $t \mu(\mathbf{B}(x, 0))=0$. Since the function $[0,m]\ni a\rightarrow \mu(\mathbf{B}(x,a))$ is continuous and strictly increasing because of $f_{min}>0$, by the intermediate value theorem, the equations in \eqref{eq: def v} have unique solutions $v_t(x,u)$ and $q_t(x,u)$. Since $\max\{2v_t(x,u), q_t(x,u)\} < m$, we obtain
$$
\frac{u + \log (t) }{t}= \mu(\mathbf{B}(x, 2v_t(x,u))) \geq 2^d f_{min} k_d v_t(x,u)^d
$$
and
$$
\frac{u + \log (t) }{t}= 2^d\mu(\mathbf{B}(x, q_t(x,u))) \geq 2^d f_{min} k_d q_t(x,u)^d,
$$
which prove \eqref{eq: bnd v}.
\end{proof}

Let $M_t$ be  the intensity measure of $\xi_t$. Then from the Mecke formula and Lemma \ref{lm : exist and bound of v} it follows that for any $u\in\R$ there exists a $t_0>0$ such that for $t>t_0$,
\begin{align}
M_t([u,\infty)) 
& =  t \int_W \mathbb{P}\big(t \mu(\mathbf{B}(x,2c(x,\eta_t + \delta_x ))) \geq u+ \log(t)\big) f(x)dx 
\notag
\\
&= t \int_W \mathbb{P}(c(x,\eta_t + \delta_x) \geq v_t(x,u)) f(x) dx
\notag
\\
&=   t \int_W \mathbb{P}\big( \eta_t(\textbf{B}(x, 2v_t(x,u)))=0\big) f(x) dx
\notag
\\
& = t \int_W e^{-t\mu(\textbf{B}(x, 2v_t(x,u)))} f(x)dx = t e^{-u-\log(t)} \mu(W) 
= e^{-u} =M([u,\infty)) ,
\label{int-mea-xi}
\end{align}
 where we used \eqref{eq: def v} and $\mu(W)=1$ in the last steps. For any $y\in\R$ and point configuration $\nu$ on $\R^d$ with $y\in\nu$, we denote by $h_t(y, \nu)$ the quantity
\begin{align}\label{h_t-sec-1.3.3}
h_t(y, \nu) 
=   t \mu(\mathbf{B}(y,2c(y,\nu))) - \log(t),
\end{align}
where $c(y,\nu)$ is the inradius of the Voronoi cell with nucleus $y$ generated by $\nu$. So we can rewrite $\xi_t$ as
$$
\xi_t
=\sum_{x\in\eta_t\cap W} \delta_{h_t(x,\eta_t)}.
$$
\begin{proof}[Proof of Theorem \ref{thm: max inradius}]
	From  Theorem \ref{real} and  \eqref{int-mea-xi} it follows that it is enough to show that  
	\begin{equation}\label{prfthciin}\begin{split}
	& \underset{t\rightarrow\infty}{\lim} \,t \int_{W}\mathbb{E}\Big[\mathbf{1}\{h_t(x,\eta_t + \delta_x)\in B\}\mathbf{1}\Big\{\sum_{y\in\eta_{t}\cap W} \delta_{h_t(y,\eta_{t}+\delta_x)}(B)=m\Big\}\Big] f(x)dx
	\\
	& \qquad -M(B)\mathbb{P}(\xi_t(B)=m) = 0 
	\end{split}\end{equation} 
	for any $m\in\N_0$ and $B\in\mathcal{I}$. Let $B= \bigcup_{j=1}^{\ell} (u_{2j-1}, u_{2j})$ with $u_1< u_2 < \dots < u_{2\ell}$ and $\ell\in \N$. By Lemma \ref{lm : exist and bound of v} there is a $t_0>0$ such that $v_t(x,u_k)$ exists for all $k=1,\dots, 2\ell,$ $x\in W$ and $t>t_0$. Assume $t>t_0$ in the following. Elementary arguments imply that 
	\begin{align}
	& t \int_{W} \mathbb{E}\Big[ \mathbf{1}\{h_t(x,\eta_t + \delta_x)\in B\}\mathbf{1}\Big\{\sum_{y\in\eta_t\cap W} \delta_{h_t(y,\eta_t +\delta_x)}(B)=m \Big\} \Big] f(x)d x
	\notag
	\\
	& = \sum_{j=1}^\ell t \int_{W} \mathbb{E}\Big[ \mathbf{1}\{h_t(x,\eta_t + \delta_x)\in (u_{2j-1}, u_{2j})\}\mathbf{1}\Big\{\sum_{y\in\eta_t \cap W} \delta_{h_t(y,\eta_t +\delta_x)}(B)=m \Big\} \Big] f(x)d x.
	\label{sum-step0}
	\end{align}
    For each $k=1,\dots, 2\ell$, set $w_{k,t,x} = 2v_t(x,u_{k})$. Since $h_t(x,\eta_t + \delta_x)\in (u_{2j-1},u_{2j})$ if and only if $c(x,\eta_t + \delta_x) \in ( v_t(x,u_{2j-1}),v_t(x,u_{2j}) ),$ or equivalently, $\eta_t(\textbf{B}(x, w_{2j-1,t,x}))=0$ and $\eta_t(\textbf{B}(x, w_{2j,t,x}))>0$, we obtain that 
   	\begin{align*}
	S_j :& =  t \int_{W} \mathbb{E}\Big[ \mathbf{1}\{h_t(x,\eta_t + \delta_x)\in (u_{2j-1}, u_{2j})\}\mathbf{1}\Big\{\sum_{y\in\eta_t \cap W} \delta_{h_t(y,\eta_t +\delta_x)}(B)=m \Big\} \Big] f(x) d x
	\notag
	\\
	& = t \int_{W} \mathbb{E}\Big[\mathbf{1} \left\{ \eta_t\big(\textbf{B}(x, w_{2j-1,t,x})\big)=0\right\}
	\notag \\
	& \hspace{2.5cm} \times \mathbf{1}\Big\{\eta_t\big(\textbf{B}(x, w_{2j,t,x})\big)>0, \sum_{y\in\eta_t \cap W} \delta_{h_t(y,\eta_t +\delta_x)}(B)=m \Big\}\Big] f(x) d x.
	\end{align*}
	For any point configuration $\nu$ on $\R^d$ and $x\in W$, let $\xi_{t,x}(\nu)$ be the counting measure given by $\xi_{t,x}(\nu)=\sum_{y\in\nu \cap W} \delta_{h_t(y,\nu + \delta_x)}$ so that
	\begin{align*}
	S_j & =  t \int_{W} \mathbb{E}\big[\mathbf{1} \left\{ \eta_t\big(\textbf{B}(x, w_{2j-1,t,x})\big)=0\right\}
	\\
	& \hspace{2.5cm}\times\mathbf{1}\left\{\eta_t\big(\textbf{B}(x, w_{2j,t,x})\big)>0, \xi_{t,x}\big(\eta_t|_{\textbf{B}(x,w_{2j-1,t,x})^c} \big)(B)=m \right\}\big] f(x) d x 
	\\
	& = t \int_{W} \mathbb{P}\left(\eta_t\big(\textbf{B}(x, w_{2j-1,t,x})\big)=0\right)
	\\
	& \qquad \times \mathbb{P}\big(\eta_t\big(\textbf{B}(x, w_{2j,t,x})\setminus \textbf{B}(x, w_{2j-1,t,x})\big) >0,\xi_{t,x}\big(\eta_t|_{\textbf{B}(x,w_{2j-1,t,x})^c} \big)(B)=m \big) f(x) d x .
	\end{align*}
	Similar arguments as used to compute $M_t([u,\infty))$ for $u\in\R$ imply for $x\in W$ that 
	$$
	t \mathbb{P}\left(\eta_t\big(\textbf{B}(x, w_{2j-1,t,x})\big)=0\right) = e^{-u_{2j-1}},
	$$
	and so we deduce that 
	\begin{equation}\label{prof-step-2}\begin{split}
	S_j & =   e^{-u_{2j-1}}\int_{W}\mathbb{P}\left(\xi_{t,x}\big(\eta_t|_{\textbf{B}(x,w_{2j-1,t,x})^c}	\big)(B)=m \right) f(x) d x
	\\
	& \quad - e^{-u_{2j-1}}\int_{W}\mathbb{P}\big( \eta_t\big(\textbf{B}(x, w_{2j,t,x})\setminus \textbf{B}(x, w_{2j-1,t,x})\big) = 0,
	\\
	& \hspace{3.5cm} \xi_{t,x}\big(\eta_t|_{\textbf{B}(x,w_{2j-1,t,x})^c}\big)(B)=m \big) f(x) d x .
	\end{split}\end{equation}
	Furthermore, we can rewrite the second integral as 
	\begin{align*}
	& \int_{W}\mathbb{P}\left(\eta_t\big(\textbf{B}(x, w_{2j,t,x})\setminus \textbf{B}(x, w_{2j-1,t,x})\big) = 0 \right) \mathbb{P}\left(\xi_{t,x}\big(\eta_t|_{\textbf{B}(x,w_{2j,t,x})^c} \big)(B)=m\right) f(x) dx 
	\\
	& =      \int_{W} e^{-t \mu(\textbf{B}(x, w_{2j,t,x})) + t\mu (\textbf{B}(x, w_{2j-1,t,x})) }\mathbb{P}\left(\xi_{t,x}\big(\eta_t|_{\textbf{B}(x,w_{2j,t,x})^c}  \big)(B)=m\right) f(x) d x  
	\\
	& = e^{-u_{2j} + u_{2j-1}}  \int_{W} \mathbb{P}\left(\xi_{t,x}\big(\eta_t|_{\textbf{B}(x,w_{2j,t,x})^c}\big)(B)=m\right) f(x) d x .
	\end{align*}
	Combining this and \eqref{prof-step-2} yields
	\begin{align*}
	S_j & =  e^{-u_{2j-1}}\int_{W}\mathbb{P}\left(\xi_{t,x}\big(\eta_t|_{\textbf{B}(x,w_{2j-1,t,x})^c}	 \big)(B)=m \right) f(x) d x 
	\\
	& \quad - e^{-u_{2j} }  \int_{W} \mathbb{P}\left(\xi_{t,x}\big(\eta_t|_{\textbf{B}(x,w_{2j,t,x})^c}\big)(B)=m\right) f(x) d x.
	\end{align*}
Substituting this into \eqref{sum-step0} implies that to prove \eqref{prfthciin} and to complete the proof, it is enough to show  for all $x\in W$ and $k=1,\dots, 2\ell$ that
	\begin{align}\label{lem:last-step}
	\underset{t\rightarrow\infty}{\lim} \,\mathbb{P}\left(\xi_{t,x}\big(\eta_t|_{\textbf{B}(x,w_{k,t,x})^c}	\big)(B)=m \right)  - \mathbb{P}(\xi_t(B)=m ) =0.
	\end{align}
	Let $x\in W$, $k\in\{1,\dots,2\ell\}$ and $\varepsilon>0$ be fixed. Set
	$$
	a_t = 2\Big(\frac{ u_{2 \ell} + \log (t)}{ 2^d f_{min} k_d t }\Big)^{1/d}.
	$$
	By Lemma \ref{lm : exist and bound of v} there exists a $t'>0$ such that  $w_{k,t,y} \leq  w_{2\ell,t,y} \leq  a_t$ for all $y\in W$ and $t>t'$. Without loss of generality we may assume that $2a_t< \min\{ \Vert z_1-z_2 \Vert \, : \, z_1\in \partial W, z_2 \in\partial A\}$. Therefore the observation 
	$$
	h(y, \nu)\in B \quad \text{if and only if} \quad h(y, \nu|_{\mathbf{B}(y,w_{2\ell,t,y})})\in B
	$$
	for any point configuration $\nu$ on $\R^d$ with $y\in\nu$ leads to
	\begin{align*}
	& \big\vert    \mathbb{P}\left(\xi_{t,x}\big(\eta_t|_{\textbf{B}(x,w_{k,t,x})^c}	 \big)(B)=m \right) - \mathbb{P}\left(\xi_t(B)=m \right)   \big\vert
    \leq \mathbb{E}\big[\vert \xi_{t,x}\big(\eta_t|_{\textbf{B}(x,w_{k,t,x})^c} \big)(B) - \xi_t(B) \vert \big]
	\\
	& \leq  \mathbb{E}\sum_{y\in\eta_t \cap \mathbf{B}(x,2a_t)\cap \textbf{B}(x,w_{k,t,x})^c} \mathbf{1}\{h_t(y,\eta_t|_{\textbf{B}(x,w_{k,t,x})^c} + \delta_x)\in B\} + \mathbb{E}\sum_{y\in\eta_t \cap \mathbf{B}(x,2a_t)} \mathbf{1}\{h_t(y,\eta_t )\in B\}
	\\
	& \leq  \mathbb{E}\sum_{y\in\eta_t \cap \mathbf{B}(x,2a_t)\cap \textbf{B}(x,w_{k,t,x})^c} \mathbf{1}\{h_t(y,\eta_t|_{\textbf{B}(x,w_{k,t,x})^c} + \delta_x) >u_1\} 
	\\
	& \quad + \mathbb{E}\sum_{y\in\eta_t \cap \mathbf{B}(x,2a_t)} \mathbf{1}\{h_t(y,\eta_t ) >u_1\}.
	\end{align*}
	Then, the Mecke formula and  \eqref{h_t-sec-1.3.3}  imply that
	\begin{align*}
	& \big\vert    \mathbb{P}\left(\xi_{t,x}\big(\eta_t|_{\textbf{B}(x,w_{k,t,x})^c} \big)(B)=m \right) - \mathbb{P}\left(\xi_t(B)=m \right)   \big\vert
	\\
	& \leq t \int_{\mathbf{B}(x,2a_t)\cap \textbf{B}(x,w_{k,t,x})^c} \mathbb{P}(h_t(y, \eta_t|_{\textbf{B}(x,w_{k,t,x})^c} +\delta_x + \delta_y)>u_1) f(y) dy
	\\
	& \quad + t \int_{\mathbf{B}(x,2a_t)} \mathbb{P}(h_t(y, \eta_t + \delta_y) >u_1) f(y) dy
	\\
	& = t \int_{\mathbf{B}(x,2a_t)\cap \textbf{B}(x,w_{k,t,x})^c} \mathbb{P}\big(t \mu(\mathbf{B}(y,2c(y, \eta_t|_{\textbf{B}(x,w_{k,t,x})^c} +\delta_x + \delta_y)))  > u_1 + \log(t)\big) f(y)dy 
	\\
	& \quad + t \int_{\mathbf{B}(x,2a_t)} \mathbb{P}\big(t \mu(\mathbf{B}(y,2c(y,\eta_t  + \delta_y)))  > u_1 + \log(t)\big) f(y)dy .
	\end{align*}
	Since $c(y,\nu + \delta_y + \delta_x)> v_t(y,u_1)$ only if $c(y,\nu + \delta_y )> v_t(y,u_1)$ for any point configuration $\nu$ on $\R^d$ and $x,y\in \R^d$, it follows for $x\in W$ and $y\in \mathbf{B}(x,2a_t)\cap \textbf{B}(x,w_{k,t,x})^c$  that
	\begin{align*}
	& \mathbb{P}\big(t \mu(\mathbf{B}(y,2c(y, \eta_t|_{\textbf{B}(x,w_{k,t,x})^c} +\delta_x + \delta_y)))  > u_1 + \log(t)\big) 
	\\
	& =  \mathbb{P}\big(c(y,\eta_t|_{\textbf{B}(x,w_{k,t,x})^c} +\delta_x + \delta_y)> v_t(y,u_1)\big)\leq  \mathbb{P}\big(c(y,\eta_t|_{\textbf{B}(x,w_{k,t,x})^c} + \delta_y)> v_t(y,u_1)\big)
	\\
	& \leq \mathbb{P}\big( \eta_t|_{\textbf{B}(x,w_{k,t,x})^c}(\textbf{B}(y, 2v_t(y,u_1)))=0\big) = \exp(-t\mu(\textbf{B}(y, 2v_t(y,u_1))\cap \textbf{B}(x,w_{k,t,x})^c)).  
	\end{align*}
	Let $\lambda_d$ denote the Lebesgue measure on $\R^d$. For $A_1,A_2\in\mathcal{B}(\R^d)$ with $A_1,A_2\subset A$ and $\lambda_d(A_2)>0$ we obtain 
	$$
	\frac{\mu(A_1)}{\mu(A_2)} \geq \frac{f_{min}}{f_{max}} \frac{\lambda_d(A_1)}{\lambda_d(A_2)}.
	$$
	With $\tau:=f_{min}/f_{max}\in(0,1]$, 
	$$
	A_1 =\textbf{B}(y, 2v_t(y,u_1))\cap \textbf{B}(x,w_{k,t,x})^c \quad and \quad A_2 =\textbf{B}(y, 2v_t(y,u_1)),
	$$
	  this implies for $x\in W$ and $y\in \mathbf{B}(x,2a_t)\cap \textbf{B}(x,w_{k,t,x})^c$ that
	$$
	t\mu(\textbf{B}(y, 2v_t(y,u_1))\cap \textbf{B}(x,w_{k,t,x})^c) \geq \frac{\tau}{2} t \mu(\textbf{B}(y, 2v_t(y,u_1))) = \frac{\tau }{2} (u_1+\log(t)).
	$$
	Moreover, we have that
	\begin{align*}
	& \mathbb{P}\big(t \mu(\mathbf{B}(y,2c(y, \eta_t + \delta_y)))  > u_1 + \log(t)\big) 
	= \mathbb{P}\big( \eta_t(\textbf{B}(y, 2v_t(y,u_1)))=0\big) = e^{-u_1-\log(t)}.
	\end{align*}
	In conclusion, combining the previous bounds leads to
     \begin{align*}
   	& \big\vert    \mathbb{P}\left(\xi_{t,x}\big(\eta_t|_{\textbf{B}(x,w_{k,t,x})^c} \big)(B)=m \right) - \mathbb{P}\left(\xi_t(B)=m \right)   \big\vert \\
   	&\leq t^{1-\tau/2} e^{-\tau u_1/2}\mu(\mathbf{B}(x,2a_t)) + e^{-u_1}\mu(\mathbf{B}(x,2a_t))
    \leq (2a_t)^d  k_d f_{max} [t^{1-\tau/2} e^{- \tau u_1 /2} + e^{-u_1}],
    \end{align*}
    where in the last step we used  the fact that $f$ is bounded by $f_{max}$ in $A$ and, by  the choice of $a_t$, $\mathbf{B}(x,2a_t) \subset A$. Again, from the definition of $a_t$ it follows that the right-hand side converges to $0$ as $t\to\infty$. This shows \eqref{lem:last-step} and concludes the proof.	
\end{proof}
Next, we derive Theorem \ref{Thm:inr-cont-dens} from Theorem \ref{thm: max inradius}. 
\begin{proof}[Proof of Theorem \ref{Thm:inr-cont-dens}]
Assume that $f$ is H\"older  continuous with   exponent $b>0$. From Lemma \ref{lem: equiv-Wconv}, Theorem \ref{thm: max inradius} and Remark \ref{rem:Euclidean} we obtain that it is enough to show that $\mathbb{E}[\vert \xi_t(B)-  \widehat{\xi}_t(B)\vert ] \rightarrow 0$ as $t\to\infty$ for all  $B\in\mathcal{I}$. 
By Lemma \ref{lm : exist and bound of v}, for any $u\in\R$ there exists a $t_0>0$ such that 
\begin{equation*}\begin{split}
&  \mu(\mathbf{B}(x, 2v_t(x,u))) = 2^d  \mu(\mathbf{B}(x, q_t(x,u)))
\\
& = 2^d  k_d f(x) q_t(x,u)^d + 2^d  \int_{\mathbf{B}(x,q_t(x,u))}f(y)-f(x) dy 
\\
& = \mu(\mathbf{B}(x, 2q_t(x,u))) - \int_{\mathbf{B}(x,2q_t(x,u))}f(y)-f(x) dy + 2^d  \int_{\mathbf{B}(x,q_t(x,u))}f(y)-f(x) dy
 \end{split}\end{equation*}
for all $x\in W, t>t_0$, where
\begin{align*}
	\max\{v_t(x,u), q_t(x,u)\}\leq \Big(\frac{ u + \log (t)}{ 2^d f_{min} k_d t }\Big)^{1/d}.
\end{align*}
Thus,  the H\"older  continuity of $f$  and elementary arguments establish that 
\begin{align}\label{eqn: diff-mu v- mu q}
\vert  \mu(\mathbf{B}(x, 2v_t(x,u)))-   \mu(\mathbf{B}(x, 2q_t(x,u)))\vert \leq C\Big(\frac{ u + \log(t)}{  t}\Big)^{1+b/d}, \quad x\in W, t>t_0,
\end{align} 
for some $C>0$. In particular, from the definition of $v_t(x,u)$ it follows  that  
\begin{equation}\label{eqn: bnd mu q}\begin{split}
 &\mu(\mathbf{B}(x, 2v_t(x,u)))=\frac{u+\log(t)}{t} 
\\
& \mu(\mathbf{B}(x, 2q_t(x,u)))\geq \frac{u+\log(t)}{t}	 - C\Big(\frac{ u + \log(t)}{  t}\Big)^{1+b/d}
\end{split}\end{equation}
for $t>t_0$.  
  Next, we write $B=\bigcup_{j=1}^{\ell} (u_{2j-1}, u_{2j})$ for some $\ell\in\N$ and $u_1<\cdots <u_{2\ell}$.
The triangle inequality yields
\begin{equation}\label{eq:liminL1xi-hatxi}\begin{split}
& \mathbb{E}[\vert \xi_t(B)-  \widehat{\xi}_t(B)\vert ]  \leq \sum_{j=1}^{\ell} \mathbb{E}[\vert \xi_t((u_{2j-1},u_{2j}))-  \widehat{\xi}_t((u_{2j-1},u_{2j}))\vert ] 
\\
& \leq \sum_{j=1}^{\ell} \mathbb{E}[\vert \xi_t((u_{2j-1},\infty))-  \widehat{\xi}_t((u_{2j-1},\infty))\vert ] + \mathbb{E}[\vert \xi_t([u_{2j},\infty))-  \widehat{\xi}_t([u_{2j},\infty))\vert].
\end{split}\end{equation}
Moreover, the Mecke formula establishes for   $u\in\R$ that  
\begin{align*}
& \mathbb{E}[\vert \xi_t((u,\infty))-  \widehat{\xi}_t((u,\infty))\vert ]  
\\
& \leq  \mathbb{E}\sum_{x\in\eta_t \cap W}\vert\mathbf{1}\{c(x,\eta_t + \delta_x )> v_t(x,u)\}
 -\mathbf{1}\{c(x,\eta_t + \delta_x )> q_t(x,u)\}\vert 
\\
& = t\int_W \mathbb{P}\big(\eta_t(\mathbf{B}(x,2v_t(x,u)))=0, \eta_t(\mathbf{B}(x,2q_t(x,u)))>0 \big) f(x) d x
\\
&\quad + t\int_W \mathbb{P}\big(\eta_t(\mathbf{B}(x,2v_t(x,u)))>0, \eta_t(\mathbf{B}(x,2q_t(x,u)))=0 \big) f(x) d x
\\
& \leq f_{max} t\int_{W}\big[\exp\big(-t\mu(\mathbf{B}(x,2v_t(x,u)))\big)+ \exp\big(-t\mu(\mathbf{B}(x,2q_t(x,u)))\big)\big]
\\
& \hspace{2cm}\times\big[1 - \exp\big(-t\vert\mu(\mathbf{B}(x,2q_t(x,u)))- \mu(\mathbf{B}(x,2v_t(x,u)))\vert \big)\big]  dx.
\end{align*}
Therefore, from \eqref{eqn: diff-mu v- mu q} and \eqref{eqn: bnd mu q}, it follows that
\begin{align}\label{eqn:limL1-inr}
\lim_{t\rightarrow\infty} \mathbb{E}[\vert \xi_t((u,\infty))-  \widehat{\xi}_t((u,\infty))\vert ] = 0.
\end{align}
Together with \eqref{eq:liminL1xi-hatxi} and a similar computation for the half-closed intervals on the right-hand side of \eqref{eq:liminL1xi-hatxi}, this concludes the proof.
\end{proof}
\begin{proof}[Proof of Corollary \ref{cor:maxInr}]
Let $u\in\R$ be fixed. By Markov's inequality we have for $u_0>u$ that
	\begin{align*}
		\mathbb{P}(\xi_t((u,u_0))>0 ) &\leq \mathbb{P}(\xi_t((u,\infty))>0 ) = \mathbb{P}\big( \max_{x\in\eta_t\cap W} t \mu(\mathbf{B}(x,2c(x,\eta_t ))) - \log(t) > u  \big) 
		\\
		& \leq  \mathbb{P}(\xi_t((u,u_0))>0 ) + \mathbb{E}[\xi_t([u_0,\infty))].
	\end{align*}
	Thus, Theorem \ref{thm: max inradius} and  \eqref{int-mea-xi}   yield
	$$
	 \limsup_{t\rightarrow\infty} \vert  \mathbb{P}\big( \max_{x\in\eta_t\cap W} t \mu(\mathbf{B}(x,2c(x,\eta_t ))) - \log(t) > u  \big) - 1 + e^{-M((u,u_0))} \vert \leq e^{-u_0}.
	$$
	Then, letting $u_0\to \infty$ leads to \eqref{eq:max-inr}. Since, for $u>0$,
	$$
	\big|\mathbb{P}(\xi_t((u,\infty))>0 ) - \mathbb{P}(\widehat{\xi}_t((u,\infty))>0 ) \big| \leq \mathbb{E}[\vert \xi_t((u,\infty))-  \widehat{\xi}_t((u,\infty))\vert],
	$$
	\eqref{eq:max-inr} and \eqref{eqn:limL1-inr} imply \eqref{eq:max-inr-contf}.
\end{proof}

\subsection{Circumscribed radii of an inhomogeneous Poisson Voronoi tessellation}\label{circ-rad-proof}
In this last section, we consider  the  circumscribed radii of an inhomogeneous  Voronoi tessellation generated by a Poisson  process with a certain intensity measure $t\mu, t>0$;  recall that the circumscribed radius of a cell is  the smallest radius for which the ball centered at the nucleus contains the cell. We study the point process on the non-negative real line constructed by taking  for any cell with the nucleus in a compact convex set, a transform of the $\mu$-measure of the ball centered at the nucleus and with  the circumscribed radius as the radius. The aim is to continue the work started in \cite{MR3252817} by extending the  result on the smallest circumscribed radius to  inhomogeneous  Poisson-Voronoi tessellations and by   proving weak convergence of    the aforementioned  point process to a Poisson process. 

More precisely, let $\mu$ be an absolutely continuous  measure on $\R^d$  with continuous density $f: \R^d \rightarrow [0,\infty)$.   Consider a Poisson  process $\eta_{t}$  with intensity measure $t \mu$, $t>0$.
The circumscribed radius of the Voronoi cell $N(x,\eta_t)$ with $x\in\eta_t$ is given by
$$
C(x,\eta_t)
=\inf\left\{R\geq0\,:\,\mathbf{B}(x,R)\supset N(x,\eta_t)\right\},
$$
with the convention $\inf\emptyset =\infty$; see Section \ref{inradius} for more details on Voronoi tessellations.

Let $W\subset\mathbb{R}^d$ be a compact convex set with $f>0$ on $W$. We consider the point process
\begin{align}\label{xi-circ-rad}
\xi_t
=\sum_{x\in\eta_t \cap W}\delta_{\alpha_2t^{(d+2)/(d+1)} \mu(\mathbf{B}(x,C(x,\eta_t)))}.
\end{align}
Here the positive constant $\alpha_2$ is given by
$$
\alpha_2
=\left(\frac{2^{d(d+1)}}{(d+1)!}p_{d+1}\right)^{1/(d+1)}
$$
with
$$
p_{d+1} := \mathbb{P}\Big( N\Big(0,\sum_{j=1}^{d+1} \delta_{Y_j}\Big) \subseteq  \mathbf{B}(0,  1)\Big),
$$
where $Y_1, \dots , Y_{d+1}$ are independent and uniformly distributed random points in $\mathbf{B}(0, 2)$. We write $M$ for the measure on $[0,\infty)$ satisfying $M([0,u])= \mu(W) u^{d+1}$ for $u\geq0$.
\begin{theorem}\label{main-Thm-circ}
	Let $\xi_t$, $t>0$, be the family of point processes on $[0,\infty)$ given by \eqref{xi-circ-rad}. Then $\xi_t$ converges in distribution to a Poisson  process  with intensity measure $M$.
\end{theorem}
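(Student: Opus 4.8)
The plan is to apply Theorem \ref{real} with $S=[0,\infty)$ and the ring $\mathcal{I}$ of finite unions of bounded open intervals (Remark \ref{rem:Euclidean}), following the same scheme as in the proof of Theorem \ref{thm: max inradius}. Writing $h_t(y,\nu)=\alpha_2 t^{(d+2)/(d+1)}\mu(\mathbf{B}(y,C(y,\nu)))$, so that $\xi_t=\sum_{x\in\eta_t\cap W}\delta_{h_t(x,\eta_t)}$, we must establish the convergence \eqref{pplnew1} of the intensity measures $M_t$ to $M$ and the decoupling identity \eqref{pplnew}.

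The substantial first step is a geometric lemma in the spirit of Lemma \ref{lm : exist and bound of v}. For $u>0$ and $x\in W$ let $r_t(x,u)$ be the unique radius with $\mu(\mathbf{B}(x,r_t(x,u)))=u\alpha_2^{-1}t^{-(d+2)/(d+1)}$, which exists for large $t$ since $f$ is continuous and strictly positive on the compact set $W$, hence bounded away from $0$ and $\infty$ on a fixed neighbourhood, and which satisfies $r_t(x,u)\to0$ uniformly in $x\in W$. The key claim is
$$
\lim_{t\to\infty} t\,\mathbb{P}\big(C(x,\eta_t+\delta_x)\le r_t(x,u)\big)=u^{d+1}
$$
uniformly in $x\in W$, together with a uniform-in-$t$ bound $t\,\mathbb{P}(C(x,\eta_t+\delta_x)\le r_t(x,u))\le c(u)$. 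To prove this one conditions on $\eta_t(\mathbf{B}(x,2r_t(x,u)))$: since $C(x,\nu)<R$ forces $\partial\mathbf{B}(x,R)$ to be covered by the balls $\mathbf{B}(y,R)$ with $y\in\nu$ and $\|y-x\|<2R$, the event $\{C(x,\eta_t+\delta_x)\le r_t(x,u)\}$ requires at least $d+1$ points of $\eta_t$ in $\mathbf{B}(x,2r_t(x,u))$ and, on this event, $C(x,\eta_t+\delta_x)$ is already determined by $\eta_t\cap\mathbf{B}(x,2r_t(x,u))$. The contribution of configurations with $\ge d+2$ such points is $O((t\mu(\mathbf{B}(x,2r_t(x,u))))^{d+2})=O(t^{-(d+2)/(d+1)})$ after multiplication by the leading factor $t$, hence negligible; the contribution of exactly $d+1$ points equals $\mathbb{P}(\eta_t(\mathbf{B}(x,2r_t(x,u)))=d+1)$ times the conditional probability that the Voronoi cell of $x$ generated by these $d+1$ points lies in $\mathbf{B}(x,r_t(x,u))$, and after rescaling by $r_t(x,u)^{-1}$ and using the continuity of $f$ this conditional probability tends to the constant $p_{d+1}$ of the statement. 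Since $\mathbb{P}(\eta_t(\mathbf{B}(x,2r_t(x,u)))=d+1)\sim (2^d t\mu(\mathbf{B}(x,r_t(x,u))))^{d+1}/(d+1)! = 2^{d(d+1)}u^{d+1}\alpha_2^{-(d+1)}t^{-1}/(d+1)!$, multiplying by $t$ and by $p_{d+1}$ and recalling $\alpha_2^{d+1}=2^{d(d+1)}p_{d+1}/(d+1)!$ gives exactly $u^{d+1}$. This refines the stationary computation of \cite{MR3252817} to the inhomogeneous case, with uniformity in $x$ coming from the modulus of continuity of $f$ on a fixed neighbourhood of $W$.

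Granting the lemma, the Mecke formula yields $M_t([0,u])=t\int_W\mathbb{P}(C(x,\eta_t+\delta_x)\le r_t(x,u))f(x)\,dx$, so dominated convergence (using the uniform bound $c(u)$ and compactness of $W$) gives $M_t([0,u])\to\mu(W)u^{d+1}=M([0,u])$, hence $M_t(B)\to M(B)$ for all $B\in\mathcal{I}$ in $[0,\infty)$, which is \eqref{pplnew1}. For \eqref{pplnew} one writes $B=\bigcup_{j=1}^{\ell}(u_{2j-1},u_{2j})$ and, exactly as in the proof of Theorem \ref{thm: max inradius}, introduces the radii $w_{k,t,x}=2r_t(x,u_k)$. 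The locality fact replacing the void-probability computation of the inradius case is that $\{h_t(x,\eta_t+\delta_x)\in(u_{2j-1},u_{2j})\}=\{C(x,\eta_t+\delta_x)\in(r_t(x,u_{2j-1}),r_t(x,u_{2j}))\}$ is measurable with respect to $\eta_t\cap\mathbf{B}(x,w_{2\ell,t,x})$, because once $C(x,\cdot)<r_t(x,u_{2\ell})$ the cell $N(x,\cdot)$ is contained in $\mathbf{B}(x,r_t(x,u_{2\ell}))$ and points at distance $\ge w_{2\ell,t,x}$ from $x$ cannot affect it. Likewise, a cell $N(y,\eta_t+\delta_x)$ with $y$ at distance much larger than $r_t(x,u_{2\ell})$ from $x$ is unaffected both by $\delta_x$ and by $\eta_t|_{\mathbf{B}(x,w_{2\ell,t,x})}$. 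Replacing $\sum_{y\in\eta_t\cap W}\delta_{h_t(y,\eta_t+\delta_x)}(B)$ by the counting measure $\xi_{t,x}(\eta_t|_{\mathbf{B}(x,w_{2\ell,t,x})^c})(B)$ built only from points outside this small ball costs an error bounded by $\mathbb{E}[\eta_t(\mathbf{B}(x,c\,r_t(x,u_{2\ell})))]=t\,\mu(\mathbf{B}(x,c\,r_t(x,u_{2\ell})))=O(t^{-1/(d+1)})\to0$, uniformly in $x\in W$. After this replacement the two indicators in \eqref{pplnew} depend on the independent restrictions $\eta_t|_{\mathbf{B}(x,w_{2\ell,t,x})}$ and $\eta_t|_{\mathbf{B}(x,w_{2\ell,t,x})^c}$, so the expectation factorizes; since $t\int_W\mathbb{P}(h_t(x,\eta_t+\delta_x)\in(u_{2j-1},u_{2j}))f(x)\,dx=M_t((u_{2j-1},u_{2j}))$ by Mecke and $\sum_j M_t((u_{2j-1},u_{2j}))=M_t(B)$, summing over $j$ reduces \eqref{pplnew} to showing $\mathbb{P}(\xi_{t,x}(\eta_t|_{\mathbf{B}(x,w_{k,t,x})^c})(B)=m)-\mathbb{P}(\xi_t(B)=m)\to0$ uniformly in $x$ together with $M_t(B)\to M(B)$; the former difference is again bounded by $\mathbb{E}[\eta_t(\mathbf{B}(x,c\,r_t(x,u_{2\ell})))]\to0$. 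An application of Theorem \ref{real} then completes the proof.

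The main obstacle is the geometric lemma: pinning down the first-order asymptotics of $\mathbb{P}(C(x,\eta_t+\delta_x)\le r_t(x,u))$ with the exact constant $p_{d+1}$, uniformly over $x\in W$ and without any closed-form expression --- in contrast with the explicit void-probability formulas available for the inradius. Controlling the conditional law of the $d+1$ ``trapping'' points, checking that configurations with more points are of strictly lower order, and propagating uniformity through the continuity of $f$ are the delicate points; the decoupling step, by contrast, is a faithful analogue of the argument in Theorem \ref{thm: max inradius}, with the void probabilities replaced by the monotonicity and locality properties of the circumscribed radius sketched above. (The convexity of $W$ enters only through mild boundary considerations, e.g.\ $\mu(\partial W)=0$, as in \cite{MR3252817}.)
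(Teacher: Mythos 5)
Your overall architecture --- Theorem \ref{real} on the ring $\mathcal{I}$, the first-order asymptotics $t\,\mathbb{P}(C(x,\eta_t+\delta_x)\le r_t(x,u))\to u^{d+1}$ obtained by conditioning on $\eta_t(\mathbf{B}(x,2r_t(x,u)))$, discarding configurations with at least $d+2$ points and passing to the limit constant $p_{d+1}$, followed by a local decoupling --- is the same as the paper's, and the intensity-measure part of your sketch is sound. The gap is in the decoupling step. You claim that replacing $\sum_{y\in\eta_t\cap W}\delta_{h_t(y,\eta_t+\delta_x)}(B)$ by the count built only from points outside $\mathbf{B}(x,w_{2\ell,t,x})$ costs an error of order $\mathbb{E}[\eta_t(\mathbf{B}(x,c\,r_t(x,u_{2\ell})))]=O(t^{-1/(d+1)})$. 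But this replacement is made \emph{inside} the expectation that also carries the indicator $\mathbf{1}\{h_t(x,\eta_t+\delta_x)\in B\}$, and that indicator forces at least $d+1$ points of $\eta_t$ into $\mathbf{B}(x,2g(x,\overline{u}/s_t))$ --- exactly the points you are discarding from the sum. On the relevant event one therefore has $\eta_t(\mathbf{B}(x,c\,r_t(x,u_{2\ell})))\ge d+1$ almost surely, so $\mathbb{E}\big[\mathbf{1}\{h_t(x,\eta_t+\delta_x)\in B\}\,\eta_t(\mathbf{B}(x,c\,r_t(x,u_{2\ell})))\big]\ge (d+1)\,\mathbb{P}(h_t(x,\eta_t+\delta_x)\in B)$, which is of order $1/t$; after multiplication by $t$ and integration over $W$ your error estimate is bounded \emph{below} by a positive constant. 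The error is of the same order as the main term, not negligible.

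What is needed (and what the paper supplies) is the geometric fact that the $d+1$ trapping points themselves cannot contribute to the count: if the Voronoi cell of $x_0$ generated by $x_0,y_1,\dots,y_{d+1}$ is bounded, then each cell $N(y_i,\cdot)$ in that configuration is unbounded (Lemma \ref{lemmconvhull}). Hence, on the event that exactly $d+1$ points of $\eta_t$ lie in $\mathbf{B}(x,\ell_t/2)$ and the annulus $\mathbf{B}(x,\ell_t)\setminus\mathbf{B}(x,\ell_t/2)$ is empty, each $y_i$ satisfies $C(y_i,\eta_t+\delta_x)>\ell_t/4>g(y_i,\overline{u}/s_t)$ and so $h_t(y_i,\eta_t+\delta_x)\notin B$. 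Only after this observation does the sum over $y\in\eta_t\cap W$ reduce exactly to the sum over $y$ outside $\mathbf{B}(x,\ell_t)$, and only then do the factorization and the final void-probability bound $\mathbb{P}(\eta_t(\mathbf{B}(x,\ell_t))>0)\le t\mu(\mathbf{B}(x,\ell_t))\to0$ (the step where your $O(t^{-1/(d+1)})$ estimate is legitimately used) close the argument. Your proof needs this lemma, or an equivalent second-order estimate showing that two nearby nuclei both having small circumscribed radii is an event of strictly higher order.
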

An immediate consequence of this theorem is that a transform of the minimal $\mu$-measure of the balls, having  circumscribed radii and  nuclei of the Voronoi cells  as radii and centers respectively, converges to a  Weibull distributed random variable. This generalizes \cite[Theorem 1, Equation (2d)]{MR3252817}. For the situation that, in contrast to Theorem \ref{main-Thm-circ}, the density of the intensity measure of the underlying Poisson process is not continuous, we can still derive some upper and lower bounds.
\begin{theorem}\label{min circradii}
Let $\zeta_t$ be a Poisson  process with intensity measure $t\vartheta$, where $t>0$ and $\vartheta$ is an absolutely continuous measure on $\R^d$ with density $\phi$.  Let $ f_1, f_2:\R^d \rightarrow [0,\infty)$ be  continuous and $f_1,f_2>0$ on $W$. 
\begin{itemize}
\item[(i)]
If there exists $s\in(0,1]$ such that $s\phi\leq f_1 \leq \phi$, then
$$
\limsup_{t\rightarrow\infty}\mathbb{P}\Big( s\alpha_2t^{(d+2)/(d+1)}\min_{x\in\zeta_t\cap W} \vartheta(\mathbf{B}(x, C(x,\zeta_t))) >u \Big)\leq  \exp\big(- s \vartheta(W) u^{d+1}\big)
$$
for $u\geq0$.
\item[(ii)]
If there exists $r\geq 1$ such that $\phi \leq f_2 \leq  r\phi$, then
$$
\liminf_{t\rightarrow\infty}\mathbb{P}\Big( r\alpha_2t^{(d+2)/(d+1)}\min_{x\in\zeta_t\cap W} \vartheta(\mathbf{B}(x, C(x,\zeta_t))) >u \Big)\geq  \exp\big(-r \vartheta(W) u^{d+1}\big)
$$
for $u\geq0$.
\end{itemize}
\end{theorem}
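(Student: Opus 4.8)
The plan is to reduce both inequalities to Theorem~\ref{main-Thm-circ}, applied not to $\vartheta$ (whose density may be discontinuous) but to the auxiliary measures carrying the continuous densities $f_1$ and $f_2$, and then to transfer the resulting tail asymptotics back to $\zeta_t$ by a monotone coupling of Poisson processes. The one geometric fact I need is that enlarging the generating configuration shrinks every Voronoi cell: if $\nu\subseteq\nu'$ and $x\in\nu$, then $N(x,\nu')\subseteq N(x,\nu)$, hence $C(x,\nu')\le C(x,\nu)$.

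For part~(i), let $\mu_1$ be the measure with density $f_1$. Since $f_1$ is continuous, positive on $W$, and dominated by the locally integrable $\phi$, the measure $\mu_1$ is locally finite and meets the hypotheses of Theorem~\ref{main-Thm-circ}. Writing $\eta_t^{(1)}$ for a Poisson process of intensity $t\mu_1$, that theorem gives that
$$
\xi_t^{(1)}=\sum_{x\in\eta_t^{(1)}\cap W}\delta_{\alpha_2 t^{(d+2)/(d+1)}\mu_1(\mathbf{B}(x,C(x,\eta_t^{(1)})))}
$$
converges in distribution to a Poisson process with intensity $M_1$, where $M_1([0,v])=\mu_1(W)v^{d+1}$. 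Because $\mu_1\le\vartheta$, I would realize $\eta_t^{(1)}$ and $\zeta_t$ on a common probability space with $\eta_t^{(1)}\subseteq\zeta_t$, by superposing onto $\eta_t^{(1)}$ an independent Poisson process of intensity $t(\vartheta-\mu_1)$. Then $C(x,\zeta_t)\le C(x,\eta_t^{(1)})$ for every $x\in\eta_t^{(1)}$, and together with $s\vartheta\le\mu_1$ this gives $s\,\vartheta(\mathbf{B}(x,C(x,\zeta_t)))\le\mu_1(\mathbf{B}(x,C(x,\eta_t^{(1)})))$ for all $x\in\eta_t^{(1)}\cap W$. Hence the event whose probability appears in~(i) is contained in $\{\xi_t^{(1)}([0,u])=0\}$, which is itself contained in $\{\xi_t^{(1)}((0,u))=0\}$. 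As $(0,u)\in\mathcal{I}$, the weak convergence of $\xi_t^{(1)}$ yields $\lim_{t\to\infty}\mathbb{P}(\xi_t^{(1)}((0,u))=0)=e^{-M_1((0,u))}=e^{-\mu_1(W)u^{d+1}}$, and since $\mu_1(W)\ge s\vartheta(W)$ the bound in~(i) follows (the case $u=0$ being trivial).

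Part~(ii) is symmetric: take $\mu_2$ with density $f_2$, which is continuous, positive on $W$, and locally integrable because $f_2\le r\phi$, so Theorem~\ref{main-Thm-circ} again applies and the analogous process $\xi_t^{(2)}$, built from a Poisson process $\eta_t^{(2)}$ of intensity $t\mu_2$, converges weakly to a Poisson process with intensity $M_2([0,v])=\mu_2(W)v^{d+1}$. Now $\vartheta\le\mu_2$, so I would couple $\zeta_t\subseteq\eta_t^{(2)}$, which forces $C(x,\eta_t^{(2)})\le C(x,\zeta_t)$ for $x\in\zeta_t$; combined with $\mu_2\le r\vartheta$, this shows that $\{\xi_t^{(2)}([0,u])=0\}$ is contained in the event whose probability appears in~(ii). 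Since $\{\xi_t^{(2)}((-1,u+\varepsilon))=0\}\subseteq\{\xi_t^{(2)}([0,u])=0\}$ and $(-1,u+\varepsilon)\in\mathcal{I}$, the weak convergence of $\xi_t^{(2)}$ gives $\liminf_{t\to\infty}\mathbb{P}(\xi_t^{(2)}([0,u])=0)\ge e^{-M_2((-1,u+\varepsilon))}=e^{-\mu_2(W)(u+\varepsilon)^{d+1}}$ for all $\varepsilon>0$; letting $\varepsilon\downarrow0$ and using $\mu_2(W)\le r\vartheta(W)$ gives the lower bound.

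The substantive idea is the monotonicity of the Voronoi cells, hence of the circumscribed radii, under adding Poisson points, which lets me sandwich the inhomogeneous statistic between two continuous-density models covered by Theorem~\ref{main-Thm-circ}; this step is elementary, so I do not expect a genuine obstacle. The one point requiring care is that the events $\{\min>u\}$ are described by half-closed sets $[0,u]$, while the limiting Poisson processes are controlled only on the open sets in $\mathcal{I}$; exactly as in the proof of Corollary~\ref{cor:maxInr}, I would handle this by squeezing $[0,u]$ between $(0,u)$ from the inside and $(-1,u+\varepsilon)$ from the outside and passing to the limit, using the absolute continuity of $M_1$ and $M_2$ to evaluate the limiting intensities.
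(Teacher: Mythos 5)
Your proposal is correct and follows essentially the same route as the paper: couple $\zeta_t$ monotonically with Poisson processes of intensities $t\mu_1\leq t\vartheta$ and $t\mu_2\geq t\vartheta$, use that adding points only shrinks Voronoi cells (hence circumscribed radii), apply Theorem~\ref{main-Thm-circ} to the continuous-density measures $\mu_1,\mu_2$, and compare $\mu_1(W)\geq s\vartheta(W)$ and $\mu_2(W)\leq r\vartheta(W)$. The only cosmetic difference is that the paper realizes all three processes simultaneously by projecting a single Poisson process on $\R^d\times[0,\infty)$ below the graphs of $tf_1$, $t\phi$, $tf_2$, whereas you couple pairwise by superposition; both are valid.
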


Let us now prepare the proof of Theorem \ref{main-Thm-circ}.  We first have to study the distribution of $C(x,\eta_t + \delta_x)$, which is defined as the circumscribed radius of the Voronoi cell with nucleus $x\in\R^d$ generated by $\eta_t + \delta_x$. To this end, we define  $g: W \times T \rightarrow [0,\infty)$ by the equation 
\begin{align}\label{eq: ident mu}
  \mu(\mathbf{B}(x,g(x,u)))=u
\end{align}
for $T:= [0, \mu(W)]$. Since $W$ is compact and convex and $f>0$ on $W$, we have that \eqref{eq: ident mu} admits a unique solution $g(x,u)$ for all $(x,u)\in W\times T$. As this is the only place where we use the convexity of $W$, we believe that one can omit this assumption. However, we refrained from doing so in order to not further increase the complexity of the proof. Set
\begin{align*}
s_t = \alpha_2 t^{(d+2)/(d+1)} .
\end{align*}
Thus, we may write 
\begin{align}\label{eq: def g}
\mathbb{P}(s_t\mu(\mathbf{B}(x,C(x,\eta_t+\delta_x)))\leq u)
= \mathbb{P}(C(x,\eta_t+ \delta_x)\leq g(x,u/s_t)) , \quad u/s_t \in T.
\end{align}

\begin{lemma}\label{lem: unifcont of g}
	For any $u\in T$, $g(\cdot, u ): W \rightarrow\R$ is continuous and 
	$$
	\lim_{u\rightarrow 0^+} \sup_{x\in W} \vert g(x,u)\vert =0.
	$$
\end{lemma}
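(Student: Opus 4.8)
The plan is to extract both statements from the defining relation \eqref{eq: ident mu}, namely $\mu(\mathbf{B}(x,g(x,u)))=u$, by exploiting the uniform comparability of $\mu$ with Lebesgue measure on a neighborhood of the compact set $W$. First I would record the standing bounds: since $W$ is compact and $f$ is continuous and strictly positive on $W$, there is a bounded open set $A$ with $W\subset A$ and constants $0<f_{\min}\le f_{\max}<\infty$ with $f_{\min}\le f\le f_{\max}$ on $A$; choose also $\rho>0$ with $\mathbf{B}(x,\rho)\subset A$ for every $x\in W$. For $r\le\rho$ this gives the sandwich $f_{\min}k_d r^d\le \mu(\mathbf{B}(x,r))\le f_{\max}k_d r^d$, uniformly in $x\in W$, where $k_d$ is the volume of the unit ball.

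For the limit statement, I would argue that $g(x,u)\le (u/(f_{\min}k_d))^{1/d}$ whenever the right-hand side is at most $\rho$: indeed $\mu(\mathbf{B}(x,(u/(f_{\min}k_d))^{1/d}))\ge f_{\min}k_d\cdot u/(f_{\min}k_d)=u$, and since $a\mapsto\mu(\mathbf{B}(x,a))$ is strictly increasing on $[0,\rho]$ (because $f_{\min}>0$), the solution $g(x,u)$ of $\mu(\mathbf{B}(x,g(x,u)))=u$ must satisfy $g(x,u)\le(u/(f_{\min}k_d))^{1/d}$. Taking the supremum over $x\in W$ and letting $u\to0^+$ yields $\sup_{x\in W}g(x,u)\to0$, which is the second claim.

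For continuity of $g(\cdot,u)$ with $u\in T$ fixed, I would fix $x_0\in W$ and $\varepsilon>0$, and use two ingredients: the strict monotonicity of $r\mapsto\mu(\mathbf{B}(x_0,r))$ (so $\mu(\mathbf{B}(x_0,g(x_0,u)-\varepsilon))<u<\mu(\mathbf{B}(x_0,g(x_0,u)+\varepsilon))$ for small $\varepsilon$), and the continuity of the map $(x,r)\mapsto\mu(\mathbf{B}(x,r))=\int_{\mathbf{B}(x,r)}f\,d\lambda_d$ in $x$, which follows from dominated convergence since $\lambda_d(\mathbf{B}(x,r)\triangle\mathbf{B}(x_0,r))\to0$ as $x\to x_0$ and $f$ is bounded on $A$. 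Combining these, for $x$ close enough to $x_0$ one gets $\mu(\mathbf{B}(x,g(x_0,u)-\varepsilon))<u<\mu(\mathbf{B}(x,g(x_0,u)+\varepsilon))$, and monotonicity of $r\mapsto\mu(\mathbf{B}(x,r))$ forces $g(x_0,u)-\varepsilon<g(x,u)<g(x_0,u)+\varepsilon$, i.e.\ $|g(x,u)-g(x_0,u)|<\varepsilon$.

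The only mildly delicate point is the joint continuity of $(x,r)\mapsto\mu(\mathbf{B}(x,r))$, but this is a routine dominated-convergence argument using that $f$ is locally bounded and the symmetric difference of nearby balls has small Lebesgue measure; everything else is elementary monotonicity bookkeeping. (One should note that the limit statement already gives $g(x,u)\le\rho$ for all $x\in W$ once $u$ is small, and for general $u\in T$ the solution stays bounded because $\mu(W)<\infty$ and $W$ is bounded, so the argument never leaves the regime where the sandwich bounds apply after enlarging $A$ if necessary.)
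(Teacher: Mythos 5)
Your proof is correct, but it takes a different route from the paper's on both halves of the statement, so a comparison is worthwhile. For the uniform limit, you prove the quantitative bound $g(x,u)\le (u/(f_{\min}k_d))^{1/d}$ directly from the lower sandwich $\mu(\mathbf{B}(x,r))\ge f_{\min}k_d r^d$ on a neighborhood of $W$; this is essentially the content of the paper's subsequent Lemma \ref{lem:bnd on g}, pulled forward. The paper instead establishes pointwise convergence $g(x,u)\to 0$, monotonicity in $u$, and continuity in $x$, and then invokes Dini's theorem to upgrade to uniformity. Your route is more elementary and yields an explicit rate, and it decouples the two claims of the lemma (in the paper the continuity statement is a genuine prerequisite for the Dini argument). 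For continuity of $g(\cdot,u)$, the paper uses only the ball inclusions $\mathbf{B}(x_0,R)\subset\mathbf{B}(x,R+\varepsilon')$ and $\mathbf{B}(x,R-\varepsilon')\subset\mathbf{B}(x_0,R)$ for $\|x-x_0\|<\varepsilon'$, which transfer the defining equation with non-strict inequalities and need no continuity of the measure in $x$; you instead combine continuity of $x\mapsto\mu(\mathbf{B}(x,r))$ (a routine dominated-convergence fact, as you note) with the \emph{strict} inequalities $\mu(\mathbf{B}(x_0,g(x_0,u)\pm\varepsilon))\gtrless u$. The one point you should phrase more carefully is where that strictness comes from: for large $u\in T$ the relevant annuli may leave the neighborhood of $W$ on which $f$ is bounded below, so ``strict monotonicity of $r\mapsto\mu(\mathbf{B}(x_0,r))$'' is not available globally; the correct justification is the uniqueness of the solution of \eqref{eq: ident mu}, which is part of the definition of $g$ (and is where the convexity of $W$ enters). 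Your closing parenthetical about ``enlarging $A$'' should be replaced by this appeal to uniqueness, since the lower bound $f_{\min}>0$ need not survive enlargement. With that adjustment both arguments are complete; the paper's is marginally more self-contained, yours is more quantitative.
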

\begin{proof}
	First we show that  $g(\cdot,u)$ is continuous for any fixed $u\in T$. For $u=0$, we obtain $g(x,u)=0$ for all $x\in W$. Assume $u>0$ and let $x_0\in W$ and $\varepsilon >0$. Then for all  $x\in \mathbf{B}(x_0, \varepsilon')$ with $\varepsilon':= \min\{g(x_0, u)/2, \varepsilon\}$, we have that 
    $$
    \mathbf{B}(x_0, g(x_0, u)) \subset \mathbf{B}(x, g(x_0, u)+  \varepsilon') \quad \text{ and } \quad \mathbf{B}(x, g(x_0, u)- \varepsilon')\subset\mathbf{B}(x_0, g(x_0, u)).
    $$
    Together with  \eqref{eq: ident mu}, this leads to 
    $$
    \mu(\mathbf{B}(x, g(x_0, u)+  \varepsilon')) \geq u \quad \text{ and } \quad \mu(\mathbf{B}(x, g(x_0, u)- \varepsilon')) \leq u.
    $$
    Now it follows from the definition of $g$ that
    $$
    g(x,u) \leq g(x_0,u)+\varepsilon' \quad \text{ and } \quad g(x,u) \geq g(x_0,u)-\varepsilon'.
    $$
   This yields
    $$
    \vert g(x,u)- g(x_0,u) \vert \leq \varepsilon' \leq \varepsilon
    $$
    for all $x\in \mathbf{B}(x_0, \varepsilon')$ so that $g$ is continuous at $x_0$.  In conclusion since
    $$
    \lim_{u\rightarrow 0^+} g(x,u)=0
    $$
     and  $g(x,u_1)< g(x,u_2)$ for all  $x\in W$ and $0\leq u_1 < u_2$, Dini's theorem implies that $\sup_{x\in W} \vert g(x,u)\vert \to 0$ as $u\to 0$.
\end{proof}
We define
$$
\beta = \min_{x\in W} f(x)>0.
$$
\begin{lemma}\label{lem:bnd on g} 
There exists $u_0\in T$ such that
\begin{align}\label{eq: bound on g}
g(x,u)\leq \Big(\frac{2u}{\beta  k_d}\Big)^{1/d}
\end{align}
for all $x\in W$. 
\end{lemma}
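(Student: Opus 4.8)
The plan is to read off the bound directly from the defining equation \eqref{eq: ident mu}, $\mu(\mathbf{B}(x,g(x,u)))=u$, after establishing a uniform lower bound for the density $f$ on a neighbourhood of $W$. Since $f$ is continuous and $W$ is compact with $f>0$ on $W$, the number $\beta=\min_{x\in W}f(x)$ is positive. The first step is to upgrade this to a lower bound on a thickening of $W$: I would show that there is $\delta>0$ with $f(y)\geq\beta/2$ for all $y$ in $W^{(\delta)}:=\{y\in\R^d:\dist(y,W)\leq\delta\}$. This is a routine compactness argument by contradiction: if it failed for every $\delta=1/n$, one could choose $y_n$ with $\dist(y_n,W)\leq 1/n$ and $f(y_n)\leq\beta/2$; as $W$ is bounded, the $y_n$ lie in a bounded set, so a subsequence converges to some $y$, which then belongs to $W$ (it is closed) and satisfies $f(y)\leq\beta/2<\beta$, contradicting the definition of $\beta$.

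With $\delta$ fixed, I would invoke Lemma \ref{lem: unifcont of g}: since $\sup_{x\in W}g(x,u)\to 0$ as $u\to0^+$, there exists $u_0\in T$ such that $g(x,u)\leq\delta$ for all $x\in W$ and all $u\in[0,u_0]$. For such $x$ and $u$ the ball $\mathbf{B}(x,g(x,u))$ is contained in $W^{(\delta)}$, so that
$$
u=\mu\big(\mathbf{B}(x,g(x,u))\big)=\int_{\mathbf{B}(x,g(x,u))}f(y)\,dy\geq\frac{\beta}{2}\,k_d\,g(x,u)^d,
$$
and rearranging gives $g(x,u)\leq\big(2u/(\beta k_d)\big)^{1/d}$, which is \eqref{eq: bound on g}.

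The only step demanding any care is the passage from $\min_W f=\beta>0$ to a positive lower bound on a neighbourhood of $W$, and even that is a standard consequence of continuity and compactness; the remainder is an immediate computation from \eqref{eq: ident mu} and Lemma \ref{lem: unifcont of g}. The factor $2$ in the stated bound is precisely what makes room for replacing $\beta$ by $\beta/2$, so that continuity of $f$ alone (rather than a global lower bound) suffices.
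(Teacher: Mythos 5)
Your argument is correct and is essentially the paper's own proof: the paper likewise fixes $\delta>0$ with $f\geq\beta/2$ on the $\delta$-neighbourhood of $W$ (asserting this directly from continuity and compactness, where you spell out the sequential-compactness contradiction), invokes Lemma \ref{lem: unifcont of g} to get $g(x,u)\leq\delta$ for $u\in[0,u_0]$, and then reads off $u\geq\tfrac{\beta}{2}k_d\,g(x,u)^d$ from \eqref{eq: ident mu}. No substantive difference.
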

\begin{proof}
Since $f$ is continuous and  $f>0$ on $W$, it follows that 
$$
\min_{x\in W + \overline{\mathbf{B}(0, \delta )}} f(x) > \frac{\beta}{2}
$$
for some $\delta >0$. Furthermore, by Lemma \ref{lem: unifcont of g}  we obtain that there exists $u_0\in T$ such that  $g(x, u) \leq \delta$ for all $u\in [0,u_0]$ and $x\in W$. 
Then, we obtain
$$
u =\mu(\mathbf{B}(x,g(x,u)))= \int_{\mathbf{B}(x,g(x,u))} f(y) dy \geq \frac{\beta}{2} k_d g(x,u)^d
$$
for all  $x\in W$ and $u\in[0,u_0]$, which shows \eqref{eq: bound on g}.
\end{proof}
For $x\in W$ and $u\geq 0$, we consider a sequence of independent and identically distributed random points $(X^{(x,u)}_i)_{i\in\N}$ in $\R^d$ with distribution 
$$
\mathbb{P}\big(X^{(x,u)}_i\in E\big)= \frac{\mu(\mathbf{B}(x,2u)\cap E) }{\mu(\mathbf{B}(x,2u))}, \quad i\in\N,  E\in \mathcal{B}(\R^d).
$$
Recall that, for $k\geq d+1$, $N\Big(x,\sum_{j=1}^k \delta_{X^{(x,u)}_j}\Big)$ denotes the Voronoi cell with nucleus $x$ generated by $X^{(x,u)}_1,\dots , X^{(x,u)}_k$ and $x$. 
Then the distribution function of $C(x,\eta_t+\delta_x)$ is equal to  
\begin{equation}\label{ff11}
\mathbb{P}\big(C(x,\eta_t+\delta_x)\leq u\big)=\sum_{k=d+1}^\infty \mathbb{P}\big(\eta_t (\mathbf{B}(x,2u))=k\big) p_k(x,u)
\end{equation}
for $u\geq0$,  with $p_k(x,u)$ defined as  
$$
p_k(x,u) = \mathbb{P}\Big( N\Big(x,\sum_{j=1}^k \delta_{X^{(x,u)}_j}\Big) \subseteq  \mathbf{B}(x,  u)\Big).
$$
Combining \eqref{eq: def g} and \eqref{ff11} establishes for $u/s_t\in T$ that 
\begin{equation}\begin{split}\label{eq: distr smu(C)}
&\mathbb{P}(s_t\mu(\mathbf{B}(x,C(x,\eta_t+\delta_x)))\leq u)
\\
&= \sum_{k=d+1}^\infty \mathbb{P}\big(\eta_t (\mathbf{B}(x,2g(x,u/s_t)))=k\big) p_k(x,g(x,u/s_t)).
\end{split}\end{equation}
For $k\in \N$ with $k\geq d+1$, we define the probability
$$
p_k = \mathbb{P}\Big( N\Big(0,\sum_{j=1}^k \delta_{Y_j}\Big) \subseteq  \mathbf{B}(0,  1)\Big),
$$
where $Y_1, \dots , Y_k$ are independent and uniformly distributed random points in $\mathbf{B}(0, 2)$. As discussed in \cite[Section 3]{MR3252817}, one can reinterpret $p_k$ as  the probability to cover the unit sphere with $k$ independent spherical caps with random radii. In the next lemma, we prove that $p_k(x,r)\to p_k$ as $r\rightarrow0$ for all $x\in W$  and $k\geq d+1$, which together with Lemma \ref{lem:bnd on g} yields $p_k(x, g(u/s_t))\to p_k$ as $t\rightarrow \infty$. 
\begin{lemma}\label{lem: lim p(x,k)}
For any $k\geq d+1$ and $x\in W$, 
$$
\lim_{r\rightarrow 0^+} p_k(x,r) = p_k.
$$
\end{lemma}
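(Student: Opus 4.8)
The plan is to rescale the random points so that they live in the fixed ball $\mathbf{B}(0,2)$, recognize their rescaled law as a small perturbation of the uniform law there, and then transfer convergence to $p_k(x,r)$ via a total variation bound.

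First I would reduce the statement by rescaling. Fix $x\in W$ and restrict to $r>0$ small enough that $f>\beta/2$ on $\mathbf{B}(x,2r)$, which is possible by continuity of $f$ and $\beta>0$ exactly as in the proof of Lemma~\ref{lem:bnd on g}; in particular $\mu(\mathbf{B}(x,2r))\geq\tfrac{\beta}{2}k_d(2r)^d>0$. Writing $X^{(x,r)}_j=x+rZ^{(x,r)}_j$, a change of variables shows that $Z^{(x,r)}_1,\dots,Z^{(x,r)}_k$ are i.i.d.\ points in $\mathbf{B}(0,2)$ with Lebesgue density
$$
\rho_{x,r}(z)=\frac{f(x+rz)}{\int_{\mathbf{B}(0,2)}f(x+rw)\,dw},\qquad z\in\mathbf{B}(0,2).
$$
Since a Voronoi cell and a ball both scale linearly, $N\big(x,\sum_j\delta_{x+rz_j}\big)=x+rN\big(0,\sum_j\delta_{z_j}\big)$ and $\mathbf{B}(x,r)=x+r\,\mathbf{B}(0,1)$, so the inclusion defining $p_k(x,r)$ is equivalent to $N\big(0,\sum_j\delta_{Z^{(x,r)}_j}\big)\subseteq\mathbf{B}(0,1)$. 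Hence, with the Borel set $G=\{(z_1,\dots,z_k)\in(\R^d)^k:\,N(0,\sum_j\delta_{z_j})\subseteq\mathbf{B}(0,1)\}$, one has $p_k(x,r)=\mathbb{P}((Z^{(x,r)}_1,\dots,Z^{(x,r)}_k)\in G)$ and $p_k=U^{\otimes k}(G)$, where $U$ is the uniform law on $\mathbf{B}(0,2)$, with constant density $(2^dk_d)^{-1}$.

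Next I would estimate the total variation distance. Since $f$ is continuous, it is uniformly continuous on the compact set $x+\overline{\mathbf{B}(0,2r_0)}$ for some fixed small $r_0>0$, so $\sup_{z\in\mathbf{B}(0,2)}|f(x+rz)-f(x)|\to0$ as $r\to0^+$; as $f(x)\geq\beta>0$, the denominator above converges to $2^dk_df(x)$ and stays bounded away from $0$, whence $\rho_{x,r}\to(2^dk_d)^{-1}$ uniformly on $\mathbf{B}(0,2)$. Consequently $d_{\mathrm{TV}}(\mathcal{L}(Z^{(x,r)}_1),U)=\tfrac12\int_{\mathbf{B}(0,2)}|\rho_{x,r}(z)-(2^dk_d)^{-1}|\,dz\to0$, and by subadditivity of the total variation distance for product measures, $d_{\mathrm{TV}}(\mathcal{L}(Z^{(x,r)}_1,\dots,Z^{(x,r)}_k),U^{\otimes k})\leq k\,d_{\mathrm{TV}}(\mathcal{L}(Z^{(x,r)}_1),U)\to0$.

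Combining the two steps gives $|p_k(x,r)-p_k|=|\mathbb{P}((Z^{(x,r)}_1,\dots,Z^{(x,r)}_k)\in G)-U^{\otimes k}(G)|\leq d_{\mathrm{TV}}(\mathcal{L}(Z^{(x,r)}_1,\dots,Z^{(x,r)}_k),U^{\otimes k})\to0$, which is the claim. I do not expect a serious obstacle; the only points that require care are the linear scaling identity $N(x,\sum_j\delta_{x+rz_j})=x+rN(0,\sum_j\delta_{z_j})$ (immediate from the definition of a Voronoi cell) and the change of variables identifying the rescaled density as $\rho_{x,r}$, with everything else being a routine continuity-plus-total-variation argument.
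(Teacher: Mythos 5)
Your proposal is correct and follows essentially the same route as the paper: both rescale the points to a fixed ball, use the scale-invariance of the Voronoi inclusion event $N(x,\sum_j\delta_{x+rz_j})=x+rN(0,\sum_j\delta_{z_j})$, and exploit the continuity and positivity of $f$ at $x$ to identify the limiting law as uniform. The only difference is cosmetic — you pass to the limit via a total-variation bound on the product law, whereas the paper applies dominated convergence directly to the rescaled integral — and both steps are valid.
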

\begin{proof}
 In this proof, to simplify the notation,  for any $x\in W$, $k\geq d+1$ and $y_1,\hdots,y_k\in\mathbb{R}^d$, we denote by $K_k^{(x)} (y_1,\dots ,y_k)$ the Voronoi cell $N\big(x,\sum_{j=1}^k \delta_{y_j}\big)$  with nucleus   $x$  generated by $y_1,\dots , y_k$ and $x$. Thus,  we may write
$$
p_k(x,r)
= \mathbb{P}\big( K_k^{(x)} \big(X^{(x,r)}_1,\dots , X^{(x,r)}_k\big)\subseteq  \mathbf{B}(x,  r)\big),
$$
and so from the independence of $X^{(x,r)}_1,\dots , X^{(x,r)}_k$  it follows that
\begin{align*}
 p_k(x,r) 
&= \frac{1}{\mu(\mathbf{B}(x, 2r))^k} \int_{\mathbf{B}(x, 2r)^k} \mathbf{1}\big\{K_k^{(x)} (z_1,\dots , z_k) \subseteq \mathbf{B}(x, r) \big\}
\prod_{i=1}^k f(z_i) \, dz_1\, \dots dz_k
\\
&= \frac{(2r)^{k d}}{\mu(\mathbf{B}(x, 2r))^k}\int_{\mathbf{B}(0, 1)^k} \mathbf{1}\big\{K_k^{(x)} (x+2rz_1,\dots ,x+ 2rz_k) \subseteq \mathbf{B}(x, r) \big\}  \\
&\hspace{7cm} \times\prod_{i=1}^k f(x+ 2rz_i) \, dz_1\, \dots dz_k.
\end{align*}
Furthermore, by the definition of $K_k^{(x)}$ we deduce that
$$
\mathbf{1}\big\{K_k^{(x)} (x+2rz_1,\dots ,x+ 2rz_k) \subseteq \mathbf{B}(x, r) \big\} =
\mathbf{1}\big\{K_k^{(0)} (2z_1,\dots , 2 z_k) \subseteq \mathbf{B}(0, 1) \big\}
$$ 
for all $z_1,\dots , z_k \in \mathbf{B}(0,1)$, whence
\begin{align*}
p_k(x,r) 
&
= \frac{(2r)^{k d}}{\mu(\mathbf{B}(x, 2r))^k}\int_{\mathbf{B}(0, 1)^k} \mathbf{1}\big\{K_k^{(0)} (2z_1,\dots , 2 z_k) \subseteq \mathbf{B}(0, 1) \big\} 
\\
& \hspace{6cm}\times \prod_{i=1}^k f(x+ 2rz_i) \, dz_1\, \dots dz_k.
\end{align*}
Using the dominated convergence theorem for the integral, the continuity of $f$ and
$$
\lim_{t\rightarrow\infty} \frac{(2r)^{k d}}{\mu(\mathbf{B}(x, 2r))^k} = \frac{1}{k_d^{k} f(x)^k},
$$
we obtain
\begin{align*}
\lim_{t\rightarrow\infty}p_k(x,r) 
&
= \frac{1}{k_d^k}\int_{\mathbf{B}(0, 1)^k} \mathbf{1}\big\{K_k^{(0)} (2z_1,\dots , 2 z_k) \subseteq \mathbf{B}(0, 1) \big\}  \, dz_1\, \dots dz_k \\
& = \frac{1}{(2^d k_d)^k}\int_{\mathbf{B}(0, 2)^k} \mathbf{1}\big\{K_k^{(0)} (z_1,\dots ,  z_k) \subseteq \mathbf{B}(0, 1) \big\}  \, dz_1\, \dots dz_k = p_k ,
\end{align*}
which concludes the proof.
\end{proof}

Let $M_t$ be the intensity measure of $\xi_t$ and let
\begin{align*}
 \widehat{M}_t([0,u]) 
& = t \int_{W}\,\mathbb{E}\Big[\mathbf{1}\big\{s_t \mu(\mathbf{B}(x , C(x,\eta_{t} +\delta_x)))\in [0,u]\big\} \\
& \hspace{2cm}\times\mathbf{1}\Big\{\eta_t\Big(\mathbf{B}\Big(x, 4\Big(\frac{2u}{\beta s_t k_d}\Big)^{1/d}\Big)\Big)=d+1\Big\} \Big] f(x)dx
\end{align*}
and
\begin{align*}
\theta_t ([0,u]) 
& =  t \int_{W}\,\mathbb{E}\Big[\mathbf{1}\big\{ s_t \mu(\mathbf{B}(x , C(x,\eta_{t} +\delta_x)))\in [0,u]\big\}
\\
& \hspace{2cm}\times \mathbf{1}\Big\{\eta_t\Big(\mathbf{B}\Big(x, 4\Big(\frac{2 u}{ \beta s_t k_d}\Big)^{1/d}\Big)\Big)>d+1\Big\} \Big] f(x) dx
\end{align*}
for $u\geq 0$. Observe that 
\begin{align}\label{M()}
M_t([0,u])
= \widehat{M}_t([0,u]) + \theta_t ([0,u]), \quad u\geq0.
\end{align}
\begin{lemma}\label{calc-meas}
For any $u\geq 0$,
	\begin{align*}
\lim_{t\rightarrow\infty}\widehat{M}_t([0,u])  
 = \mu(W)u^{d+1}
	\end{align*}
	and
	$$
	\theta_t ([0,u]) \leq t \int_{W}\,  \mathbb{P}\Big(\eta_t\Big(\mathbf{B}\Big(x, 4\Big(\frac{2 u}{ \beta s_t k_d}\Big)^{1/d}\Big)\Big)>d+1\Big) f(x) dx \to 0 \quad \text{as} \quad t\to\infty.
	$$
	
\end{lemma}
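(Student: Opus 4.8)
The plan is to prove the two assertions separately: the bound on $\theta_t([0,u])$ comes from a direct Poisson tail estimate, and the limit $\widehat M_t([0,u])\to\mu(W)u^{d+1}$ is then obtained from the decomposition \eqref{M()} together with the corresponding limit for the full intensity measure $M_t$. For the easy half, dropping the factor $\mathbf{1}\{s_t\mu(\mathbf{B}(x,C(x,\eta_t+\delta_x)))\in[0,u]\}$ in the definition of $\theta_t([0,u])$ yields the stated inequality at once. Write $r_t=4(2u/(\beta s_t k_d))^{1/d}$, so $r_t\to0$, and fix a compact neighbourhood $W'\supset W$ on which the continuous function $f$ is bounded; for $t$ large all balls $\mathbf{B}(x,r_t)$, $x\in W$, lie in $W'$, hence $t\mu(\mathbf{B}(x,r_t))\le c\,tr_t^d=c'\,t/s_t=c''\,t^{-1/(d+1)}$ uniformly in $x$ by $s_t=\alpha_2t^{(d+2)/(d+1)}$. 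Since the mean tends to zero, the Poisson tail bound $\mathbb{P}(\mathrm{Poisson}(\lambda)\ge d+2)\le\lambda^{d+2}/(d+2)!$ gives $\mathbb{P}(\eta_t(\mathbf{B}(x,r_t))>d+1)\le (c'')^{d+2}t^{-(d+2)/(d+1)}$, and therefore
\[
\theta_t([0,u])\le t\int_W\mathbb{P}(\eta_t(\mathbf{B}(x,r_t))>d+1)f(x)\,dx\le (c'')^{d+2}\mu(W)\,t^{-1/(d+1)}\to 0 .
\]

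For the limit of $\widehat M_t$, by \eqref{M()} it suffices to prove $M_t([0,u])\to\mu(W)u^{d+1}$. The case $u=0$ is trivial, so let $u>0$; then $u/s_t\in T$ for $t$ large, and, abbreviating $g_t(x):=g(x,u/s_t)$, the Mecke formula together with \eqref{eq: distr smu(C)} gives
\[
M_t([0,u])=t\int_W\sum_{k=d+1}^\infty \mathbb{P}\big(\eta_t(\mathbf{B}(x,2g_t(x)))=k\big)\,p_k(x,g_t(x))\,f(x)\,dx .
\]
The contribution of the terms $k\ge d+2$ is, by $p_k\le1$ and the Poisson tail bound, at most $t\int_W (t\mu(\mathbf{B}(x,2g_t(x))))^{d+2}f(x)\,dx$; by Lemma \ref{lem:bnd on g} one has $g_t(x)^d\le 2(u/s_t)/(\beta k_d)$, whence $t\mu(\mathbf{B}(x,2g_t(x)))\le C t^{-1/(d+1)}$ uniformly in $x$, so this contribution is $O(t\cdot t^{-(d+2)/(d+1)})=O(t^{-1/(d+1)})\to0$, exactly as for $\theta_t$. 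It thus remains to identify the limit of the leading term $t\int_W\mathbb{P}(\eta_t(\mathbf{B}(x,2g_t(x)))=d+1)\,p_{d+1}(x,g_t(x))\,f(x)\,dx$.

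Fix $x\in W$. Since $\mu(\mathbf{B}(x,g_t(x)))=u/s_t\to0$, Lemma \ref{lem: unifcont of g} gives $g_t(x)\to0$, and then continuity of $f$ at $x$ with $f(x)>0$ yields $\mu(\mathbf{B}(x,2g_t(x)))/\mu(\mathbf{B}(x,g_t(x)))\to2^d$; hence $t\mu(\mathbf{B}(x,2g_t(x)))=2^du\,(t/s_t)(1+o(1))=(2^du/\alpha_2)\,t^{-1/(d+1)}(1+o(1))\to0$. Consequently $e^{-t\mu(\mathbf{B}(x,2g_t(x)))}\to1$ and
\[
t\,\mathbb{P}\big(\eta_t(\mathbf{B}(x,2g_t(x)))=d+1\big)=e^{-t\mu(\mathbf{B}(x,2g_t(x)))}\,\frac{t\,(t\mu(\mathbf{B}(x,2g_t(x))))^{d+1}}{(d+1)!}\longrightarrow\frac{(2^du)^{d+1}}{(d+1)!\,\alpha_2^{d+1}}=\frac{u^{d+1}}{p_{d+1}},
\]
where the last equality uses $\alpha_2^{d+1}=2^{d(d+1)}p_{d+1}/(d+1)!$. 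Combined with $p_{d+1}(x,g_t(x))\to p_{d+1}$ from Lemma \ref{lem: lim p(x,k)}, the integrand converges pointwise on $W$ to $u^{d+1}f(x)$. Finally, $t\,(t\mu(\mathbf{B}(x,2g_t(x))))^{d+1}\le t\,(Ct^{-1/(d+1)})^{d+1}=C^{d+1}$ uniformly in $x\in W$ and $t\ge1$, and $f$ is bounded on $W$, so the integrand is dominated by a constant on the finite Lebesgue-measure set $W$; dominated convergence yields $M_t([0,u])\to\int_W u^{d+1}f(x)\,dx=\mu(W)u^{d+1}$, and with \eqref{M()} and the first part also $\widehat M_t([0,u])\to\mu(W)u^{d+1}$.

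The genuinely delicate step is the pointwise asymptotics $t\mu(\mathbf{B}(x,2g_t(x)))\sim2^du\,t/s_t$: it combines the defining identity $\mu(\mathbf{B}(x,g_t(x)))=u/s_t$ with the local doubling ratio $\mu(\mathbf{B}(x,2\rho))/\mu(\mathbf{B}(x,\rho))\to2^d$ as $\rho\to0$, which is precisely where continuity and positivity of $f$ enter and where the factor $2^d$ producing the exact constant $\alpha_2$ is generated; some additional care is needed to keep all these estimates uniform in $x\in W$ so that dominated convergence (and the tail bounds above) apply.
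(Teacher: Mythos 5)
Your proof is correct. The treatment of $\theta_t([0,u])$ coincides with the paper's: drop the first indicator, apply a Poisson tail bound $\mathbb{P}(Z\ge d+2)\le v^{d+2}$ (your sharper $v^{d+2}/(d+2)!$ changes nothing), and use the uniform estimate $t\mu(\mathbf{B}(x,\ell_t))=O(t^{-1/(d+1)})$ coming from Lemma \ref{lem:bnd on g} and $s_t=\alpha_2 t^{(d+2)/(d+1)}$. For the first assertion you take a mildly different route: instead of evaluating $\widehat M_t$ directly, you compute the full intensity $M_t([0,u])$ from the Mecke formula and the series representation \eqref{eq: distr smu(C)}, discard the $k\ge d+2$ terms with the same tail bound used for $\theta_t$, and then recover $\widehat M_t=M_t-\theta_t$ from \eqref{M()}. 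The paper works with $\widehat M_t$ head-on: on the event $\eta_t(\mathbf{B}(x,\ell_t))=d+1$ all $d+1$ points must lie in $\mathbf{B}(x,2g(x,u_t))$ and none in the annulus, which factorizes the integrand into $\frac{t^{d+2}\mu(\mathbf{B}(x,2g(x,u_t)))^{d+1}}{(d+1)!}e^{-t\mu(\mathbf{B}(x,\ell_t))}p_{d+1}(x,g(x,u_t))f(x)$, after which the same limit \eqref{eq: lim t to k+1 times mu to k}, Lemma \ref{lem: lim p(x,k)} and dominated convergence finish the job. All substantive ingredients are shared — the doubling asymptotics $t^{d+2}\mu(\mathbf{B}(x,2g(x,u_t)))^{d+1}\to(2^d u/\alpha_2)^{d+1}$, the identification of the constant via $\alpha_2^{d+1}=2^{d(d+1)}p_{d+1}/(d+1)!$, and the uniform bound needed for domination — so the difference is organizational: your version avoids the annulus probability at the cost of controlling the whole sum over $k$, which is harmless since that is exactly the $\theta_t$-type estimate anyway.
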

\begin{proof}
	Let $u\geq0$ be fixed and $u_t:=u/s_t$. Without loss of generality we may assume $u_t\in T$. 
For $x\in W$ we deduce from \eqref{eq: ident mu}, $g(x,u_t)\to 0$ as $t\to\infty$ and the continuity of $f$ that
	$$
	\lim_{t\to\infty} \frac{\mu(\mathbf{B}(x,2g(x,u_t)))}{u_t} = \lim_{t\to\infty} \frac{\mu(\mathbf{B}(x,2g(x,u_t)))}{2^d k_d g(x,u_t)^d} \frac{2^d k_d g(x,u_t)^d}{\mu(\mathbf{B}(x,g(x,u_t)))} = \frac{2^d f(x)}{f(x)} = 2^d.
	$$
	Together with $u_t=u/s_t$ and $s_t=\alpha_2 t^{(d+2)/(d+1)}$, this leads to
	\begin{equation}\label{eq: lim t to k+1 times mu to k}
\lim_{t\rightarrow\infty}t^{d+2}\mu\big(\mathbf{B}(x,2g(x,u_t))\big)^{d+1}= ( 2^{d}u / \alpha_2 )^{d+1}.
	\end{equation}
Similarly, we obtain from Lemma \ref{lem:bnd on g} that for $t$ sufficiently large, 
	\begin{equation}
	\begin{split}\label{eq:bnd on mu}
	\sup_{x\in W} t^{d+2}\mu\big(\mathbf{B}(x,2g(x,u_t))\big)^{d+1} & \leq t^{d+2} (2^{d+1} u_t/\beta )^{d+1} \sup_{x\in W}\sup_{y\in \mathbf{B}(x, 2g(x,u_t))} f(y)^{d+1} \\
	& \leq ( 2^{d+1}u / (\alpha_2 \beta) )^{d+1} \sup_{y\in W+\mathbf{B}(0, 1)} f(y)^{d+1}.
	\end{split}
	\end{equation}
		Let us now compute the limit of $\widehat{M_t}([0,u])$. By Lemma \ref{lem:bnd on g} we obtain  for $\ell_t : = 4\big(\frac{2u_t}{\beta  k_d}\big)^{1/d}$ that there exists $t_0>0$ such that $2 g(x,u_t)\leq \ell_t$ for all $t>t_0$ and $x\in W$. From \eqref{eq: distr smu(C)} we deduce for $x	\in W$  that $s_t \mu(\mathbf{B}(x, C(x,\eta_{t} +\delta_x)))\in [0,u]$ only if there are at least $d+1$ points of $\eta_t$ in $\mathbf{B}\big(x,2g(x,u_t)\big)$. Then for $t>t_0$, we have
	\begin{equation*}\begin{split}
	\widehat{M}_t([0,u]) 
	& = t \int_{W}  \mathbb{P}\big(\eta_t (\mathbf{B}(x,2g(x,u_t)))=d+1\big) p_{d+1}(x,g(x,u_t))
	\\
	& \hspace{1.5cm} \times \mathbb{P}\big(\eta_t\big(\mathbf{B}(x,  \ell_t)\setminus\mathbf{B}(x,2g(x,u_t))\big)=0\big) f(x) dx
	\\
	& =\int_W \frac{t^{d+2}\mu\big(\mathbf{B}(x,2g(x,u_t))\big)^{d+1}}{(d+1)!}e^{-t\mu(\mathbf{B}(x,\ell_t))}p_{d+1}(x,g(x,u_t)) f(x) dx.
	\end{split}\end{equation*}
Elementary arguments imply  that
$$
\lim_{t\rightarrow\infty} t\mu(\mathbf{B}(x,\ell_t))=0 .
$$
Therefore combining \eqref{eq: lim t to k+1 times mu to k} and Lemma \ref{lem: lim p(x,k)}  yields
\begin{align*}
&\lim_{t\rightarrow\infty}\frac{t^{d+2}\mu\big(\mathbf{B}(x,2g(x,u_t))\big)^{d+1}}{(d+1)!}e^{-t\mu(\mathbf{B}(x,\ell_t))}p_{d+1}(x,g(x,u_t)) f(x)
\\
& 
= \Big(\frac{2^d u }{\alpha_2}\Big)^{d+1} \frac{p_{d+1}}{(d+1)!} f(x) = u^{d+1} f(x),
\end{align*}
where we used $\alpha_2= \big(\frac{2^{d(d+1)}}{(d+1)!}p_{d+1}\big)^{1/(d+1)}$ in the last step. Thus, by \eqref{eq:bnd on mu} and the dominated convergence theorem we obtain
$$
\lim_{t\rightarrow\infty}\widehat{M}_t([0,u])  
= u^{d+1}\int_{W} f(x) dx
=   \mu(W) u^{d+1}.
$$ 
Finally, let us compute the limit of $\theta_t ([0,u])$. For a Poisson distributed random variable $Z$ with parameter $v>0$ we have
$$
\mathbb{P}(Z\geq d+2) = \sum_{k=d+2}^\infty \frac{v^k}{k!} e^{-v} \leq v^{d+2} \sum_{k=0}^\infty \frac{v^k}{k!} e^{-v} = v^{d+2}.
$$
This implies that 
\begin{align*}
\theta_t ([0,u]) & \leq t \int_W \mathbb{P}\bigg( \eta_t\Big(\mathbf{B}\Big(x, 4\Big(\frac{2u_t}{\beta k_d}\Big)^{1/d}\Big)\Big)>d+1 \bigg) f(x) dx \\
& \leq t^{d+3} \int_W \mu\Big(\mathbf{B}\Big(x, 4\Big(\frac{2u_t}{\beta k_d}\Big)^{1/d}\Big)\Big)^{d+2} f(x) dx \\
& \leq \sup_{y \in W+\mathbf{B}\Big(0, 4\Big(\frac{2u_t}{\beta k_d}\Big)^{1/d}\Big)} f(y) \int_W f(x) dx \frac{2^{2d^2+5d+2}}{\beta^{d+2}} t^{d+3} u_t^{d+2} \\
& = \sup_{y \in W+\mathbf{B}\Big(0, 4\Big(\frac{2u_t}{\beta k_d}\Big)^{1/d}\Big)} f(y) \mu(W) \frac{2^{2d^2+5d+2}}{\beta^{d+2}} \frac{1}{\alpha_2^{d+2}} t^{-\frac{1}{d+1}} u^{d+2}.
\end{align*}
Here, the supremum converges to a constant as $t\to \infty$ so that the second inequality in the assertion is proven.
\end{proof}

In the next lemma, we show a technical result, which will be needed in the proof of Theorem \ref{main-Thm-circ}. For $A\subset \mathbb{R}^d$, let $\operatorname{conv}(A)$ denote the  convex hull of $A$.
\begin{lemma}\label{lemmconvhull}
	Let $x_0,\hdots,x_{d+1}\in\mathbb{R}^d$ be in general position (i.e.\ no $k$-dimensional affine subspace of $\mathbb{R}^d$ with $k\in\{0,\hdots,d-1\}$ contains more than $k+1$ of the points) and assume that $N(x_0, \sum_{j=0}^{d+1} \delta_{x_j})$ is bounded. Then,
	\begin{itemize}
		\item [a)] $x_0\in\operatorname{int}(\operatorname{conv}(\{x_1,\hdots,x_{d+1}\}))$;
		\item [b)] $N(x_i, \sum_{j=0}^{d+1} \delta_{x_j})$ is unbounded for any $i\in\{1,\hdots,d+1\}$.
	\end{itemize}
\end{lemma}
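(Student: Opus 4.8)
The plan is to exploit the geometry of the Voronoi cell $N(x_0,\sum_{j=0}^{d+1}\delta_{x_j})$, which is the set of points at least as close to $x_0$ as to each of $x_1,\dots,x_{d+1}$, i.e.\ the intersection of the $d+1$ closed half-spaces $H_j=\{y:\|y-x_0\|\le\|y-x_j\|\}$. Each $H_j$ has inner normal direction $x_j-x_0$, so $N(x_0,\cdot)$ is a polyhedron defined by exactly $d+1$ facet inequalities with outer normals $-(x_j-x_0)/\|x_j-x_0\|$, $j=1,\dots,d+1$. The starting observation is the standard fact that a polyhedron $\{y:\langle a_j,y\rangle\le b_j,\ j=1,\dots,m\}$ in $\R^d$ is bounded if and only if the normals $a_1,\dots,a_m$ positively span $\R^d$, i.e.\ $0\in\operatorname{int}(\operatorname{conv}\{a_1,\dots,a_m\})$. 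With $m=d+1$ and the points in general position (so the $a_j$ are $d+1$ vectors in $\R^d$ in ``general position''), positive spanning forces $0$ to be a strict convex combination of all $d+1$ normals, with all coefficients strictly positive.

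For part a), I would translate so that $x_0=0$, set $a_j=-(x_j-x_0)=-x_j$, and apply the boundedness criterion to get $0=\sum_{j=1}^{d+1}\lambda_j a_j$ with $\lambda_j>0$ and $\sum\lambda_j=1$ after normalization; this immediately rewrites as $x_0=0=\sum_j \lambda_j\,(\text{something})$ — more precisely, one rescales the $\lambda_j$ by $\|x_j-x_0\|^{-1}$ and renormalizes to obtain positive weights $\theta_j$ with $\sum\theta_j=1$ and $\sum_j\theta_j x_j = x_0$, which is exactly $x_0\in\operatorname{int}(\operatorname{conv}\{x_1,\dots,x_{d+1}\})$ (a strict convex combination of $d+1$ affinely independent points lies in the interior of their simplex). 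The general-position hypothesis is what guarantees the weights are all strictly positive and the points span a full-dimensional simplex.

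For part b), fix $i\in\{1,\dots,d+1\}$ and consider $N(x_i,\sum_{j=0}^{d+1}\delta_{x_j})$, which is cut out by the $d+1$ half-spaces with outer normals proportional to $x_j-x_i$ for $j\in\{0,\dots,d+1\}\setminus\{i\}$. To show unboundedness it suffices to show these $d+1$ normal directions do \emph{not} positively span $\R^d$, equivalently that there is a nonzero vector $v$ with $\langle v,\, x_j-x_i\rangle\le 0$ for all such $j$ — a recession direction of the cell. I would produce such a $v$ from part a): since $x_0\in\operatorname{int}(\operatorname{conv}\{x_1,\dots,x_{d+1}\})$, the point $x_i$ is a vertex of that simplex, hence there is a supporting hyperplane of the simplex at $x_i$; its inner normal $v$ satisfies $\langle v, x_j-x_i\rangle\le 0$ for $j=1,\dots,d+1$ (with strict inequality for $j\ne i$), and since $x_0$ is in the interior, $\langle v,x_0-x_i\rangle<0$ as well. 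Thus $v$ is a nonzero recession direction for $N(x_i,\cdot)$, so the cell is unbounded.

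The main obstacle is stating and invoking the boundedness criterion for $H$-polyhedra cleanly and making sure the general-position hypothesis is used exactly where needed — namely to rule out degenerate configurations where some normal has zero coefficient (which would correspond to a redundant facet, i.e.\ $x_0$ lying on the boundary rather than the interior of the convex hull) or where the points fail to be affinely independent (so ``interior'' of the convex hull would be empty). One should also double-check the translation/rescaling bookkeeping that converts the positive-spanning coefficients of the normals $-(x_j-x_0)/\|x_j-x_0\|$ into convex-combination coefficients for the $x_j$ themselves; this is elementary but must be done carefully since the normalization by the norms changes the weights. Everything else is routine convex geometry.
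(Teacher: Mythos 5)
Your argument is correct, and it is close in spirit to the paper's but not identical. For part a) the paper argues by contraposition: if $x_0\notin\operatorname{int}(\operatorname{conv}\{x_1,\dots,x_{d+1}\})$, a separating hyperplane through $x_0$ with normal $u$ gives $\langle u,x_j-x_0\rangle\le 0$ for all $j$, so the explicit ray $\{x_0+ru:r\ge0\}$ lies in the cell and the cell is unbounded. That is exactly the recession-cone duality you invoke through the packaged $H$-polyhedron boundedness criterion, just done by hand in the contrapositive direction; the two versions are interchangeable (your version should note that the criterion applies because the cell is nonempty, as it contains $x_0$). For part b) the routes genuinely diverge: the paper applies part a) a second time to the hypothetically bounded cell $N(x_i,\cdot)$, concludes $x_i\in\operatorname{int}(\operatorname{conv}(\{x_0,\dots,x_{d+1}\}\setminus\{x_i\}))$ as well as $x_0\in\operatorname{int}(\operatorname{conv}\{x_1,\dots,x_{d+1}\})$, and derives a combinatorial contradiction with general position (the two convex hulls coincide, forcing either $x_i=x_0$ or both points into the hull of the remaining $d$ points). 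You instead build an explicit recession direction from a supporting hyperplane of the simplex at the vertex $x_i$, checking $\langle v,x_j-x_i\rangle\le0$ for all $j\ne i$ including $j=0$; this is more constructive, avoids the case analysis, and exhibits the unbounded ray directly, at the cost of introducing the positive-spanning machinery. Two cosmetic points to fix in a write-up: your inner/outer normal labels for the half-spaces $H_j$ are swapped (the constraint is $\langle x_j-x_0,y\rangle\le b_j$, so the outer normal is $x_j-x_0$), though the sign washes out when you solve $0=\sum_j\lambda_j(x_j-x_0)$ for $x_0$; and you should record explicitly that $d+1$ positively spanning vectors in $\R^d$ are automatically affinely independent (or cite general position for this), so that "all barycentric coordinates positive" really does place the point in the topological interior of a full-dimensional simplex.
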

\begin{proof}
	Assume that $x_0\notin\operatorname{int}(\operatorname{conv}(\{x_1,\hdots,x_{d+1}\}))$. By the hyperplane separation theorem for convex sets there exists a hyperplane through $x_0$ with a normal vector $u\in\mathbb{R}^d$ such that $\langle u,x_i \rangle \leq \langle u, x_0\rangle $ for all $i\in\{1,\hdots,d+1\}$, where $\langle \cdot , \cdot \rangle$ denotes the standard scalar product on $\R^d$. Define the set $R = \{x_0+r u: r\in[0,\infty)\}$. For any $y\in R$, $x_0$ is the closest point to $y$ out of $\{x_0,\hdots,x_{d+1}\}$, whence $R\subset N(x_0, \sum_{j=0}^{d+1} \delta_{x_j})$ and $N(x_0, \sum_{j=0}^{d+1} \delta_{x_j})$ is unbounded. This gives us a contradiction and, thus, proves part a).

Let $i\in\{1,\hdots,d+1\}$ and assume that  $N(x_i, \sum_{j=0}^{d+1} \delta_{x_j})$  is bounded. It follows from part a) that $x_i\in\operatorname{int}(\operatorname{conv}(\{x_0,\hdots,x_{i-1},x_{i+1},\hdots,x_{d+1}\}))$. On the other hand, again by part a), we have that $x_0\in\operatorname{int}(\operatorname{conv}(\{x_1,\hdots,x_{d+1}\}))$. This implies that 
$$
\operatorname{conv}(\{x_0,\hdots,x_{i-1},x_{i+1},\hdots,x_{d+1}\})
=\operatorname{conv}(\{x_0,\hdots,x_{d+1}\})
=\operatorname{conv}(\{x_1,\hdots, x_{d+1}\}),
$$
and, thus, either $x_i, x_0 \in\operatorname{conv}(\{x_1,\hdots,x_{i-1},x_{i+1},\hdots,x_{d+1}\})$ or $x_i=x_0$. This gives us a contradiction and concludes the proof of part b). 
\end{proof}
Finally, we are in position to prove the main result of this section.
\begin{proof}[Proof of Theorem \ref{main-Thm-circ}]
From Lemma \ref{calc-meas} and \eqref{M()} we deduce that $M_t(B)\rightarrow M(B)$ as $t\to \infty$ for all $B \in\mathcal{I}$. Then, by  Theorem \ref{real} it is sufficient to show  
	\begin{align*}
	&\underset{t\rightarrow\infty}{\lim}\,t \int_{W}\mathbb{E}\Big[\mathbf{1}\{s_t \mu(\mathbf{B}(x , C(x,\eta_{t} +\delta_x)))\in B \}
	\\
	&\hspace{2cm}\times \mathbf{1}\Big\{\sum_{y\in\eta_{t}\cap W} \delta_{s_t \mu(\mathbf{B}(y , C(y,\eta_{t} +\delta_x)))}(B)=m\Big\}\Big] f(x)dx - M(B)\mathbb{P}(\xi_t(B)=m) =0
	\end{align*}
	for $m\in\N_0$ and $B\in\mathcal{I}$. Put $\overline{u} = \sup(B)$ and let $\ell_t = 4\big(\frac{2\overline{u}}{\beta s_t k_d}\big)^{1/d}$. We write
	\begin{align}
	& t \int_{W}\mathbb{E}\Big[\mathbf{1}\big\{s_t \mu(\mathbf{B}(x , C(x,\eta_{t} +\delta_x)))\in B \big\}\mathbf{1}\Big\{ \sum_{y\in\eta_{t}\cap W} \delta_{s_t \mu(\mathbf{B}(y , C(y,\eta_{t} +\delta_x)))}(B)=m\Big\} \Big] f(x)dx 
	\notag
	\\
	& = t \int_{W} \mathbb{E}\Big[\mathbf{1}\big\{s_t \mu(\mathbf{B}(x , C(x,\eta_{t} +\delta_x))) \in B \big\} \mathbf{1}\big\{\eta_t(\mathbf{B}(x,  \ell_t))=d+1\big\}
	\notag
	\\
	& \qquad\qquad\qquad \times \mathbf{1}\Big\{\sum_{y\in\eta_{t}\cap W} \delta_{s_t \mu(\mathbf{B}(y , C(y,\eta_{t} +\delta_x)))}(B)= m \Big\} \Big] f(x)dx 
	\notag
	\\
	&\quad  + t \int_{W}\mathbb{E}\Big[\mathbf{1}\big\{s_t \mu(\mathbf{B}(x , C(x,\eta_{t} +\delta_x)))\in B \big\} \mathbf{1}\big\{\eta_t(\mathbf{B}(x,  \ell_t))>d+1\big\}
	\notag
	\\
	& \qquad\qquad\qquad \times \mathbf{1}\Big\{\sum_{y\in\eta_{t}\cap W} \delta_{s_t \mu(\mathbf{B}(y , C(y,\eta_{t} +\delta_x)))}(B)= m \Big\} \Big] f(x)dx 
	\notag
	\\
	& =:  A_t + R_t .
	\notag
	\end{align}
	By Lemma \ref{calc-meas}, we obtain $R_t \to 0$ as $t \to \infty$. Let us study $A_t$. From Lemma \ref{lem:bnd on g} it follows that there exists $t_0>0$ such that $\overline{u}/s_t\in T$ and $\ell_t \geq 4g(y,\overline{u}/s_t)$ for all $y\in W$ and $t>t_0$. Assume $t>t_0$. In case there are only $d+1$ points of $\eta_t$ in $\mathbf{B}(x,\ell_t)$, we deduce that $s_t \mu(\mathbf{B}(x , C(x,\eta_{t} +\delta_x)))\in B$ only if the $d+1$ points belong to $\mathbf{B}(x,2 g(x, \overline{u}/s_t))$. Then,  by $\ell_t \geq 4g(x,\overline{u}/s_t)$ we obtain 
	\begin{equation}\begin{split}\label{eq: fisrtstepA_t}
	A_t 
	& =  t \int_{W}\,\mathbb{E}\Big[\mathbf{1}\big\{ s_t \mu(\mathbf{B}(x , C(x,\eta_{t} +\delta_x)))\in B\big\}
	\\
	& \hspace{2cm} \times \mathbf{1}\big\{\eta_t(\mathbf{B}(x,\ell_t)\setminus \mathbf{B}(x,\ell_t/2))=0, \eta_t(\mathbf{B}(x,\ell_t/2))=d+1\big\}
	\\
	& \hspace{2cm} \times \mathbf{1}\Big\{\sum_{y\in\eta_{t}\cap W} \delta_{s_t \mu(\mathbf{B}(y , C(y,\eta_{t} +\delta_x)))}(B)=m \Big\} \Big]   f(x) dx . 
	\end{split}\end{equation}
	Furthermore, since  $\ell_t \geq 4g(y,\overline{u}/s_t)$ for all $y\in W$, we have  that
	$$
	 \mathbf{B}(y, 2g(y, \overline{u} /s_t))\cap \mathbf{B}(x,\ell_t/2) = \emptyset,  \quad y\in  \mathbf{B}(x,\ell_t)^c\cap W.
	$$
 Now the observation that
	$$
	s_t \mu(\mathbf{B}(y , C(y,\eta_{t} +\delta_x)))\in B
	\quad \text{if and only if} \quad
	s_t \mu(\mathbf{B}(y , C(y,(\eta_{t}+\delta_x)|_{\mathbf{B}(y,2g(y, \overline{u}/s_t))} )))\in B
	$$ 
	for $y\in\eta_t$ establishes that 
\begin{align*}
	A_t 
	& =  t \int_{W}\,\mathbb{E}\Big[\mathbf{1}\big\{ s_t \mu(\mathbf{B}(x , C(x,\eta_{t} +\delta_x)))\in B\big\}
	\\
	& \hspace{1.5cm} \times \mathbf{1}\big\{\eta_t(\mathbf{B}(x,\ell_t)\setminus \mathbf{B}(x,\ell_t/2))=0, \eta_t(\mathbf{B}(x,\ell_t/2))=d+1\big\}
	\\
	& \hspace{1.5cm} \times \mathbf{1}\Big\{\xi_t(\eta_t|_{\mathbf{B}(x,\ell_t)^c})(B) +\sum_{y\in\eta_{t}\cap \mathbf{B}(x,\ell_t/2) \cap W} \delta_{s_t \mu(\mathbf{B}(y , C(y,\eta_{t} +\delta_x)))}(B)   =m \Big\} \Big]   f(x) dx . 
\end{align*}
	Suppose that  $s_t \mu(\mathbf{B}(x , C(x,\eta_{t} +\delta_x)))\in B$ and that there are exactly $d+1$ points $y_1,\dots , y_{d+1}$ of $\eta_t$ in $\mathbf{B}(x,\ell_t/2)$ and $\eta_t\cap \mathbf{B}(x,\ell_t)\cap \mathbf{B}(x,\ell_t/2)^c=\emptyset$. 
	From Lemma \ref{lemmconvhull} it follows that $x\in\operatorname{int}(\operatorname{conv}(\{y_1,\hdots,y_{d+1}\}))$ and that the Voronoi cells $N(y_i, \eta_t|_{\mathbf{B}(x,\ell_t)} +\delta_x), i=1,\dots,d+1$, are unbounded. In particular, we have
	$$
	C(y_i, \eta_t + \delta_x)> \ell_t/4 > g(y_i, \overline{u}/s_t), \quad i=1,\dots, d+1.
	$$
	Together with the same arguments used to show \eqref{eq: fisrtstepA_t}, this implies that
	\begin{align*}
	A_t 
    & = t \int_{W}\,\mathbb{E}\big[\mathbf{1}\big\{ s_t \mu(\mathbf{B}(x , C(x,\eta_{t} +\delta_x)))\in B\big\} 
		\\
	& \hspace{2cm} \times \mathbf{1}\big\{\eta_t(\mathbf{B}(x,\ell_t)\setminus \mathbf{B}(x,\ell_t/2))=0, \eta_t(\mathbf{B}(x,\ell_t/2))=d+1\big\}
	\\
	& \hspace{2cm} \times \mathbf{1}\big\{ \xi_t(\eta_t|_{\mathbf{B}(x,\ell_t)^c} )(B) = m \big\} \big]  f(x) dx 
	\\
	& = t \int_{W}\,\mathbb{E}\big[\mathbf{1}\big\{ s_t \mu(\mathbf{B}(x , C(x,\eta_{t} +\delta_x)))\in B\big\} \mathbf{1}\big\{\eta_t(\mathbf{B}(x,\ell_t))=d+1\big\}\big] 
	\\
	& \hspace{2cm} \times \mathbb{P}\big( \xi_t(\eta_t|_{\mathbf{B}(x,\ell_t)^c}) (B) = m \big)   f(x) dx  .
	\end{align*}
Furthermore, we obtain
	\begin{align*}
	\big\vert\mathbb{P}\big( \xi_t(\eta_t|_{\mathbf{B}(x,\ell_t)^c} ) (B) = m \big)- \mathbb{P}\big( \xi_t(B) = m \big) \big\vert  
	\leq \mathbb{P}(\eta_t(\mathbf{B}(x,\ell_t))>0)< t \mu(\mathbf{B}(x,\ell_t))
	\end{align*}
	for any $x\in W$, where we used the Markov inequality in the last step. Combining the previous formulas leads to
	\begin{align*}
	& \big| A_t - M(B)\mathbb{P}(\xi_t(B)=m) \big| \\
	& \leq |M_t(B)-M(B)| \\
	& \quad + t \int_{W}\,\mathbb{E}\big[\mathbf{1}\big\{ s_t \mu(\mathbf{B}(x , C(x,\eta_{t} +\delta_x)))\in B\big\} \mathbf{1}\big\{\eta_t(\mathbf{B}(x,\ell_t))>d+1\big\}\big] f(x) dx \\
	& \quad + t \int_{W}\,\mathbb{E}\big[\mathbf{1}\big\{ s_t \mu(\mathbf{B}(x , C(x,\eta_{t} +\delta_x)))\in B\big\}\mathbf{1}\big\{\eta_t(\mathbf{B}(x,\ell_t))=d+1\big\}\big]
	\\
	& \hspace{2cm}
	\times \big| \mathbb{P}\big( \xi_t(\eta_t|_{\mathbf{B}(x,\ell_t)^c}) (B) = m \big) - \mathbb{P}(\xi_t(B)=m) \big|  f(x) dx \\
	& \leq |M_t(B)-M(B)| + t \int_{W}\,  \mathbb{P}\big(\eta_t(\mathbf{B}(x,\ell_t))>d+1\big) f(x) dx + \widehat{M}_t([0,\overline u]) \sup_{x\in W}  t \mu(\mathbf{B}(x,\ell_t)). 
	\end{align*}
	It follows from Lemma \ref{calc-meas} that, as $t\to\infty$, $\widehat{M}_t([0,\overline u])\to M([0,\overline u])$, $M_t(B)\to M(B)$ and the integral on the right-hand side vanishes. Without loss of generality we may assume $\ell_t\leq 1$, and thus the  continuity of $f$ on $W+\overline{\mathbf{B}(0,1)}$  implies that 
	$$
	t \mu(\mathbf{B}(x,\ell_t)) \leq k_d \max_{z\in W + \overline{\mathbf{B}(0,1) }} f(z) t \ell_t^d
	$$
for all $x\in W$.	Now $\ell_t = 4\big(\frac{2 \overline{u}}{\beta s_t k_d}\big)^{1/d}$ and $s_t = \alpha_2 t^{(d+2)/(d+1)}$ yield that the right-hand side vanishes as $t\to\infty$. Thus, we obtain
$$
\lim_{t\to\infty} A_t - M(B)\mathbb{P}(\xi_t(B)=m) = 0,
$$
which together with $R_t\to0$ as $t\to\infty$ concludes the proof.
\end{proof}

\begin{proof}[Proof of Theorem \ref{min circradii}]
Let  $\gamma$ be a Poisson  process on $\R^d\times [0,\infty)$ with the restriction of the Lebesgue measure as intensity measure. Let $\mu_1$ and $\mu_2$ denote the absolutely continuous measures with densities $f_1$ and $f_2$, respectively. 
Then, \cite[Corollary 5.9 and Proposition 6.16]{MR3791470} imply that
$$
\varrho^{(1)}_t 
= \sum_{(x,y)\in \gamma} \mathbf{1} \{ y\leq t f_1(x)\} \delta_{x},
\quad
\varrho^{(2)}_t  = \sum_{(x,y)\in \gamma} \mathbf{1} \{ y\leq t f_2(x)\} \delta_{x}
$$
and
$$
\varrho_t 
= \sum_{(x,y)\in \gamma} \mathbf{1} \{ y\leq t \phi(x)\} \delta_{x}
$$
are Poisson  processes on $\R^d$ with intensity measures $t \mu_1, t\mu_2$ and  $t \vartheta$, respectively. They satisfy
$$
\varrho^{(1)}_t(A) \leq \varrho_t(A) \leq \varrho^{(2)}_t (A) \,\,\, \text{a.s.}
\quad 
\text{ and }
\quad 
\varrho_t  \overset{d}{=} \zeta_t, \qquad A\subset \R^d, \, t>0.
$$
Therefore for any $v\geq0$, we obtain
\begin{align}\label{eq:ineq bnd mu1}
\mathbb{P}\big(\min_{x\in\zeta_t\cap W} \mu_1(\mathbf{B}(x, C(x,\zeta_t))) > v \big)
\leq \mathbb{P}\big(\min_{x\in\varrho_t^{(1)}\cap W} \mu_1(\mathbf{B}(x, C(x,\varrho_t^{(1)}))) >v \big)
\end{align}
and similarly 
\begin{align}\label{eq:ineq bnd mu2}
\mathbb{P}\big(\min_{x\in\zeta_t\cap W} \mu_2(\mathbf{B}(x, C(x,\zeta_t))) > v \big)
\geq \mathbb{P}\big(\min_{x\in\varrho_t^{(2)}\cap W} \mu_2(\mathbf{B}(x, C(x,\varrho_t^{(2)}))) > v \big).
\end{align}
From Theorem \ref{main-Thm-circ}, it follows for $j=1,2$ and $\nu(t)= u(\alpha_2t^{(d+2)/(d+1)})^{-1}$ with  $u\geq 0$, that
$$
\lim_{t\rightarrow\infty} \mathbb{P}\big(\min_{x\in\varrho_t^{(j)}\cap W} \mu_j(\mathbf{B}(x, C(x,\varrho_t^{(j)}))) >\nu(t) \big) = e^{-\mu_j(W)u^{d+1}}.
$$
If $s\phi \leq f_1 \leq \phi$ for some $s\in (0,1]$, combining \eqref{eq:ineq bnd mu1}, the previous limit with $j=1$,  and the inequality
$$
\mathbb{P}\big(\min_{x\in\zeta_t\cap W} s\vartheta(\mathbf{B}(x, C(x,\zeta_t))) >\nu(t) \big)\leq \mathbb{P}\big(\min_{x\in\zeta_t\cap W} \mu_1(\mathbf{B}(x, C(x,\zeta_t))) >\nu(t) \big)
$$
implies that 
$$
\limsup_{t\rightarrow\infty} \mathbb{P}\big(\min_{x\in\zeta_t\cap W} s \vartheta(\mathbf{B}(x, C(x,\zeta_t))) >\nu(t) \big) \leq  e^{-\mu_1(W)u^{d+1}}.
$$
Then, $s\vartheta(W)\leq \mu_1(W)$  concludes the proof of  $(i)$. Analogously, if $\phi \leq f_2 \leq r \phi $ for some $r\geq 1$, combining \eqref{eq:ineq bnd mu2}, the limit above with $j=2$, the inequality
$$
\mathbb{P}\big(\min_{x\in\zeta_t\cap W} r \vartheta(\mathbf{B}(x, C(x,\zeta_t)))>\nu(t) \big)\geq \mathbb{P}\big(\min_{x\in\zeta_t\cap W} \mu_2(\mathbf{B}(x, C(x,\zeta_t))) >\nu(t) \big)
$$
and $ \mu_2(W)\leq r\vartheta(W)$ for $u\geq 0$ shows  $(ii)$.
\end{proof}

\bibstyle{abbrv}

\bibliography{bibliography}

\end{document}